\newcommand{\epc}{\hspace{1pc}}
\newcommand{\E}{{\rm I\!E}}
\newcommand{\wt}{\widetilde}
\newcommand{\wh}{\widehat}
\newcommand{\gap}{\vspace{0.0in}}
\newtheorem{example}{Example}
\DeclareFontFamily{U}{mathx}{\hyphenchar\font45}
\DeclareFontShape{U}{mathx}{m}{n}{
      <5> <6> <7> <8> <9> <10>
      <10.95> <12> <14.4> <17.28> <20.74> <24.88>
      mathx10
      }{}
\DeclareSymbolFont{mathx}{U}{mathx}{m}{n}
\DeclareMathAccent{\widecheck}{0}{mathx}{"71}
\DeclareMathAccent{\wideparen}{0}{mathx}{"75}
\title{
Composite Difference-Max
Programs for Modern Statistical Estimation Problems}
\author{Ying Cui \thanks{Department of Industrial and
Systems Engineering, University of Southern California, Los Angeles, CA 90089
(\email{yingcui@usc.edu; jongship@usc.edu}).  The work of these two authors was based on research partially
supported by the U.S.\ National Science Foundation grant IIS--1632971.}
\and Jong-Shi Pang \footnotemark[1]
\and Bodhisattva Sen \thanks{Department of Statistics,
Columbia University, New York, NY 10027 (\email{bodhi@stat.columbia.edu}).
The work of this author was based on research partially
supported by the U.S.\ National Science Foundation grant DMS--1712822.}}
\begin{document}
\maketitle

\begin{abstract}
Many modern statistical estimation problems are defined by three major components: a statistical model that postulates the dependence of an output variable on the input features; a loss function measuring the error between the observed output and the model predicted output; and a regularizer that controls the overfitting and/or variable selection in the model. We study the sampling version of this generic statistical estimation problem where the model parameters are estimated by empirical risk minimization, which involves
the minimization of the empirical average of the loss function at the data points weighted by the model regularizer. In our setup we allow all three component functions discussed above to be of the difference-of-convex (dc) type and illustrate them with a host of commonly used examples, including those in continuous piecewise affine regression and in deep learning (where the activation functions are piecewise affine).
We describe a nonmonotone majorization-minimization (MM) algorithm for solving the unified
nonconvex, nondifferentiable optimization problem which is formulated as a specially structured
composite dc program of the pointwise max type, and present convergence results to a directional stationary
solution.  An efficient semismooth Newton method is proposed to solve the dual of the MM subproblems.
Numerical results are presented to demonstrate the effectiveness of the proposed algorithm and the superiority of continuous piecewise affine regression over the standard linear model.
\end{abstract}

\begin{keywords}
Continuous piecewise affine regression, nonconvex optimization, nondifferentiable objective, ReLu activation function, semismooth
Newton method
\end{keywords}
\begin{AMS}
62J02, 
90C26, 
49J52 
\end{AMS}

\section{Introduction}

Many modern statistical estimation problems are defined by three major components:
a statistical model that postulates the dependence of an output quantity on the input features; a loss function measuring the error between the observed output and model predicted output;
and a regularizer that controls the overfitting and/or variable selection in the model.
The overall estimation problem is to determine certain unknown parameters in the statistical
model. 
In practical computation, samples on the underlying covariates (inputs) and output are available
and a data-based empirical objective combined with a regularizer is formulated as a minimization
problem that constitutes the workhorse of the
estimation process.  This paper addresses the computational solution of the latter optimization
problem which is challenged by its nonconvexity and nondifferentiability.    These features
immediately raise the question about the stationarity, let alone minimizing, properties of
the computed solution by an iterative algorithm.  Our goal is to compute a directional stationary
solution \cite{PangRazaviyaynAlvarado16}, which is the sharpest kind among all stationary solutions.

Traditionally, a statistical model is usually described by a linear combination of the input features,
resulting in a linear regression or classification model.  Nevertheless, in recent years,
research into the use of piecewise affine, convex \cite{HannahDunson11,HannahDunson13,MCISen15}
and nonconvex \cite{HahnBanergjeeSen16} estimation functions has been on the rise.  There are other
piecewise affine/quadratic functions that arise from different applications, such as those using piecewise affine functions to approximate the signum function and multi-layer neural networks in deep learning~\cite[Chapter~4]{YuDeng15}.  These
nontraditional, nondifferentiable estimation functions have provided an important impetus for our work.

Some classical convex loss measures
are based on the absolute-value or squared errors between the model predicted outputs and the
observed outputs, leading to a $\ell_1$ 
or $\ell_2$ loss function for regression.  Other loss functions include a logarithmic loss for logistic regression,
and an exponential loss for boosting.  In the area of support vector machines, a hinge loss is commonly
used for binary classification.
In recent years, a truncated hinge loss function has been proposed to reduce the effect of outliers
\cite{ZhangPhamFuLiu17}.  A distinguished property of the latter loss function is that it is nonconvex and
nondifferentiable.  All these loss measures are composed with the statistical model
involving the parameters to be estimated, leading to an overall composite loss function to be minimized, which is nonconvex
and nondifferentiable, due either to a piecewise statistical model or a loss function that lacks convexity and/or differentiability.
In the area of support vector machines and other applications, a norm function of
the model variables is often used as the model regularizer to avoid overfitting.   In recent years,
sparsity constraints to avoid model overfitting \cite{HastieTibshiraniWainwright15} is an important consideration in
high-dimensional statistical learning.
Starting with the pioneering work of Fan and Li \cite{FanLi01}, various sparsity functions have been proposed as surrogates of the discontinuous
counting step function; it was shown in \cite{AhnPangXin17,LeThiPhamVo15} that all these surrogate sparsity functions can be expressed as
the difference of two convex functions of a particular type.

In this paper, we introduce a composite difference-of-convex-piecewise program of the pointwise max type
as a unification of a host of statistical models, loss functions, and model regularizers.  A highlight
of this formulation is the emphasis on the separate roles of each component function that is
key to the development of a nonmonotone majorization-minimization (MM) algorithm for solving the overall nonconvex,
nondifferentiable optimization problem.
While global optima of nonconvex problems cannot be computed,
stationary solutions of various kinds are computable by iterative algorithms with guaranteed convergence.
What is essential is that focus should be placed on computing sharp stationary solutions which distinguish
themselves as being the ones that must satisfy all other relaxed definitions of stationarity.
With guaranteed subsequential convergence, the developed MM algorithm aims to compute
such a stationary point that is based on the elementary notion of directional derivatives of the objective function.
By using the Kurdyka-{\L}ojaziewicz (KL) theory of semi-analytic
functions \cite{AttouchBolte2009,AttouchBolteSvaiter2013,BoltePauwels2016}, we also show the sequential convergence
of the iterates produced by the algorithm.  In order to handle the nondifferentiable pointwise max terms,
auxiliary variables are employed to express these pointwise maximum functions as constraints.  With the regularization
of the added variables in defining the subproblems, the resulting MM algorithm no longer guarantees the monotone property
of the original objective; this leads to our terminology of ``nonmonotone MM algorithm''.  Due to the modifications
which are introduced to facilitate the fast and efficient solution of the subproblems in the iterative steps,
a separate proof of convergence is needed for the algorithm.   To ensure the rapid convergence and high-accuracy required of
the solution of the subproblems, we employ the semismooth Newton
method (SN) \cite{QiSun1993, PangQi1995} for semismoothly differentiable (SC$^1$)
functions applied to the dual of such subproblems.
We present numerical results to demonstrate the effectiveness of the overall algorithm and the superiority
of a nontraditional continuous piecewise affine regression model over the traditional linear model using least-squares regression.
In addition to this specific problem, the nonmonotone MM algorithm combined with the SN method (abbreviated as MM+SN)
developed in this paper has wide applications
to many related problems that include the multi-layer neural network problems with piecewise activation functions
in deep learning.  Due to page limit, we will report the details of the other applications in separate papers.

The major contributions of this paper are fourfold: (a) we identify a unified composite difference-max program for a host of
modern statistical estimation problems and illustrate the formulation with a variety of special instances; (b) we develop a
majorization-minimization based algorithm for computing, for the first time, a directional stationary solution of such a
nonconvex, nonsmooth program; 
(c) we present a semismooth Newton method for solving the MM subproblems and illustrate its effectiveness when
applied to a piecewise affine regression problem; and (d) we report computational results on the latter application that
demonstrate the superiority of this nontraditional statistical estimation approach over traditional linear regression.


\section{A Unified Composite Difference-Convex-Piecewise Program}

In the definition of a statistical estimation problem, we place particular emphasis and care
on the mathematical properties of its defining functions; identifying these properties is needed to
effectively deal with the joint feature of nonconvexity and nondifferentiability and to facilitate the design
of the MM+SN algorithm and its analysis for solving the resulting optimization problem.
Specifically, the optimization problem consists of the following four major components, each playing a separate
role in the overall statistical estimation process.  As such, distinguishing them offers flexibility to accommodate
a variety of model specifications.

$\bullet $ A parametric statistical model: $y = \psi(x; \theta) + \varepsilon$, where $y$ is the response (output) variable
(taken to be a scalar) given the input $x \in  \mathbb{R}^d$, $\varepsilon$ is the unobserved random error assumed to have (conditional) mean zero, and $\theta \in  \mathbb{R}^m$ is the model parameter to be estimated,
for some positive integers $d$ (number of input features) and $m$ (number of parameters). In general, these two dimensions, $d$ and $m$, may be different (see e.g.\ (\ref{eq:d-affine})); however, they may be the same as in the case of linear regression.

$\bullet $ An output-dependent loss function $\varphi(y, \bullet)$, whose composition with the statistical model
$\psi(x; \theta)$ leads to the composite function $(\theta;y,x) \mapsto \varphi(y,\psi(x;\theta))$
that provides a measure of the deviation between the model predicted output $\psi(x;\theta)$ and the observed response
$y$; we stress the importance of this composition as properties of $\varphi$ and $\psi$ may be very
different and need to be understood separately for best results.

$\bullet $ A regularizing function $P(\theta)$ that is used either for the purpose of strongly convexifying the
objective function or in high-dimensional problems to control the selection of the model parameter $\theta$;
the latter control is particularly important to avoid overfitting when the number of features $d$ is
relatively large.

$\bullet $ An admissible set $\Theta \subseteq \mathbb{R}^m$ that further restricts the parameter space in the presence of domain knowledge on the parameter $\theta$; often this set $\Theta$ is the whole space $\mathbb{R}^m$, leading to an unconstrained selection of the parameter.

Together, the tuple $(\varphi, \psi, \Theta)$ defines the following constrained minimization problem that is central to many statistical estimation problems:
\begin{equation}\label{eq: model:origional}
\displaystyle{
\operatornamewithlimits{\mbox{minimize}}_{\theta \, \in \, \Theta}
} \ \E \left[ \, \varphi(y,\psi(x;\theta)) \, \right],
\end{equation}
where the expectation $\mathbb{E}$ is taken over the joint distribution of the input $x$ and the output $y$.
The objective expresses the average loss of accuracy in the model output versus the realized output taking into account the
uncertainties in the random input-output pair $(x, y)$. This perspective of statistical estimation is in line with stochastic
programming and is a departure from classical statistical estimation where there is always a postulate of a ``ground truth'' of the estimator.
In this paper, we investigate the numerical solution of problem \eqref{eq: model:origional} via the application of the
sample average approximation (SAA) scheme.  Specifically, taking $N$ independent and identically distributed samples
$\{(x^s,y_s)\}_{s=1}^N \subseteq \mathbb{R}^{d+1}$ and adding the regularizer $P(\theta)$ weighted by the scalar $\gamma_N \geq 0$,
we consider the  empirical optimization problem:
\begin{equation}\label{eq: model: SAA}
\displaystyle{
\operatornamewithlimits{\mbox{minimize}}_{\theta \, \in \, \Theta}
} \ \displaystyle{
\frac{1}{N}
} \, \displaystyle{
\sum_{s=1}^N
} \,
\left[ \, \varphi(y_s,\psi(x^s;\theta)) \, \right] + \gamma_N \, P(\theta).
\end{equation}
While this formulation \eqref{eq: model: SAA} is rather classical, the detailed treatment of the distinguished properties of the
functions $(\varphi,\psi,P)$ constitutes the novelty and intellectual merits of the present paper.
In what follows, we present some details of these functions and describe how they arise.  An important point of the discussion is to
motivate a focused formulation of problem (\ref{eq: model: SAA}) that allows a unified treatment of these statistical estimation
problems as a composite diff-max program by a common algorithmic procedure with guaranteed convergence properties; see
formulation (\ref{eq:optimization model}) and its setting (assumptions {\bf C1}-{\bf C3}) as well as developments in the subsequent
sections. \newline\vskip -0.7em

\noindent
\underline{{\bf Statistical models.}}
We are  interested in piecewise smooth models, including:

$\bullet $ A continuous piecewise affine parametric model: this is a (generally nonconvex) piecewise affine function expressed as
the difference of two convex piecewise affine functions \cite{Scholtes02} each with its own parameters: for two positive integers $k_a$ and $k_b$,
\begin{equation}\label{eq:d-affine}
\psi(x;\theta) \, \triangleq \, \displaystyle{
\max_{1 \leq i \leq k_a}
} \, \left\{ \, ( \, a^i \, )^T x + \alpha_i \, \right\} - \displaystyle{
\max_{1 \leq i \leq k_b}
} \, \left\{ \, ( \, b^i \, )^T x + \beta_i \, \right\},
\end{equation}
with parameter $\theta \,\triangleq \,\left\{ \left( a^i, \alpha_i \right)_{i=1}^{k_a}, \left( b^i, \beta_i \right)_{i=1}^{k_b} \right\}
\in \mathbb{R}^{(k_a + k_b)(d + 1)}$.
The convex case $k_b = 1$ was studied in \cite{HannahDunson11,HannahDunson13,MCISen15}; study of the nonconvex case (where $k_b > 1$)
can be found in the unpublished manuscript \cite{HahnBanergjeeSen16} and also in \cite{Bagirov10} with a max-min representation of the piecewise affine function. Model \eqref{eq:d-affine} includes the 1-layer neural network
by the rectified linear unit (ReLu) activation function \cite{NairHinton2010, GlorotBordesBengio2011} that has the simple
form $\psi(x;\theta) = \max(a^T x + \alpha, 0)$ where the parameter $\theta = (a, \alpha) \in \mathbb{R}^{d+1}$.

$\bullet $ A multi-layer neural network with the ReLu activation function: for simplicity, we present a 2-layer model in deep learning \cite{YuDeng15},
\begin{equation}\label{defn: 2-layer ReLu}
\psi(x;\theta) \, \triangleq \, \max\left( \, b^T \max\left( Ax + a, 0 \right) + \beta, \, 0 \right),
\end{equation}
where the parameter $\theta$ consists of the vectors $b$ and $a$ in $\mathbb{R}^k$, the matrix $A \in \mathbb{R}^{k \times d}$,
and scalar $\beta \in \mathbb{R}$.  The two occurrences of the max ReLu functions indicate the action of $2$ hidden layers,
where the max of $Ax + a$ and $0$ is taken along each coordinate.
Omitting the proof, we can show that this function can be written in the following difference-of-max form:
\begin{equation} \label{eq:dcp}
\psi(x;\theta) \, = \, \displaystyle{
\max_{1 \leq j \leq k_1}
} \, \psi_{1,j}(x;\theta) - \displaystyle{
\max_{1 \leq j \leq k_2}
} \, \psi_{2,j}(x;\theta),
\end{equation}
for some positive integers $k_1$ and $k_2$ and convex functions $\psi_{i,j}(x;\bullet)$ that are all once
but not twice continuously differentiable piecewise linear-quadratic.  (A continuous function $\psi$ is {\sl piecewise linear-quadratic} (PLQ) \cite[Definition 10.20]{RockafellarRWets98}
on a domain if the domain
can be represented as the union of finitely many polyhedral sets 
on each of which  $\psi$ is a quadratic function;
see \cite{RockafellarRWets98, Sun86, Sun92} for properties of piecewise functions of this type.)
The above difference-of-convex pointwise maximum 
representation (in short, difference-max representation)
of $\psi(x;\theta)$ has several advantages over the original definition (\ref{defn: 2-layer ReLu}).
Extending the piecewise affine model (\ref{eq:d-affine}) to a piecewise quadratic model, the formulation (\ref{eq:dcp})
motivates a
unified form of the function $\psi$ in these two models.  More importantly, one can take advantage of
the piecewise linear-quadratic components for the computation of directional
stationary solutions (to be defined later).  Furthermore, while it is clear
from (\ref{defn: 2-layer ReLu}) that $\psi(x;\bullet)$ is a piecewise quadratic function,
definition (\ref{defn: 2-layer ReLu}) does not immediately reveal the
linear-quadratic feature of this function.

We make two important remarks.  One, it is possible to extend the above 2-layer treatment in two major directions: (i) to multi-layers
and (ii) to general piecewise affine activation functions of which the ReLu function is a special case.  Due to their significance,
the full treatment of these deep-learning problems
is presented in a separate paper.  Two, in the algorithmic development, we will
take each $\psi_{i,j}(x;\theta)$ in (\ref{eq:dcp}) as a once continuously differentiable convex function. \newline\vskip -0.7em

%

\noindent
\underline{{\bf Loss functions.}}
These include both differentiable and piecewise affine (thus nondifferentiable) functions. Of particular interest are the following convex loss functions:
the classical quadratic function for least-squares regression; the Huber loss function in robust estimation; the quantile function \cite{KoenkerBassett1978}
that includes the absolute deviation loss function; the one-parameter exponential family via log-likelihood maximization \cite{BickelDoksum2006,Brown1986}.
We are also interested in the nonconvex yet piecewise affine truncated hinge loss function for binary and multicategory classification \cite{ZhangPhamFuLiu17}
with the form  $$\varphi(y,t) \,\triangleq \, \max( 1-(t-y), 0 ) - \max( \delta-(t-y), 0 )\epc \mbox{for some parameter $\delta \leq 0$}.$$
With the proof omitted, we can show that a composition of this function with the
diff-max function (\ref{eq:dcp}) is also a function of the same kind.
Therefore, the composite function $\varphi(y,\psi(x;\theta))$ is itself a difference-max function, thus amenable to
treatment by the methodology developed in the later sections. \newline\vskip -0.7em

\noindent
\underline{{\bf Regularizers.}}  Traditionally, strongly convex regularizers 
are very prominent; in sparse linear regression,
convex and difference-of-convex regularizers are becoming popular as they are employed as surrogate sparsity functions.
Most of the latter regularizers are not differentiable.
Frequently used convex regularizers include the $\ell_2$-norm used in support vector machines \cite{CortesVapnik95}, the weighted $\ell_1$ in sparsity representation \cite{HastieTibshiraniWainwright15} and the total variation norm in image processing \cite{TibshiraniSaunders2005}.
Examples of dc surrogate sparsity functions include the smoothly clipped
absolute deviation (SCAD) function \cite{FanLi01},
the minimax concave penalty  function \cite{Zhang10}, the truncated transformed $\ell_1$ \cite{YLHX2015, DongAhnPang2017}, the truncated logarithmic penalty
\cite{CandesWakinBoyd2008,DongAhnPang2017}. It can be shown that all of the above mentioned dc regularizers can be written in the unified form
$$P(\theta) = \displaystyle{
\sum_{i=1}^m
} \, c_i \, | \, \theta_i \, | - \displaystyle\sum_{i=1}^m p_i(\theta_i)$$ with each $p_i$ being a univariate differentiable convex function, see e.g., \cite{AhnPangXin17}. \newline\vskip -0.7em

Putting together the above families of functions $\varphi(y,\bullet)$, $\psi(x;\bullet)$ and $P(\bullet)$,
we arrive at the following detailed formulation of problem \eqref{eq: model: SAA}
\begin{equation}\label{eq:optimization model}
\displaystyle{
\operatornamewithlimits{\mbox{minimize}}_{\theta \, \in \, \Theta}
} \epc  f_N(\theta)\,\triangleq \,\displaystyle{
\frac{1}{N}
} \, \displaystyle{
\sum_{s=1}^N
} \, \varphi_s \circ \psi_s(\theta) + \gamma_N\,[\,P_1(\theta) - P_2(\theta)\,]
\end{equation}
with the composite diff-max structure summarized below:\\
{\bf C1:}  each $\varphi_s$ is a univariate convex function;\\
{\bf C2:} each $\psi_s$ is a difference-of-convex function of the pointwise max type (\ref{eq:dcp});
specifically, for some nonnegative integers $k_{s;1}$ and $k_{s;2}$,
\begin{equation}\label{dpc}
\psi_s(\theta) \,\triangleq\, \displaystyle{
\max_{1 \leq i \leq k_{s;1}}
} \, \psi_{s;1,i}(\theta) - \displaystyle{
\max_{1 \leq i \leq k_{s;2}}
} \,\psi_{s;2,i}(\theta)
\end{equation}
with each $\psi_{s;1,i}(\theta)$ and $\psi_{s;2,i}(\theta)$ being convex and continuously differentiable;\\
{\bf C3:} both $P_1$ and $P_2$ are convex with $P_2$ being additionally the pointwise maximum of finitely many convex differentiable functions.  \hfill $\Box$   \newline\vskip -0.7em

In this setting, the overall objective function $f_N$ is not convex.
Since the composition of a convex function with a dc function is of the dc type,
by \cite[Theorem II, page 708]{Hartman1959}, it follows that $f_N$ is a dc function; thus the difference-of-convex
algorithm (DCA) in dc programming \cite{LeThiPham05,LeHuynhPham09} is in principle applicable to compute a critical point
of $f_N$ on the feasible set $\Theta$. In general, for a dc program:
$\displaystyle\operatornamewithlimits{\mbox{minimize}}_{z\in Z} \;[\,g(z)-h(z)\,]$ where $Z$ is a closed convex set in $\mathbb{R}^n$
and $g$ and $h$ are convex functions, a vector $\bar{z}\in Z$ is a critical point if
$\partial h(\bar{z}) \, \cap \, [ \, \partial g(\bar{z})+ \mathcal{N}(\bar{z}; \, Z)\, ] \neq  \emptyset$,
where the notation $\partial \varphi(\bar{z})$ denotes the subdifferential of a convex function $\varphi$ at a given vector $\bar{z}$ and $\mathcal{N}(\bar{z};\, Z)$
denotes the normal cone of the set $Z$ at $\bar{z}\in Z$. Nevertheless, this criticality concept has several major drawbacks.
One, it depends on the dc representation of the objective function $f_N$. Two, although $f_N$ is known to be dc, a
dc decomposition is not readily available when each of the composite functions $\varphi_s \circ \psi_s$ is derived from
the statistical functions given above.  Third, as will be shown in the next section, criticality is a very weak property
and can have no bearings at all with a desirable minimizing property. For these reasons, our research goal is to seek an alternative definition of stationarity
that is the ``sharpest'' of its kind and which is applicable to  problem~\eqref{eq:dcp} without demanding a dc decomposition of the composite  functions
$\varphi_s \circ \psi_s$.
One shall see from the subsequent discussion that the particular (difference-of) pointwise maximum structures of $\psi_s$ and $P_2$ are critical
to achieve this goal.

\section{A Detour: Subdifferentials and Stationarity}  \label{sec: preliminary}

As a first step in studying the composite nonconvex, nondifferentiable problem (\ref{eq:optimization model}),
we take a detour to present some results from variational analysis; 
see \cite{RockafellarRWets98}
for details.  These materials will prepare for
the introduction of two key notions of stationary solutions that provide the objects of convergence of the iterative
algorithms for solving problem~\eqref{eq:optimization model}, to be presented in the following sections.
Let $\Phi:\Omega\to \mathbb{R}^m$ be a locally Lipschitz continuous vector-valued function defined on an open set $\Omega \subseteq \mathbb{R}^n$.
It follows that $\Phi$ is F(r{\'e}chet)-differentiable almost everywhere on $\Omega$ (c.f.~\cite[Theorem 9.60]{RockafellarRWets98}).
Denote by $\mathcal{D}_{\Phi} \subseteq \Omega$ the set of points where $\Phi$ is F-differentiable and by $J\Phi(x)\in \mathbb{R}^{m\times n}$
the Jacobian of $\Phi$ at $x\in \mathcal{D}_{\Phi}$.  Let $\bar{x} \in \Omega$ be given.
The B(ouligand)-subdifferential of $\Phi$ at $\bar{x}$ is denoted by
\[
\partial_B \Phi(\bar{x}) \triangleq  \left\{ \, V \, \in \, \mathbb{R}^{m\times n} \, \bigg| \; \, \exists \,
\{x^k\}\,\subseteq\, \mathcal{D}_\Phi \;{\rm with}\; \displaystyle{
\lim_{k \to \infty}
} \, x^k \, = \, \bar{x} \mbox{ and } \displaystyle{
\lim_{k \to \infty}
} \, J\Phi(x^k) \, = \,  V \, \right\}.
\] 
The Clarke subdifferential (also called the Clarke generalized Jacobian) of $\Phi$ at $\bar{x}$ is defined as $
\partial_C \Phi(\bar{x}) \,\triangleq \,  \mbox{conv}\left(\,\partial_B\,\Phi(\bar{x})\,\right)$,
where ``conv'' stands for the convex hull of a given set.  For a real-valued function $\phi : \Omega \to \mathbb{R}$,
the Clarke subdifferential of $\phi$ at $\bar{x}$ can also be characterized by
 \[
\partial_C \phi(\bar{x})  \,= \, \left\{ \, v \, \in \, \mathbb{R}^n \; \bigg| \;
\limsup_{x\to\bar{x},\; t\downarrow 0}  \; \frac{\phi(x+tw) - \phi(x)-t\,v^Tw}{t} \, \geq \, 0, \quad \forall \; w \, \in \, \mathbb{R}^n \, \right\}.
\]
The regular subdifferential of $\phi$ at $\bar{x}$ is defined as
\[
\wh{\partial} \phi(\bar{x}) \, \triangleq \, \left\{ \, v \, \in \mathbb{R}^n \; \bigg| \;
\liminf_{\bar{x} \neq x\to\bar{x}} \; \frac{\phi(x) - \phi(\bar{x})-v^T(x-\bar{x})}{\| \, x-\bar{x} \, \|} \, \geq \, 0 \, \right\}.
\]
The limiting subdifferential of $\phi$ at $\bar{x}$ is defined as
\[
\partial \phi(\bar{x}) \, \triangleq \, \left\{ \, v \, \in \, \mathbb{R}^n \; \bigg| \; \exists \, \{ x^k \} \to \bar{x}
\mbox{ and } \{ v^k \} \to v \mbox{ such that } v^k \, \in \, \wh{\partial} \, \phi(x^k) \mbox{ for all $k$ } \, \right\}.
\] 
When $\phi$ is convex, its Clarke subdifferential, regular subdifferential and limiting subdifferential
coincide with the set of all subgradients of $\phi$.  In general, the relationship between the above four
definitions is demonstrated in Figure \ref{figure:subdiff}.  The detailed explanations are given in the proposition below.

\begin{proposition}\label{prop: subdifferentials}
Let $\Omega$ be an open set in $\mathbb{R}^n$.
For any locally Lipschitz continuous function $\phi : \Omega\to \mathbb{R}$ and any $\bar{x}\in \Omega$, it holds that:\\
(i) $\left(\,\partial_B \phi(\bar{x}) \, \cup \, \wh{\partial} \phi(\bar{x})\,\right)
\,\subseteq \, \partial \phi(\bar{x})\,\subseteq\, \partial_C \phi(\bar{x})$;\\
%
%
(ii) ${\rm conv}\, \left(\,\wh{\partial} \phi(\bar{x})\,\right) \, \subseteq \, {\rm conv} \, \left( \, \partial_B \phi(\bar{x}) \,\right)
\, = \, {\rm conv}\, \left( \,\partial \phi(\bar{x})\, \right) \, = \, \partial_C \phi(\bar{x})$.\\
Moreover, counter-examples exist for the omitted inclusions.
\end{proposition}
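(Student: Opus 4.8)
The plan is to establish the three set inclusions in (i) directly from the definitions, import two standard facts about the Clarke subdifferential to handle the one inclusion that is not a mere definition chase, and then obtain the convex-hull identities in (ii) by a squeezing argument, exploiting that the Clarke subdifferential is here \emph{defined} by $\partial_C \phi(\bar{x}) = \mathrm{conv}\left(\partial_B \phi(\bar{x})\right)$.

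For part (i) I would argue the three inclusions separately. The inclusion $\wh{\partial}\phi(\bar{x}) \subseteq \partial\phi(\bar{x})$ is immediate from the definition of the limiting subdifferential: given $v \in \wh{\partial}\phi(\bar{x})$, take the constant sequences $x^k \equiv \bar{x}$ and $v^k \equiv v$. For $\partial_B \phi(\bar{x}) \subseteq \partial\phi(\bar{x})$, I would use that at any F-differentiability point $x \in \mathcal{D}_\phi$ one has $\wh{\partial}\phi(x) = \{J\phi(x)\}$; hence any $V = \lim_k J\phi(x^k)$ with $x^k \to \bar{x}$ is a limit of regular subgradients and therefore lies in $\partial\phi(\bar{x})$. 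The remaining inclusion $\partial\phi(\bar{x}) \subseteq \partial_C \phi(\bar{x})$ is the substantive one.

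The two facts I would import are: (a) the pointwise inclusion $\wh{\partial}\phi(x) \subseteq \partial_C \phi(x)$ valid at every $x$, which holds because a regular subgradient $v$ satisfies $v^T w \le \phi^\circ(x;w)$ for every direction $w$, where $\phi^\circ(x;\cdot)$ is the Clarke directional derivative whose support set is exactly $\partial_C \phi(x)$; and (b) the outer semicontinuity (closed graph) and local boundedness of the set-valued map $x \mapsto \partial_C \phi(x)$ for locally Lipschitz $\phi$. Granting these, $\partial\phi(\bar{x}) \subseteq \partial_C \phi(\bar{x})$ follows: any $v \in \partial\phi(\bar{x})$ is $v = \lim_k v^k$ with $v^k \in \wh{\partial}\phi(x^k) \subseteq \partial_C \phi(x^k)$ and $x^k \to \bar{x}$, so the closed-graph property forces $v \in \partial_C \phi(\bar{x})$. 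I expect this step---reconciling the limiting construction with the Clarke set through outer semicontinuity---to be the main obstacle, since it is the only place where a nontrivial variational-analytic property, rather than a definition chase, is invoked.

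For part (ii), everything reduces to the inclusions of part (i) together with the defining identity $\partial_C \phi(\bar{x}) = \mathrm{conv}\left(\partial_B \phi(\bar{x})\right)$. From $\wh{\partial}\phi \subseteq \partial\phi \subseteq \partial_C\phi = \mathrm{conv}(\partial_B\phi)$ I would take convex hulls to get $\mathrm{conv}(\wh{\partial}\phi) \subseteq \mathrm{conv}(\partial_B\phi)$; and from $\partial_B\phi \subseteq \partial\phi \subseteq \partial_C\phi$, taking convex hulls and using that $\partial_C\phi$ is already convex, the squeeze $\mathrm{conv}(\partial_B\phi) \subseteq \mathrm{conv}(\partial\phi) \subseteq \partial_C\phi = \mathrm{conv}(\partial_B\phi)$ yields $\mathrm{conv}(\partial_B\phi) = \mathrm{conv}(\partial\phi) = \partial_C\phi$. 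Finally, to show the omitted inclusions can be strict I would exhibit elementary examples: for $\phi(x) = -|x|$ one has $\wh{\partial}\phi(0) = \emptyset$, $\partial_B\phi(0) = \partial\phi(0) = \{-1,1\}$, and $\partial_C\phi(0) = [-1,1]$, which simultaneously shows $\partial\phi(0) \subsetneq \partial_C\phi(0)$ and $\mathrm{conv}(\wh{\partial}\phi(0)) \subsetneq \mathrm{conv}(\partial_B\phi(0))$; and for the separable $\phi(x,y) = |x| - |y|$ on $\mathbb{R}^2$ the product rule gives $\partial\phi(0) = [-1,1] \times \{-1,1\}$ while $\partial_B\phi(0) \cup \wh{\partial}\phi(0)$ is only the four corners $\{\pm 1\} \times \{\pm 1\}$, so the first inclusion in (i) is strict.
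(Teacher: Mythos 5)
Your proof is correct, but it routes the one substantive step differently from the paper. The paper disposes of everything with two citations: $\wh{\partial}\phi(\bar{x})\subseteq\partial\phi(\bar{x})$ from Rockafellar--Wets, and Mordukhovich's identity ${\rm conv}(\partial\phi(\bar{x}))=\partial_C\phi(\bar{x})$, the latter of which simultaneously yields the inclusion $\partial\phi(\bar{x})\subseteq\partial_C\phi(\bar{x})$ in (i) and the convex-hull equalities in (ii); the only thing it proves by hand is $\partial_B\phi(\bar{x})\subseteq\partial\phi(\bar{x})$, exactly as you do. You instead avoid citing the Mordukhovich identity: you prove $\partial\phi(\bar{x})\subseteq\partial_C\phi(\bar{x})$ by combining the pointwise inclusion $\wh{\partial}\phi(x)\subseteq\partial_C\phi(x)$ (via the support-function characterization $v^Tw\le\phi^{\circ}(x;w)$) with the outer semicontinuity of $x\mapsto\partial_C\phi(x)$, and then you \emph{recover} ${\rm conv}(\partial\phi(\bar{x}))=\partial_C\phi(\bar{x})$ as a corollary of the squeeze ${\rm conv}(\partial_B\phi)\subseteq{\rm conv}(\partial\phi)\subseteq\partial_C\phi={\rm conv}(\partial_B\phi)$, where the last equality is just the paper's definition of $\partial_C$. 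This is a fair trade: you replace one imported theorem by another standard one (closedness and local boundedness of the Clarke map, Clarke's Proposition 2.1.5), and in exchange the convex-hull identity in (ii) becomes a derived fact rather than a quoted one. Your counter-examples also check out and are slightly stronger than the paper's: the paper's $\phi(x)=|x|$ and $\phi(x)=-|x|$ cover the failures of the reverse inclusions among $\wh{\partial}$, $\partial_B$, $\partial$, $\partial_C$ and of ${\rm conv}(\partial_B)\subseteq{\rm conv}(\wh{\partial})$, but in one variable the union $\partial_B\phi(0)\cup\wh{\partial}\phi(0)$ happens to equal $\partial\phi(0)$ for both; your separable example $\phi(x,y)=|x|-|y|$, with $\partial\phi(0)=[-1,1]\times\{-1,1\}$ strictly containing the four corners, is genuinely needed to show that the first inclusion of (i) can itself be strict.
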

\begin{proof}
It is known from \cite[Theorem 8.6]{RockafellarRWets98} that $\wh{\partial} \phi(\bar{x}) \, \subseteq \, \partial  \phi(\bar{x})$ and
from \cite[Theorem 3.57]{Mordukhovich2006} that ${\rm conv} ( \,\partial  \phi(\bar{x}) \, ) \, = \, \partial_C \, \phi(\bar{x})$.
Thus, 
it remains to show that $\partial_B \phi(\bar{x})\subseteq \partial \phi(\bar{x})$.
Let $\bar{v} \in \partial_B \phi(\bar{x})$.
Then there exists $\{x^k\} \subseteq \Omega$ converging to $\bar{x}$ such that $\phi$ is F-differentiable at $x^k$ and
$\nabla \phi(x^k) \to \bar{v}$.  Since $\wh{\partial} \phi(x^k) = \{ \nabla \phi(x^k)\}$ (c.f.\ \cite[Exercise 8.8]{RockafellarRWets98}),
we have $\bar{v}\in \partial \phi(\bar{x})$ by the definition of $\partial \phi$.

We provide two examples to show the possible invalidity of the omitted inclusions.
For $\phi(x) = |x|$, we have $ \partial \phi(0) \,=\,  \wh{\partial} \phi(0) \,= \,[-1,1] \nsubseteq  \partial_B \phi(0) \, = \, \{ -1,1 \}$.
For $\phi(x) = -|x|$,  we have $\partial_C \phi(0) \,= \,[-1,1] \, \nsubseteq \, {\partial}_B \phi(0) \, = \, \partial \phi(0) \, = \, \{-1,1\} \, \nsubseteq \,
\wh{\partial} \phi(0) \, = \, \emptyset$ and ${\rm conv}\, (\,\partial_B \phi(0)\,)\,=\,[-1,1]\,\nsubseteq\, {\rm conv}\, (\,\wh{\partial} \phi(0)\,) \, = \, \emptyset$.
\end{proof}

\begin{figure}[H]
\centering
\begin{minipage}{.47\textwidth}
\begin{center}
\begin{tikzpicture}[scale = 0.7]
    \begin{scope}[blend group=soft light]
    \fill[blue!40!white]    (0,0)  ellipse (4 and 2.1);
    \fill[blue!20!white]  (-1,0) ellipse (2.5 and 1.7);
    \fill[blue!2!white]   (-1,0.7) ellipse (1.1 and 0.8);
    \fill[blue!2!white] (-2,0) ellipse (0.85 and 1.1);
    \end{scope}
    \node at (2.5,0) (A) {\small ${\partial_C \phi(x)}$};
    \node at (0,-0.5) (B) {\small ${\partial \phi(x)}$};
    \node at (-0.6,0.6) (C) {\small ${\wh{\partial} \phi(x)}$};
    \node at (-2,-0.4) (D) {\small ${{\partial}_B \phi(x)}$};
    \end{tikzpicture}
    \caption{\scriptsize Relationship between the subdifferentials}
    \label{figure:subdiff}
    \end{center}
\end{minipage}
\begin{minipage}{.5\textwidth}
\begin{center}
\begin{tikzpicture}[scale = 0.65]
    \begin{scope}
    \fill[blue!40!white]    (0,0)  ellipse (4.5 and 2.3);
    \fill[blue!30!white]  (-1,0) ellipse (3.2 and 1.7);
    \fill[blue!20!white]   (-1.6,0) ellipse (2.4 and 1.3);
    \fill[blue!10!white] (-2.2,0) ellipse (1.7 and 1);
    \fill[blue!2!white]    (-2.9,0.2) ellipse (0.7 and 0.5);
    \end{scope}
\node at (2.6,-1) (A) {\scriptsize{\textbf{critical}}};
     \node at (1.8,-1.5) (A) {\scriptsize{\textbf{(for dc fncs.)}}};
    \node at (0.9,-0.9) (B)  {\scriptsize{\textbf{C-stat}}};
    \node at (-0.2,-0.6) (C)  {\scriptsize\textbf{l-stat}};
    \node at (-1.3,0) (D)  {\scriptsize\textbf{d-stat}};
     \node at (-2.2,-0.5) (A) {\scriptsize{\textbf{(for dd fncs.)}}};
\node at (-2.9,0.2) (E)  {\scriptsize\textbf{locmin}};
\end{tikzpicture}
\caption{\scriptsize Relationship between the stationary points}
 \label{figure:stationarity}
 \end{center}
\end{minipage}
\end{figure}
\vskip -0.5cm

Let $X$ be a closed convex set in $\Omega$. It is known that a necessary condition for $\bar{x}\in X$ to be a local minimum of $\phi$ is $0\in \wh{\partial}\left(\phi(\bar{x}) + \delta_X(\bar{x})\right)$ \cite[Theorem 10.1]{RockafellarRWets98}, where $\delta_X(x) \,\triangleq\, \left\{
\begin{array}{ll} 0 & \mbox{if $x\in X$} \\
+\infty & \mbox{otherwise}
\end{array}
\right.$ is the indicator function of $X$ at $x$.
If $\phi$ is  locally Lipschitz continuous near $\bar{x}$ and directionally differentiable (dd) at $\bar{x}$, the latter condition is equivalent to the d(irectional)-stationarity of $\bar{x}$, i.e.,
$$
\phi^{\, \prime}(\bar{x};v) \, \triangleq \, \displaystyle{
\lim_{\delta\downarrow 0}
} \, \displaystyle{
\frac{\phi(\bar{x} + \delta v) - \phi(\bar{x})}{\delta}
}  \,\geq\, 0, \epc \forall \, v \, \in \, X - \bar{x}.
$$
Using the limiting subdifferential and Clarke subdifferential, respectively, we say that a point $\bar{x}\in X$ is a
l(imiting)-stationary point of $\phi$ on $X$ if
$0\in {\partial}\left(\phi(\bar{x}) + \delta_X(\bar{x})\right)$, and a
C(larke)-stationary point if $0\in 
{\partial}_C \phi(\bar{x})  + \mathcal{N}(\bar{x};X)$, which implies that
\[\phi^{\circ}(\bar{x};v) \, \triangleq \displaystyle{
\limsup_{x \to \bar{x},\,\delta \downarrow 0}
} \, \displaystyle{
\frac{\phi(x + \delta v)- \phi(x)}{\delta}
} \, \geq \, 0, \epc \forall \, v \, \in \, X - \bar{x}.
\]
Based on Proposition \ref{prop: subdifferentials} and \cite[Exercise 10.10]{RockafellarRWets98}, l-stationarity implies C-stationarity; and if $\phi$ is locally Lipschitz near $\bar{x}$ and dd at this point, d-stationarity implies l-stationarity.
The following examples show that the reverse implications do not hold without additional assumptions.

\begin{example}
Consider the univariate function $\phi(x) =\max( \,-|x|, x-1\,)$ for $x\in \mathbb{R}$.  Since $\partial_C \,\phi(0) = [-1,1]$ and
$\partial \phi(0) = \{-1,1\}$, it holds that $x=0$ is a C-stationary point of $\phi$, but fails to be a l-stationary point.
The unique l-stationary point of $\phi$ on $\mathbb{R}$ is  $x = \frac{1}{2}$.
For the univariate function $\phi(x) = \max\left(\,-x-1,\,\min(-x,\,0)\,\right)$,
since $\partial \phi(0) = \{-1,0\}$ and $\wh{\partial} \phi(0) = \emptyset$, it holds that $x=0$ is a l-stationary point of $\phi$,
but not a d-stationary point.  The unique d-stationary point of $\phi$ on $\mathbb{R}$ is  $x = -1$. \hfill $\Box$
\end{example}

If  $\phi = \phi_1 - \phi_2$ is a difference of two convex functions $\phi_1$ and $\phi_2$, it holds that
$\partial_C \phi(x) \subseteq \partial_C \phi_1(x) -\partial_C \phi_2(x)$ for any $x\in \Omega$
(cf.\ \cite[Corollary 2]{Clarke1983}).   A point $\bar{x}\in \Omega$ is said to be a  critical point of $\phi$ on $X$ if
$$\partial_C \phi_2(\bar{x}) \, \cap \, [ \,\partial_C \phi_1(\bar{x}) + \mathcal{N}(\bar{x};X) \, ]
\, = \, \partial \phi_2(\bar{x}) \, \cap \, [ \, \partial \phi_1(\bar{x}) + \mathcal{N}(\bar{x};X) \, ] \neq \emptyset.$$
Different from the above mentioned concepts of stationary points, a critical point depends on the dc decomposition.
Since there are infinite many dc decompositions of a given dc function, it is likely that a critical point provides no information
on the local minima of that function.  This can be seen from the following example.
%


\begin{example}\label{ex: critical}
Consider the univariate function  $\phi(x) = \max\{x,-x-4\}$ on $\mathbb{R}$, for which $x = -2$ is the
unique C-stationary point (and the global minimizer).  For the dc decomposition $\phi = \phi_1 - \phi_2$ with
$\phi_1(x) =\max\{2x,0, -2x - 4\}$ and $\phi_2(x) = |x|$,
we have $\partial \phi_1(0) \, \cap \, \partial \phi_2(0) = [0,1]$,  implying that $x = 0$ is a critical point of $\phi$ on $\mathbb{R}$.
\hfill $\Box$
\end{example}

The relationship between the various kinds of stationary points and a critical point (for dc problems) is summarized in  Figure \ref{figure:stationarity}.
As a caution to the reader, we note that $0 \in \partial_B \phi(\bar{x})$ is not a necessary condition for $\bar{x}$
being a local minima of a locally Lipschitz continuous function $\phi$; this can be seen from $\phi(x) = |x|$ at $x = 0$.

We close this section by mentioning three properties of d-stationarity that further highlight the fundamental importance of this
stationarity concept.  The first property asserts that a d-stationary point must be ``locally $\varepsilon$-first-order minimizing'';
the second and third property are applicable to the least-squares piecewise affine regression problem.
We recall that a function $\psi$ is B(ouligand)-differentiable at a point $\bar{x}$ \cite[Definition~3.1.2]{FacchineiPang2003}
if it is both locally Lipschitz continuous near $\bar{x}$ and directionally differentiable at $\bar{x}$.
Proof of the proposition below
is omitted as it is not difficult; for related results, see \cite{CuiPang18,ChangHongPang17}.

\begin{proposition} \label{pr:properties of d-stat}
Let $X$ be a closed convex set contained in the open set $\Omega \subseteq \mathbb{R}^n$.  The following three statements hold:\\
(i) Let $\psi : \Omega \to \mathbb{R}$ be B-differentiable at a d-stationary point $\bar{x} \in X$.
It holds that for every $\varepsilon > 0$, there exists an open neighborhood ${\cal N}$ of $\bar{x}$ such that
$\psi(x) \geq \psi(\bar{x}) - \varepsilon \, \| x - \bar{x} \, \|$
for all $x \in X \cap {\cal N}$.\\
(ii) Let $\psi : \Omega \to \mathbb{R}^m$ be piecewise affine and $\Phi : \mathbb{R}^m \to \mathbb{R}$ be convex.  It holds that every
d-stationary point of the composite function $\Phi \circ \psi$ on the set $X$ is a local minimizer.\\
(iii)  If $X$ is polyhedral and $\psi$ is piecewise linear-quadratic on $X$, then the set of values of $\psi$ on the set of
d-stationary points of $\psi$ on $X$ is finite.
\end{proposition}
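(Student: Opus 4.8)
The plan is to treat the three statements separately, with the genuine work concentrated in~(iii).

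For (i), I would combine the first-order expansion furnished by B-differentiability with the directional-stationarity inequality. Since $\psi$ is B-differentiable at $\bar{x}$ it is locally Lipschitz and directionally differentiable there, and hence (by the standard uniform first-order estimate for locally Lipschitz, directionally differentiable functions, cf.\ \cite[Definition~3.1.2]{FacchineiPang2003} and the remarks following it) it admits the expansion $\psi(\bar{x}+v)=\psi(\bar{x})+\psi^{\,\prime}(\bar{x};v)+o(\|v\|)$ with the remainder uniform in the direction $v$. For any $x\in X$ the convexity of $X$ gives $v:=x-\bar{x}\in X-\bar{x}$, so d-stationarity yields $\psi^{\,\prime}(\bar{x};v)\geq 0$ and therefore $\psi(x)-\psi(\bar{x})\geq o(\|x-\bar{x}\|)$. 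Given $\varepsilon>0$, shrinking the neighborhood $\mathcal{N}$ so that $|o(\|v\|)|\leq \varepsilon\|v\|$ on it delivers the asserted inequality.

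For (ii), I would exploit the local affine structure of $\psi$. Near $\bar{x}$ the piecewise affine map coincides with finitely many affine selections $\psi(x)=A_jx+b_j$ on polyhedral pieces $P_j$ whose union covers a neighborhood of $\bar{x}$, and the associated cones $C_j:=\{\,v:\bar{x}+tv\in P_j\ \text{for all small}\ t>0\,\}$ cover the direction space. On each cone define the convex function $g_j(v):=\Phi(\psi(\bar{x})+A_jv)$ (convex, being the composition of the convex $\Phi$ with an affine map), and note that $\Phi(\psi(\bar{x}+v))=g_j(v)$ for small $v\in C_j$. The chain rule for the composition gives $g_j^{\,\prime}(0;v)=\Phi^{\,\prime}(\psi(\bar{x});A_jv)=\Phi^{\,\prime}(\psi(\bar{x});\psi^{\,\prime}(\bar{x};v))=(\Phi\circ\psi)^{\,\prime}(\bar{x};v)$, while convexity of $g_j$ supplies $g_j(v)\geq g_j(0)+g_j^{\,\prime}(0;v)$. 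For a feasible direction $v\in(X-\bar{x})\cap C_j$, d-stationarity makes the last term nonnegative, so $\Phi(\psi(\bar{x}+v))\geq \Phi(\psi(\bar{x}))$. Since every nearby $x\in X$ has $x-\bar{x}$ lying in some $C_j\cap(X-\bar{x})$ and the finitely many pieces yield a common neighborhood radius, $\bar{x}$ is a local minimizer.

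Statement (iii) is the crux, and the hard part will be showing that stationarity pins down the \emph{value} of $\psi$, not merely the location of the stationary points. First I would invoke the PLQ structure to write $X$ as a finite union of polyhedra on each of which $\psi$ is a single quadratic, and then pass to the common polyhedral subdivision generated by all these polyhedra (their faces and mutual intersections). This produces finitely many cells $C$ with the property that $\psi$ restricts to one quadratic $q_C$ on each cell and every point of $X$ lies in the relative interior of exactly one cell. Fixing a d-stationary point $\bar{x}$ and letting $C$ be the cell with $\bar{x}\in\operatorname{ri}(C)$, I would argue as follows: because $\bar{x}$ lies in the relative interior of $C\subseteq X$ and $\psi=q_C$ on a relative neighborhood, every direction $w$ parallel to $\operatorname{aff}(C)$ is feasible in both orientations, so $\psi^{\,\prime}(\bar{x};\pm w)=\pm\nabla q_C(\bar{x})^Tw\geq 0$ forces $\nabla q_C(\bar{x})^Tw=0$; thus $\bar{x}$ is an unconstrained critical point of $q_C$ restricted to $\operatorname{aff}(C)$. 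The decisive observation is that a quadratic is constant on its own critical set: writing the restriction as $\tilde q(y)=\tfrac12 y^TMy+c^Ty+d$ with $M$ symmetric, the stationarity equation $My+c=0$ gives $\tilde q(y)=\tfrac12 c^Ty+d$, and for two critical points $y_1,y_2$ one has $M(y_1-y_2)=0$, hence $c^T(y_1-y_2)=-y_1^TM(y_1-y_2)=0$ and $\tilde q(y_1)=\tilde q(y_2)$. Consequently all d-stationary points in $\operatorname{ri}(C)$ share a single value of $\psi$, and summing over the finitely many cells produces a finite set of d-stationary values. The relative-interior argument is precisely what upgrades d-stationarity into full criticality along $\operatorname{aff}(C)$, making the constancy-on-critical-sets computation available; the inequality constraints coming from directions that leave $C$ into adjacent cells are then not even needed.
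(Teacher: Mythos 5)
Your three arguments are all correct. The paper itself omits the proof of this proposition (deferring to \cite{CuiPang18,ChangHongPang17}), so there is no in-paper argument to compare against, but what you write is exactly the intended route: the uniform first-order expansion from B-differentiability for (i), the local conical/affine selection structure plus convexity of $\Phi$ composed with each affine piece for (ii), and for (iii) the refinement to a polyhedral subdivision whose relative interiors partition $X$, the upgrade of d-stationarity to two-sided criticality along $\mathrm{aff}(C)$ at a point of $\mathrm{ri}(C)$, and the observation that a quadratic is constant on its critical set.
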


\section{The Nonmonotone Majorization-Minimization Algorithm} \label{sec: MM}

We aim to compute a d-stationary solution of problem \eqref{eq:optimization model}.
The approach we propose is to apply the majorization-minimization (MM) algorithm with suitable modifications.
Originally described in \cite[Section 9.3(d)]{OrtegaRheinboldt1970}, the idea of the basic MM algorithm is to solve
a sequence of convex minimization subproblems by creating surrogate functions that majorize the original objective function.
It unifies various optimization methods, including the projected/proximal gradient method \cite{Bertsekas16}
and the dc algorithm \cite{LeThiPham05,LeHuynhPham09}.  It is a generalization of the expectation-maximization (EM) algorithm, as known in the statistics community,
for finding maximum likelihood estimators of parameters in statistical models \cite{DempsterLairdRubin1977,Wu1983}.
See \cite{HunterLange2004,Lange16} for comprehensive discussions of the MM algorithm (without modifications)
and \cite{Mairal2013,Mairal2015,BoltePauwels2016} for recent developments.

Given a locally Lipschitz continuous function $\phi : \Omega \to \mathbb{R}$ defined on an open set $\Omega \subseteq \mathbb{R}^n$
containing the closed convex set $X$ and a point $y \in X$,
the continuous function $\wh{\phi}\,(\bullet,y)$ is said to be a majorizing function of $\phi$  if: (i) $\wh{\phi}\,(\bullet,\,y)$ is
convex on $X$; (ii) $\phi(x) \leq \wh{\phi}\,(x,\,y)$ for all $x\in X$; and (iii) $\wh{\phi}\,(y) = \wh{\phi}\,(y,\,y)$.
The key to a successful application of the MM algorithm to the optimization problem \eqref{eq:optimization model}
hinges on a readily available convex majorization of the objective function $f_N$ at a given iterate $\theta^{\, \nu}$.
In turn, it suffices to derive such a majorizing function for each summand $\varphi_s \circ \psi_s$;
for the dc regularizer $P(\theta) = P_1(\theta)- P_2(\theta)$ a convex majorizing function is easily obtained as:
$\wh{P} (\theta,\theta^{\, \nu}) = P_1(\theta)-  [ \, P_2(\theta^{\, \nu})+(a^{2; \nu})^T( \theta - \theta^{\, \nu}) \, ]$,
where $a^{2; \nu} \in \partial P_2(\theta^\nu)$.  The following fact combined with the difference-max structure \eqref{dpc}
of each function $\psi_s$ easily yields a convex majorant of each composite function $\varphi_s \circ \psi_s$.



\begin{lemma}\label{lemma: univariate function}
A univariate convex function $f$ can be written as the
sum of a convex non-decreasing function $f^{\, \uparrow}$ and a convex non-increasing function $f^{\, \downarrow}$.
\end{lemma}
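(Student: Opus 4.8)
The plan is to exploit the fact that convexity of a univariate function is equivalent to the monotonicity of its one-sided derivative, and then to split that monotone derivative into a nonnegative and a nonpositive part. Concretely, since $f$ is convex and finite on $\mathbb{R}$, it is continuous, its right derivative $g \triangleq f^{\,\prime}_+$ exists at every point, $g$ is non-decreasing, and the fundamental theorem of calculus for convex functions gives $f(x) = f(0) + \int_0^x g(t)\,dt$ for all $x$. The desired decomposition of $f$ will be obtained by correspondingly decomposing $g$.

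First I would set $g^{\,+}(t) \triangleq \max(g(t),0)$ and $g^{\,-}(t) \triangleq \max(-g(t),0)$, so that $g = g^{\,+} - g^{\,-}$ with both parts nonnegative, and define
\[
f^{\,\uparrow}(x) \triangleq f(0) + \int_0^x g^{\,+}(t)\,dt,
\qquad
f^{\,\downarrow}(x) \triangleq - \int_0^x g^{\,-}(t)\,dt .
\]
By construction $f^{\,\uparrow} + f^{\,\downarrow} = f(0) + \int_0^x g = f$. It then remains to check the monotonicity and convexity of each summand. Since $g^{\,+} \geq 0$, the function $f^{\,\uparrow}$ is non-decreasing, and since $-g^{\,-} \leq 0$, the function $f^{\,\downarrow}$ is non-increasing. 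For convexity I would verify that each integrand is itself non-decreasing: $g^{\,+}$ is non-decreasing because $g$ is non-decreasing and $t \mapsto \max(t,0)$ is non-decreasing, while the right derivative of $f^{\,\downarrow}$, namely $-g^{\,-}$, is non-decreasing because $g^{\,-} = \max(-g,0)$ is non-increasing. A continuous function on $\mathbb{R}$ whose right derivative is non-decreasing is convex, which finishes the argument.

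The only point requiring genuine care is the appeal to the representation $f(x) = f(0) + \int_0^x f^{\,\prime}_+$ and to the characterization of convexity through the monotonicity of the right derivative at points where $f$ is not differentiable; these are standard consequences of the local Lipschitz continuity and absolute continuity of finite convex functions on $\mathbb{R}$, so I expect no essential obstacle. As a more elementary, integration-free alternative, one may instead locate a minimizer $x^{\ast}$ of $f$ (which exists whenever $f$ is neither globally non-decreasing nor globally non-increasing, the two trivial cases) and set $f^{\,\uparrow}(x) \triangleq f(\max(x,x^{\ast}))$ and $f^{\,\downarrow}(x) \triangleq f(\min(x,x^{\ast})) - f(x^{\ast})$; here the main thing to verify is convexity across the transition point $x^{\ast}$, which follows because the left derivative of $f^{\,\uparrow}$ there equals $0 \leq f^{\,\prime}_+(x^{\ast})$, its right derivative, with the symmetric statement holding for $f^{\,\downarrow}$.
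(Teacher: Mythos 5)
Your primary argument is correct but takes a genuinely different route from the paper's. You decompose the right derivative $f^{\,\prime}_+$ into its positive and negative parts and integrate each piece, so that monotonicity of the two summands is immediate and convexity reduces to the standard fact that the integral of a non-decreasing (right-continuous) integrand is convex; the whole construction is case-free and handles globally monotone $f$ without special treatment. The paper instead works directly with function values: it locates a point $t_0$ with $f^{\,\prime}(t_0;\pm 1)\geq 0$ (a minimizer, when one exists), defines $f^{\,\uparrow}$ by freezing $f$ at the value $f(t_0)$ to the left of $t_0$ and $f^{\,\downarrow}$ by freezing it to the right, and verifies convexity across the breakpoint by a direct two-point convexity inequality, treating the globally non-decreasing and non-increasing situations as separate trivial cases. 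The paper's route is more elementary---it uses only the definition of convexity and needs neither the integral representation $f(x)=f(0)+\int_0^x f^{\,\prime}_+$ nor the right-continuity of $f^{\,\prime}_+$---while yours buys a uniform, formula-driven decomposition at the cost of invoking that standard machinery. Your closing ``integration-free alternative'' is essentially the paper's proof (up to reallocating the constant $f(x^{\ast})$ between the two pieces), so in effect you have supplied both arguments.
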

\begin{proof}
%
Note that if there exists $t_0\in \mathbb{R}$ such that $f^{\, \prime}(t_0;\pm 1)\geq 0$, then $f$ achieves its minimum value at
$t_0$.  By the convexity of $f$, one can easily check that $f$ is non-increasing on $(-\infty, t_0\,]$ and non-decreasing on $[\, t_0, \infty)$,
leading to a choice of  $f^{\, \uparrow}(t) \, \triangleq \,\left\{\begin{array}{ll}
f(t) - f(t_0) & \mbox{if $t \geq t_0$} \\
 0 & \mbox{if $t < t_0$}
\end{array} \right.$ and $f^{\, \downarrow}(t) \, \triangleq \, \left\{ \begin{array}{ll}
f(t_0) & \mbox{if $t \geq t_0$}\\
f(t) & \mbox{if $t < t_0$}.
\end{array}\right.$
To see the convexity of $f^{\, \uparrow}$, it suffices to observe that for any $t_1$ and $t_2$ in
$\mathbb{R}$ satisfying $t_1 < t_0 < t_2$ and for any $\lambda\in (0,1)$,
\[ \begin{array}{rl}
f^{\, \uparrow}((1-\lambda)t_1  + \lambda t_2)
 \leq & f^{\, \uparrow}((1-\lambda)t_0  + \lambda t_2) \,= \,f((1-\lambda)t_0  + \lambda t_2)-f(t_0)\\[0.1in]
 \leq & (1-\lambda) f(t_0) + \lambda f(t_2) - f(t_0)
\, = \, (1-\lambda) \, f^{\, \uparrow}(t_1) + \lambda \, f^{\, \uparrow}(t_2).
\end{array}
\]
By a similar argument, we can show the convexity of $f^{\, \downarrow}$.

If such $t_0$ does not exist, then for all $t\in \mathbb{R}$, either $f^{\, \prime}(t;1) \geq 0 \geq f^{\, \prime}(t;-1)$, or
$f^{\, \prime}(t;1) \leq 0 \leq f^{\, \prime}(t;-1)$.  The former situation implies that $f$ is
non-decreasing on $\mathbb{R}$; hence we may take $f^{\, \uparrow}=f$ and $f^{\, \downarrow}=0$ in this case; while the
latter situation implies that $f$ is non-increasing on $\mathbb{R}$; hence we may choose $f^{\, \downarrow} = f$ and $f^{\, \uparrow} =0$.
\end{proof}


Since a convex non-decreasing (non-increasing) function composed with a convex (concave) function is convex,
the following corollary is easy to obtain.  The proof is omitted for brevity.

\begin{corollary}\label{corollary: composite majorant}
 Let $\varphi$ be a univariate convex function with the
decomposition $\varphi = \varphi^{\, \uparrow} + \varphi^{\, \downarrow}$ as in Lemma~\ref{lemma: univariate function}.
For any $\theta^{\, 0} \in \Theta$,
if $\wh{\psi}(\bullet,\theta^{\, 0})$ is a concave minorizing function and $\widecheck{\psi}(\bullet,\theta^{\, 0})$ a convex
majoring function of $\psi$ on $\Theta$, i.e.,
\[
\wh{\psi}(\theta,\theta^{\, 0})\leq \psi(\theta) \leq \widecheck{\psi}(\theta,\theta^{\, 0}),\epc \forall\;\theta\in \Theta,
\]
then
$\varphi^{\, \uparrow} \circ \widecheck{\psi}(\bullet,\theta^{\, 0}) + \varphi^{\, \downarrow} \circ \wh{\psi}(\bullet,\theta^{\, 0})$
is a convex majorant of the composite $\varphi\circ \psi$ on $\Theta$.
\end{corollary}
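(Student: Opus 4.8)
The plan is to verify the two defining properties of a convex majorant in turn: convexity of the candidate surrogate on $\Theta$, and the pointwise upper bound $\varphi \circ \psi \leq \varphi^{\, \uparrow} \circ \widecheck{\psi}(\bullet,\theta^{\,0}) + \varphi^{\, \downarrow} \circ \wh{\psi}(\bullet,\theta^{\,0})$ over $\Theta$. Both follow from the monotonicity built into the decomposition $\varphi = \varphi^{\, \uparrow} + \varphi^{\, \downarrow}$ supplied by Lemma~\ref{lemma: univariate function}, combined with the sandwich $\wh{\psi}(\theta,\theta^{\,0}) \leq \psi(\theta) \leq \widecheck{\psi}(\theta,\theta^{\,0})$ that is assumed in the hypothesis.

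For convexity, I would invoke the standard composition rule twice, as already flagged in the sentence preceding the corollary. The outer function $\varphi^{\, \uparrow}$ is convex and non-decreasing while the inner function $\widecheck{\psi}(\bullet,\theta^{\,0})$ is convex, so the composite $\varphi^{\, \uparrow} \circ \widecheck{\psi}(\bullet,\theta^{\,0})$ is convex; symmetrically, $\varphi^{\, \downarrow}$ is convex and non-increasing while $\wh{\psi}(\bullet,\theta^{\,0})$ is concave, so $\varphi^{\, \downarrow} \circ \wh{\psi}(\bullet,\theta^{\,0})$ is convex as well. Since the sum of two convex functions is convex, this half of the statement is settled.

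For the majorization inequality, I would fix an arbitrary $\theta \in \Theta$ and split the left-hand side using the decomposition, writing $\varphi(\psi(\theta)) = \varphi^{\, \uparrow}(\psi(\theta)) + \varphi^{\, \downarrow}(\psi(\theta))$, and then bound each summand separately. Because $\varphi^{\, \uparrow}$ is non-decreasing and $\psi(\theta) \leq \widecheck{\psi}(\theta,\theta^{\,0})$, the first term satisfies $\varphi^{\, \uparrow}(\psi(\theta)) \leq \varphi^{\, \uparrow}(\widecheck{\psi}(\theta,\theta^{\,0}))$; because $\varphi^{\, \downarrow}$ is non-increasing and $\wh{\psi}(\theta,\theta^{\,0}) \leq \psi(\theta)$, the second term satisfies $\varphi^{\, \downarrow}(\psi(\theta)) \leq \varphi^{\, \downarrow}(\wh{\psi}(\theta,\theta^{\,0}))$. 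Adding the two inequalities yields exactly the claimed upper bound, and since $\theta$ was arbitrary this holds on all of $\Theta$.

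The only place where care is genuinely required --- and the step I would treat as the crux --- is the pairing in the previous paragraph: the convex majorant $\widecheck{\psi}$ of $\psi$ must be composed with the non-decreasing piece $\varphi^{\, \uparrow}$, while the concave minorant $\wh{\psi}$ must be composed with the non-increasing piece $\varphi^{\, \downarrow}$. Reversing either pairing breaks both the convexity argument and the direction of the inequality, so the entire result hinges on matching each monotone component of $\varphi$ to the one-sided bound on $\psi$ that moves its argument in the favorable direction. If in addition the bounds are tight at the base point, i.e. $\wh{\psi}(\theta^{\,0},\theta^{\,0}) = \psi(\theta^{\,0}) = \widecheck{\psi}(\theta^{\,0},\theta^{\,0})$, the same splitting shows the surrogate coincides with $\varphi \circ \psi$ at $\theta^{\,0}$, giving the tangency property of a full MM majorant; this is not needed for the stated conclusion but is what makes the corollary usable inside the algorithm.
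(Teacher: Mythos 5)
Your proof is correct and is exactly the argument the paper intends: the paper omits the proof, stating only that ``a convex non-decreasing (non-increasing) function composed with a convex (concave) function is convex,'' which is the composition rule you apply, and your monotonicity argument for the majorization inequality is the evident complement. The pairing you highlight as the crux ($\varphi^{\,\uparrow}$ with $\widecheck{\psi}$, $\varphi^{\,\downarrow}$ with $\wh{\psi}$) is indeed the only delicate point, and your remark on tangency at $\theta^{\,0}$ correctly anticipates how the corollary is used in the MM algorithm.
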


\gap
To simplify the notation, we present the MM algorithm and its convergence property for solving the following
single-summand formulation of \eqref{eq:optimization model}:
\begin{equation}\label{eq:optimization model2}
\operatornamewithlimits{\mbox{minimize}}_{\theta \in \Theta}\epc  \Psi(\theta)\,\triangleq \, \varphi \circ \psi(\theta),
\end{equation}
where $\varphi$ is a univariate convex function and $\psi(\theta) = g(\theta) - h(\theta)$, where
\[
g(\theta) \, \triangleq \, \max_{1\leq i\leq k_1} \psi_{1,i}(\theta) \epc \mbox{and} \epc
h(\theta) \, \triangleq \, \max_{1\leq i\leq k_2} \psi_{2,i}(\theta), 
\]
with each $\psi_{1,i}$ and $\psi_{2,i}$ being a differentiable convex function.
The treatment is clearly extendable to $\displaystyle\frac{1}{N}\sum_{s=1}^N \varphi_s \circ \psi_s(\theta) + \gamma_N \,P(\theta)$
with each pair $(\varphi_s, \psi_s)$ as above and $P$ satisfying assumption {\bf C3}.  We omit the details of the extended treatment
but will employ it in the computational experiments; see Section~\ref{sec:numerical}.

Denote the index sets of
maximizing functions in $g$ and $h$ at $\theta\in \Theta$ as,
\[
\mathcal{A}_{1}(\theta) \, \triangleq  \,\displaystyle{
\operatornamewithlimits{\mbox{argmax}}_{1\leq i\leq k_1}
} \, \left\{\, \psi_{1,i}(\theta)\, \right\} \epc \mbox{and} \epc
\mathcal{A}_{2}(\theta) \, \triangleq \, \displaystyle{
\operatornamewithlimits{\mbox{argmax}}_{1\leq i\leq k_2}
} \, \left\{\, \psi_{2,i}(\theta)\, \right\}.
\]
For any $(\theta, \bar{\theta})\in \Theta \times \Theta$ and any $(i_1,i_2)\in \mathcal{A}_{1}( \bar{\theta} )\times \mathcal{A}_{2}( \bar{\theta} )$,
we have
$$
\underbrace{\left[\, g( \bar{\theta} ) + \nabla \psi_{1,i_1}( \bar{\theta} )^T \left( \, \theta - \bar{\theta} \, \right) \, \right]}_{
\mbox{linearization of $\psi_{1,i_1}$ at $\bar{\theta}$}} - h(\theta) \, \leq \, \psi(\theta) \, \leq \,
g(\theta) - \underbrace{\left[ \, h( \bar{\theta} ) + \nabla \psi_{2,i_2}( \bar{\theta} )^T \left( \, \theta - \bar{\theta} \, \right) \, \right]}_{
\mbox{linearization of $\psi_{2,i_2}$ at $\bar{\theta}$}} \,,
$$
which, by Corollary~\ref{corollary: composite majorant}, leads to the following convex majorant of $\Psi$
in \eqref{eq:optimization model2}:
\[  \begin{array}{ll}
\mathcal{M}\Psi_{(i_1,i_2)}(\theta,\bar{\theta}) \triangleq
& \varphi^{\, \uparrow}\left(\,g(\theta) - \left[ h( \bar{\theta} ) + \nabla \psi_{2,i_2}( \bar{\theta} )^T \left( \theta - \bar{\theta} \right) \right] \,\right) +
\\[0.1in]
& \varphi^{\, \downarrow}\left(\,\left[ g( \bar{\theta} ) + \nabla \psi_{1,i_1}( \bar{\theta} )^T \left( \theta - \bar{\theta} \right) \right]
- h(\theta)\, \right).
 \end{array}
\]
The above function is nonsmooth because of the ``max'' in the functions $g$ and $h$.
Notice that we essentially leave $\varphi$ unchanged instead of approximating it.  In this way, we may presumably obtain a tighter approximation
of the original composite objective function $\varphi \circ \psi$.

\subsection{From pointwise max to constraints}

Before describing the MM-based algorithm that can be shown to converge to a d-stationary solution of
problem~\eqref{eq:optimization model2}, we first present the following lemma that characterizes a d-stationary solution of this problem 
as a solution of finitely many convex programs.

\begin{lemma}\label{lemma: d-stationary}
The point $\overline{\theta}\in \Theta$ is a d-stationary point of \eqref{eq:optimization model2} if and only if for all
$({i}_1, {i}_2)\in \mathcal{A}_1(\bar{\theta})\times \mathcal{A}_2(\bar{\theta})$,
$\bar{\theta}\in \displaystyle\operatornamewithlimits{\mbox{argmin}}_{\theta\in \Theta} \mathcal{M}\Psi_{({i}_1, {i}_2)}(\theta,\bar{\theta})$.
\end{lemma}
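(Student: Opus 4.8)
The plan is to reduce the claimed equivalence to a comparison of one-sided directional derivatives, exploiting that each surrogate $\mathcal{M}\Psi_{(i_1,i_2)}(\cdot,\bar{\theta})$ is convex. For a convex function $F$ and the convex set $\Theta$, the point $\bar{\theta}$ lies in $\operatorname*{argmin}_{\theta\in\Theta}F(\theta)$ if and only if $F^{\,\prime}(\bar{\theta};v)\ge 0$ for every $v\in\Theta-\bar{\theta}$; and, by definition, $\bar{\theta}$ is d-stationary for \eqref{eq:optimization model2} exactly when $\Psi^{\,\prime}(\bar{\theta};v)\ge 0$ for every $v\in\Theta-\bar{\theta}$. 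Thus it suffices to relate, for each fixed direction $v\in\Theta-\bar{\theta}$, the value $\Psi^{\,\prime}(\bar{\theta};v)$ to the first-argument directional derivatives of the surrogates at $\bar{\theta}$.

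First I would compute these directional derivatives. Writing $t\triangleq\psi(\bar{\theta})$ and invoking the chain rule for directional derivatives of the composition of the (locally Lipschitz, directionally differentiable) convex univariate $\varphi$ with the B-differentiable inner functions, together with the elementary max-rule $g^{\,\prime}(\bar{\theta};v)=\max_{i\in\mathcal{A}_1(\bar{\theta})}\nabla\psi_{1,i}(\bar{\theta})^Tv$ and $h^{\,\prime}(\bar{\theta};v)=\max_{i\in\mathcal{A}_2(\bar{\theta})}\nabla\psi_{2,i}(\bar{\theta})^Tv$, the split $\varphi=\varphi^{\,\uparrow}+\varphi^{\,\downarrow}$ from Lemma~\ref{lemma: univariate function} gives
\[
\Psi^{\,\prime}(\bar{\theta};v)=(\varphi^{\,\uparrow})^{\prime}\big(t;\,\pi\big)+(\varphi^{\,\downarrow})^{\prime}\big(t;\,\pi\big),\qquad \pi\triangleq g^{\,\prime}(\bar{\theta};v)-h^{\,\prime}(\bar{\theta};v),
\]
whereas the first-argument directional derivative of $\mathcal{M}\Psi_{(i_1,i_2)}(\cdot,\bar{\theta})$ at $\bar{\theta}$ is
\[
(\varphi^{\,\uparrow})^{\prime}\big(t;\,g^{\,\prime}(\bar{\theta};v)-\nabla\psi_{2,i_2}(\bar{\theta})^Tv\big)+(\varphi^{\,\downarrow})^{\prime}\big(t;\,\nabla\psi_{1,i_1}(\bar{\theta})^Tv-h^{\,\prime}(\bar{\theta};v)\big).
\]
Since $i_1\in\mathcal{A}_1(\bar{\theta})$ and $i_2\in\mathcal{A}_2(\bar{\theta})$, one has $\nabla\psi_{1,i_1}(\bar{\theta})^Tv\le g^{\,\prime}(\bar{\theta};v)$ and $\nabla\psi_{2,i_2}(\bar{\theta})^Tv\le h^{\,\prime}(\bar{\theta};v)$, so the argument fed to $\varphi^{\,\uparrow}$ is $\ge\pi$ while the one fed to $\varphi^{\,\downarrow}$ is $\le\pi$.

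The mechanism that drives both directions is the monotonicity of the directional-derivative maps inherited from the decomposition: as $\varphi^{\,\uparrow}$ is convex non-decreasing, $d\mapsto(\varphi^{\,\uparrow})^{\prime}(t;d)=\max\{d\,(\varphi^{\,\uparrow})^{\prime}_-(t),\,d\,(\varphi^{\,\uparrow})^{\prime}_+(t)\}$ is non-decreasing (both one-sided slopes being $\ge 0$), and dually $d\mapsto(\varphi^{\,\downarrow})^{\prime}(t;d)$ is non-increasing. Combining this with the two displayed inequalities shows that for \emph{every} $(i_1,i_2)\in\mathcal{A}_1(\bar{\theta})\times\mathcal{A}_2(\bar{\theta})$ the surrogate's directional derivative dominates $\Psi^{\,\prime}(\bar{\theta};v)$. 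Hence, if $\bar{\theta}$ is d-stationary, every surrogate has a non-negative directional derivative in each feasible direction, and convexity yields $\bar{\theta}\in\operatorname*{argmin}_{\theta\in\Theta}\mathcal{M}\Psi_{(i_1,i_2)}(\theta,\bar{\theta})$; this settles the ``only if'' implication.

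For the converse I would exploit tightness. Given $v$, choose $i_1^{\star}\in\mathcal{A}_1(\bar{\theta})$ attaining $\max_{i\in\mathcal{A}_1(\bar{\theta})}\nabla\psi_{1,i}(\bar{\theta})^Tv=g^{\,\prime}(\bar{\theta};v)$ and $i_2^{\star}\in\mathcal{A}_2(\bar{\theta})$ attaining the analogous maximum for $h$. For this pair both inner arguments collapse to $\pi$, so the surrogate's directional derivative equals $\Psi^{\,\prime}(\bar{\theta};v)$ exactly; if $\bar{\theta}$ minimizes every surrogate, the $(i_1^{\star},i_2^{\star})$ one in particular forces $\Psi^{\,\prime}(\bar{\theta};v)\ge 0$, and since $v\in\Theta-\bar{\theta}$ is arbitrary, $\bar{\theta}$ is d-stationary. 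I expect the only real obstacle to be bookkeeping rather than conceptual: justifying the directional chain rule for the nonsmooth compositions so the displayed formulas are legitimate, and carefully recording the signs and monotonicity of $(\varphi^{\,\uparrow})^{\prime}(t;\cdot)$ and $(\varphi^{\,\downarrow})^{\prime}(t;\cdot)$. Once these are in place, the active-index selection that produces the equality $\mathcal{M}\Psi_{(i_1^{\star},i_2^{\star})}^{\,\prime}=\Psi^{\,\prime}$ is the decisive step.
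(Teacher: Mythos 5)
Your proposal is correct and follows essentially the same route as the paper's proof: both reduce the argmin conditions to directional-derivative inequalities, exploit that $(\varphi^{\,\uparrow})^{\prime}(t;\cdot)$ is non-decreasing and $(\varphi^{\,\downarrow})^{\prime}(t;\cdot)$ is non-increasing to show each surrogate's directional derivative dominates $\Psi^{\,\prime}(\bar{\theta};\cdot)$, and for the converse select the active indices attaining $g^{\,\prime}(\bar{\theta};v)$ and $h^{\,\prime}(\bar{\theta};v)$ so that the surrogate's directional derivative coincides with $\Psi^{\,\prime}(\bar{\theta};v)$. No gaps.
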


\begin{proof}
``Only if.''  If $\overline{\theta}\in \Theta$ is a d-stationary point of \eqref{eq:optimization model}, then for all $\theta\,\in \, \Theta$,
\[\begin{array}{rl}
0 \,  \leq \, \Psi^{\, \prime}(\overline{\theta};\theta - \overline{\theta}) \, = &
\varphi^{\, \prime}(\psi(\overline{\theta});\psi^{\, \prime}(\overline{\theta};\theta - \overline{\theta}))
\\[0.1in]
 = & \left( \varphi^{\, \uparrow} \right)^{\prime}\left(\psi(\overline{\theta});\psi^{\, \prime}(\overline{\theta};\theta - \overline{\theta})\right)
+ \left( \varphi^{\, \downarrow} \right)^{\prime}\left(\psi(\overline{\theta});\psi^{\, \prime}(\overline{\theta};\theta - \overline{\theta})\right).
\end{array}
\]
We have $\psi(\overline{\theta}) = g(\overline{\theta}) - h(\overline{\theta})$ and
\[
\psi^{\, \prime}(\overline{\theta};\theta - \overline{\theta}) \, = \, \underbrace{\displaystyle{
\max_{i \in \mathcal{A}_1(\overline{\theta})}
} \, \nabla \psi_{1,i}(\overline{\theta})^T \left( \, \theta - \overline{\theta} \, \right)}_{\mbox{$= g^{\, \prime}(\overline{\theta};\theta - \overline{\theta})$}}
- \underbrace{\displaystyle{
\max_{i \in \mathcal{A}_2(\overline{\theta})}
} \, \nabla \psi_{2,i}(\overline{\theta})^T \left( \, \theta - \overline{\theta} \, \right)}_{\mbox{$= h^{\, \prime}(\overline{\theta};\theta - \overline{\theta})$}}.
\] 
Since $\varphi^{\, \uparrow}$ is a univariate non-decreasing convex function, it follows that the directional derivative
$\left( \varphi^{\, \uparrow} \right)^{\prime}(t;\bullet)$ is a non-decreasing function of the direction for
every $t \in \mathbb{R}$.  Hence, for every $i_2 \in \mathcal{A}_2(\overline{\theta})$,
\[ \begin{array}{l}
\left( \varphi^{\, \uparrow} \right)^{\prime}(\psi(\overline{\theta});\psi^{\, \prime}(\overline{\theta};\theta - \overline{\theta}))
 =  \left( \varphi^{\, \uparrow} \right)^{\prime}\left( g(\overline{\theta}) - h(\overline{\theta});
g^{\, \prime}(\overline{\theta};\theta - \overline{\theta}) - h^{\, \prime}(\overline{\theta};\theta - \overline{\theta}) \right) \\ [5pt]
\qquad\qquad \qquad\qquad \qquad \epc \leq  \left( \varphi^{\, \uparrow} \right)^{\prime}\left( g(\overline{\theta}) - h(\overline{\theta});
g^{\, \prime}(\overline{\theta};\theta - \overline{\theta}) - \nabla \psi_{2,i_2}(\overline{\theta})^T \left( \, \theta - \overline{\theta} \, \right) \right).
\end{array}
\]
Similarly, we also have, for every $i_1 \in \mathcal{A}_1(\overline{\theta})$,
\[
\left( \varphi^{\, \downarrow} \right)^{\prime}(\psi(\overline{\theta});\psi^{\, \prime}(\overline{\theta};\theta - \overline{\theta}))
\, \leq \, \left( \varphi^{\, \downarrow} \right)^{\prime}\left( g(\overline{\theta}) - h(\overline{\theta});
\nabla \psi_{1,i_1}(\overline{\theta})^T \left( \, \theta - \overline{\theta} \, \right) - h^{\, \prime}(\overline{\theta};\theta - \overline{\theta}) \right),
\]
because $\left( \varphi^{\, \downarrow} \right)^{\prime}(t;\bullet)$ is a non-increasing function  for
every $t \in \mathbb{R}$.  It therefore follows that
${\cal M}\Psi_{(i_1,i_2)}(\bullet,\bar{\theta})^{\, \prime}(\bar{\theta};\theta - \bar{\theta}) \, \geq \, 0$ for all $\theta \, \in \, \Theta$.
Since ${\cal M}\Psi_{(i_1,i_2)}(\bullet,\bar{\theta})$ is a convex program, it follows that $\bar{\theta}$ is a minimizer of this function over
$\Theta$.

\gap

``If.''  Conversely, suppose that for every
$({i}_1, {i}_2)\in \mathcal{A}_1(\bar{\theta})\times \mathcal{A}_2(\bar{\theta})$, it holds that
$\bar{\theta}\in \displaystyle\operatornamewithlimits{\mbox{argmin}}_{\theta\in \Theta} \mathcal{M}\Psi_{({i}_1, {i}_2)}(\theta,\bar{\theta})$.
Let $\theta \in \Theta$ be arbitrary.  Pick $i_1 \in \displaystyle{
\operatornamewithlimits{\mbox{argmax}}_{i \in \mathcal{A}_1(\overline{\theta})}
} \, \nabla \psi_{1,i}(\overline{\theta})^T \left( \, \theta - \overline{\theta} \, \right)$ and
$i_2 \in \displaystyle{
\operatornamewithlimits{\mbox{argmax}}_{i \in \mathcal{A}_2(\overline{\theta})}
} \, \nabla \psi_{2,i}(\overline{\theta})^T \left( \, \theta - \overline{\theta} \, \right)$.  We then have
\[
\Psi^{\, \prime}(\overline{\theta};\theta - \overline{\theta})
\, = \, {\cal M}\Psi_{(i_1,i_2)}(\bullet,\bar{\theta})^{\, \prime}(\bar{\theta};\theta - \bar{\theta}) \, \geq \, 0.
\]
Since $\theta \in \Theta$ is arbitrary, it follows that $\bar{\theta}$ is a d-stationary point of $\Psi$ on $\Theta$.
\end{proof}

Lemma~\ref{lemma: d-stationary} indicates that there is a ``for all index pairs'' condition in the requirement of d-stationarity.
Though $\mathcal{M}\Psi_{(i_1,i_2)}(\bullet,\theta^{\,\nu})$ is a convex majorant of $\Psi$ for any pair
$(i_1, i_2)\in \mathcal{A}_1(\theta^{\,\nu})\times \mathcal{A}_2(\theta^{\,\nu})$, by arbitrarily picking a single pair of indices
$(i_1,i_2)$ at each iteration of the MM algorithm, one may not obtain an algorithm that converges to a d-stationary point.
This non-convergence has been observed in \cite{PangRazaviyaynAlvarado16} when the dc algorithm (a special MM algorithm)
is applied to solve dc programs (a special case of problem \eqref{eq:optimization model2} with $\varphi$ being the identity function);
see specifically Example~4 in this reference.
In order for the MM algorithm to converge to a d-stationary point, we employ the $\varepsilon$-technique to
expand the argmax index sets of the functions $g$ and $h$.
Specifically, given $\varepsilon > 0$ and $\theta\in \Theta$, let
$$
\left\{\begin{array}{ll}
\mathcal{A}_{1;\varepsilon}(\theta) \, \triangleq \, \varepsilon{\text -}\displaystyle{
\operatornamewithlimits{\mbox{argmax}}_{1\leq i\leq k_1}
} \, g (\theta) =  \left\{ \, 1 \leq i \leq k_1 \, \mid \, \psi_{1,i}(\theta) \, \geq \, g(\theta) - \varepsilon \, \right\}, \\ [0.15in]
\mathcal{A}_{2;\varepsilon}(\theta) \, \triangleq \, \varepsilon{\text -}\displaystyle{
\operatornamewithlimits{\mbox{argmax}}_{1\leq i\leq k_2}
} \, h (\theta) =  \left\{ \, 1 \leq i \leq k_2 \, \mid \, \psi_{2,i}(\theta) \, \geq \, h(\theta) - \varepsilon \, \right\}.
\end{array}\right.
$$

To facilitate the solution of the MM subproblem, we further introduce auxiliary variables $r$ and $s$ in $\mathbb{R}$
to write the max functions in $g$ and $h$ as constraints, thus smoothing out the arguments in the functions
$\varphi^{\, \uparrow}$ and $\varphi^{\, \downarrow}$, and then regularize the added variables in the objective function.
Specifically, for any $\bar{\theta}\in \Theta$ and any $(i_1, i_2)\in \mathcal{A}_1( \bar{\theta} )\times \mathcal{A}_2( \bar{\theta} )$, define the convex set
$$
{\small \begin{array}{ll}
{\cal Z}_{(i_1,i_2)}( \bar{\theta} )  \triangleq  \left\{ \begin{array}{ll}
z \, \triangleq \, (\theta, r,s) \\ [5pt]
\in  \Theta\times \mathbb{R}\times \mathbb{R}
\end{array}
 \bigg|  \begin{array}{ll}
 \psi_{1,j}(\theta) - h( \bar{\theta} ) - \nabla \psi_{2,i_2}( \bar{\theta} )^T \left( \, \theta - \bar{\theta} \, \right)
\, \leq \, r,\; 1\leq j\leq k_1 \\ [5pt]
 g( \bar{\theta} ) + \nabla \psi_{1, i_1}( \bar{\theta} )^T \left( \, \theta - \bar{\theta} \, \right) - \psi_{2,j}(\theta) \, \geq \,s,
\; 1\leq j\leq k_2
\end{array}
\right\},
\end{array}}
$$
which must contain the triple $( \bar{\theta},r,s )$ for any pair of scalars $(r,s)$ satisfying $r \geq \psi(\bar{\theta}) \geq s$.
Since, $\varphi^{\, \uparrow}$ is non-decreasing and $\varphi^{\, \downarrow}$ is non-increasing, we have
\begin{equation} \label{eq:rs inequality}
\varphi^{\, \uparrow}(r) + \varphi^{\, \downarrow}(s) \, \geq \,
{\cal M}\Psi_{(i_1,i_2)}(\theta,\bar{\theta}) \, \geq \,
\Psi(\theta), \epc \mbox{for all $( \theta,r,s ) \in {\cal Z}_{(i_1,i_2)}( \bar{\theta} )$}.
\end{equation}
Moreover, equalities hold throughout (\ref{eq:rs inequality}) if $r = s = \psi(\theta)$.

With the above preparations, the MM-based algorithm is given below.

\noindent\makebox[\linewidth]{\rule{\textwidth}{1pt}}

\noindent The Nonmonotone Majorization-Minimization algorithm for solving \eqref{eq:optimization model2}.

\noindent\makebox[\linewidth]{\rule{\textwidth}{1pt}}

\noindent
{\bf Initialization.}  Given are positive scalars $c$ and $\varepsilon$ and an initial point $\theta^{\, 0} \in \Theta$.
Let $z^{\, 0} \triangleq ( \theta^{\, 0},r_0,s_0 )$ where $r_0 \triangleq \psi( \theta^{\, 0} ) \triangleq s_0$.
Set $\nu = 0$.\\[0.05in]
{\bf Step 1.}  For {\sl every} pair $(i_1^\nu,i_2^\nu)\in \mathcal{A}_{1;\varepsilon}(\theta^{\,\nu})\times \mathcal{A}_{2;\varepsilon}(\theta^{\,\nu})$, compute  \begin{equation}\label{MM:subproblem}
z^{\,\nu+\frac{1}{2};i_1^\nu,i_2^\nu} \, \triangleq \, \displaystyle{
\operatornamewithlimits{\mbox{argmin}}_{z \in {\cal Z}_{(i_1^\nu,i_2^\nu)}(\theta^{\, \nu})}
} \, \left\{\,\wh{\Psi}_c
(z,z^{\, \nu})\, \triangleq \,
\varphi^{\, \uparrow}(r) + \varphi^{\, \downarrow}(s) +
\displaystyle{
\frac{c}{2}
} \, \| \, z - z^{\, \nu} \, \|^2\,\right\}.
\end{equation}
\vskip -0.5em
\noindent
{\bf Step 2.} Set $z^{\,\nu+1} \triangleq  z^{\,\nu+\frac{1}{2};\wh{i}_1^{\,\nu},\wh{i}_2^{\,\nu}}$, where $(\,\wh{i}_1^{\,\nu}\,,\, \wh{i}_2^{\,\nu})$ is a minimizing index in
\[
\displaystyle{
\operatornamewithlimits{\mbox{argmin}}
} \left\{ \, \wh{\Psi}_c
(z^{{\nu+\frac{1}{2}; i_1^{\,\nu}, i_2^{\,\nu}}}, z^{\, \nu})\, \big| \,
(i_1^{\,\nu}, i_2^{\,\nu}) \, \in \, \mathcal{A}_{1;\varepsilon}(\theta^{\,\nu})\times \mathcal{A}_{2;\varepsilon}(\theta^{\,\nu}) \, \right\}.
\]
\vskip -1em
\noindent
{\bf Step 3.}   If $z^{\,\nu+1}$ satisfies a prescribed stopping rule, terminate; otherwise, return to Step 1 with $\nu$ replaced by $\nu+1$.
\hfill $\Box$

\noindent\makebox[\linewidth]{\rule{\textwidth}{1pt}}
\vskip 0.2cm

Because of the regularization of the variables $r$ and $s$, subproblem (\ref{MM:subproblem}) is a slight modification of the MM subproblem which would be:
\begin{equation} \label{eq:straightforward MM}
\displaystyle{
\operatornamewithlimits{\mbox{minimize}}_{z \in {\cal Z}_{(i_1^{\nu},i_2^{\nu})}(\theta^{\, \nu})}
} \ \wt{\Psi}_c
(z,\theta^{\, \nu}) \, \triangleq \,
\varphi^{\, \uparrow}(r) + \varphi^{\, \downarrow}(s) +
\displaystyle{
\frac{c}{2}
} \, \| \, \theta - \theta^{\, \nu} \, \|^2.
\end{equation}
The modification is needed when we discuss the solution of \eqref{MM:subproblem} that is based on a smoothness property of its Lagrangian dual.  Unlike (\ref{eq:straightforward MM}), a minimizing triple $( \theta,r,s )$ of (\ref{MM:subproblem})
does not necessarily satisfy
$r = g(\theta) - \left[ \, h(\theta^{\, \nu}) + \nabla \psi_{2,i_2^{\nu}}(\theta^{\, \nu})^T \left( \, \theta - \theta^{\, \nu} \, \right) \, \right]$ or
$s = \left[ \, g(\theta^{\, \nu}) + \nabla \psi_{1,i_1^{\nu}}(\theta^{\, \nu})^T \left( \, \theta - \theta^{\, \nu} \, \right) \, \right] - h(\theta)$ because of
the regularization of these variables.  In addition, the feasible set ${\cal Z}_{(i_1^{\nu},i_2^{\nu})}(\theta^{\, \nu})$
changes with the iterate $\theta^{\, \nu}$.  Due to all these anomalies, care is needed in the proof of convergence of the algorithm. In particular, the sequence $\{ \Psi(\theta^{\, \nu}) \}$ of objective values of the original function $\Psi$ to be minimized is not shown
to be decreasing; instead it is the substituted sequence $\left\{ \varphi^{\, \uparrow}( r_{\nu+1} ) + \varphi^{\, \downarrow}( s_{\nu+1} ) \right\}$
that is decreasing.   The term ``nonmonotone'' is employed to highlight this non-standard feature of the algorithm. We first give a necessary and sufficient condition for a vector $\bar{\theta}$ to be an optimal solution of the problem: $\displaystyle{
\operatornamewithlimits{\mbox{minimize}}_{\theta \in \Theta}
} \, {\cal M}\Psi_{(i_1,i_2)}(\theta,\bar{\theta})$, this being a key requirement in the  d-stationarity
of $\bar{\theta}$.

\begin{lemma} \label{lm:regularized M}
A vector  $\bar{\theta} \in \displaystyle{
\operatornamewithlimits{\mbox{argmin}}_{\theta \in \Theta}
} \, {\cal M}\Psi_{(i_1,i_2)}(\theta,\bar{\theta})$, where $( i_1,i_2 ) \in {\cal A}_1(\bar{\theta}) \times {\cal A}_2(\bar{\theta})$,
if and only if there exist a scalar $c > 0$ and a pair of scalars $( \bar{r},\bar{s} )$
such that the triple $\bar{z} \triangleq ( \bar{\theta},\bar{r},\bar{s} ) \in \displaystyle{
\operatornamewithlimits{\mbox{argmin}}_{z \in {\cal Z}_{(i_1,i_2)}(\bar{\theta})}
} \, \wh{\Psi}_c
(z,\bar{z})$.
\end{lemma}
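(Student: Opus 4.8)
The plan is to eliminate the quadratic regularizer from consideration by exploiting that its center $\bar{z}$ coincides with the very point whose optimality we are testing. First I would establish the following elementary fact: for any convex function $\phi$ on a convex set $\mathcal{Z}$, a point $\bar{z}\in\mathcal{Z}$ lies in $\operatornamewithlimits{\mbox{argmin}}_{z\in\mathcal{Z}}\{\phi(z)+\frac{c}{2}\|z-\bar{z}\|^2\}$ if and only if $\bar{z}\in\operatornamewithlimits{\mbox{argmin}}_{z\in\mathcal{Z}}\phi(z)$, and this equivalence holds for every $c>0$. The forward implication uses that $\frac{c}{2}\|z-\bar{z}\|^2\geq 0$ and vanishes at $z=\bar{z}$; the reverse follows by evaluating the regularized optimality inequality along the segment $z_t\triangleq(1-t)\bar{z}+tz\in\mathcal{Z}$, invoking convexity of $\phi$, dividing by $t$, and letting $t\downarrow 0$. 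Applying this with $\phi(z)\triangleq\varphi^{\, \uparrow}(r)+\varphi^{\, \downarrow}(s)$ (convex by the decomposition of Lemma~\ref{lemma: univariate function}) and the convex set $\mathcal{Z}_{(i_1,i_2)}(\bar{\theta})$ reduces the lemma to the single claim that $\bar{\theta}$ minimizes ${\cal M}\Psi_{(i_1,i_2)}(\cdot,\bar{\theta})$ over $\Theta$ if and only if some triple $(\bar{\theta},\bar{r},\bar{s})$ minimizes $\varphi^{\, \uparrow}(r)+\varphi^{\, \downarrow}(s)$ over $\mathcal{Z}_{(i_1,i_2)}(\bar{\theta})$.

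Next I would carry out a partial minimization over $(r,s)$ for each fixed $\theta$. The defining constraints of $\mathcal{Z}_{(i_1,i_2)}(\bar{\theta})$ force $r\geq g(\theta)-[h(\bar{\theta})+\nabla\psi_{2,i_2}(\bar{\theta})^T(\theta-\bar{\theta})]$ and $s\leq[g(\bar{\theta})+\nabla\psi_{1,i_1}(\bar{\theta})^T(\theta-\bar{\theta})]-h(\theta)$; since $\varphi^{\, \uparrow}$ is nondecreasing and $\varphi^{\, \downarrow}$ is nonincreasing, the partial minimum of $\varphi^{\, \uparrow}(r)+\varphi^{\, \downarrow}(s)$ is attained at these two boundary values and equals precisely ${\cal M}\Psi_{(i_1,i_2)}(\theta,\bar{\theta})$ — this is exactly the content of the bound \eqref{eq:rs inequality}. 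Hence the joint minimum of $\varphi^{\, \uparrow}(r)+\varphi^{\, \downarrow}(s)$ over $\mathcal{Z}_{(i_1,i_2)}(\bar{\theta})$ equals $\min_{\theta\in\Theta}{\cal M}\Psi_{(i_1,i_2)}(\theta,\bar{\theta})$, and a feasible triple attains this joint minimum if and only if its $\theta$-component minimizes ${\cal M}\Psi_{(i_1,i_2)}(\cdot,\bar{\theta})$ while its $(r,s)$-pair attains the corresponding partial minimum.

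With these two ingredients the two directions assemble quickly. For the ``only if'' direction, assuming $\bar{\theta}$ minimizes ${\cal M}\Psi_{(i_1,i_2)}(\cdot,\bar{\theta})$, I would set $\bar{r}=\bar{s}=\psi(\bar{\theta})$; since the gradient terms vanish at $\theta=\bar{\theta}$, these are exactly the two boundary values, so $(\bar{\theta},\bar{r},\bar{s})$ realizes the joint minimum of $\varphi^{\, \uparrow}(r)+\varphi^{\, \downarrow}(s)$ over $\mathcal{Z}_{(i_1,i_2)}(\bar{\theta})$, and the forward implication of the first step then gives $\bar{z}\in\operatornamewithlimits{\mbox{argmin}}_{z\in\mathcal{Z}_{(i_1,i_2)}(\bar{\theta})}\wh{\Psi}_c(z,\bar{z})$ for every $c>0$. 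For the ``if'' direction, any minimizer $\bar{z}=(\bar{\theta},\bar{r},\bar{s})$ of $\wh{\Psi}_c(\cdot,\bar{z})$ is, by the reverse implication of the first step, a minimizer of $\varphi^{\, \uparrow}(r)+\varphi^{\, \downarrow}(s)$ over $\mathcal{Z}_{(i_1,i_2)}(\bar{\theta})$, whence its $\theta$-component $\bar{\theta}$ minimizes ${\cal M}\Psi_{(i_1,i_2)}(\cdot,\bar{\theta})$. I expect the main obstacle to be the clean formulation of the first step — recognizing that a proximal term centered at the candidate point leaves its minimizer status untouched and thereby renders the scalar $c$ immaterial; once that is in place, the partial-minimization bookkeeping of the second step is routine given the monotonicity of $\varphi^{\, \uparrow},\varphi^{\, \downarrow}$ and the established inequality \eqref{eq:rs inequality}.
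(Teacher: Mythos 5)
Your proposal is correct and follows essentially the same route as the paper's proof: both arguments turn on the facts that the proximal term centered at the candidate point $\bar{z}$ is immaterial to its minimizer status, that inequality \eqref{eq:rs inequality} together with the monotonicity of $\varphi^{\,\uparrow}$ and $\varphi^{\,\downarrow}$ identifies the partial minimum over $(r,s)$ with ${\cal M}\Psi_{(i_1,i_2)}(\theta,\bar{\theta})$, and that the choice $\bar{r}=\bar{s}=\psi(\bar{\theta})$ makes the chain of inequalities tight. If anything, your segment argument in the first step supplies an explicit justification for a step the paper asserts without proof (that a minimizer of $\wh{\Psi}_c(\cdot,\bar{z})$ over ${\cal Z}_{(i_1,i_2)}(\bar{\theta})$ also minimizes $\varphi^{\,\uparrow}(r)+\varphi^{\,\downarrow}(s)$ there); the only blemish is that your labels \emph{forward} and \emph{reverse} for the two implications of that preliminary equivalence are interchanged, which is purely cosmetic.
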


\begin{proof}
 ``Only if.''  Suppose  $\bar{\theta} \in \displaystyle{
\operatornamewithlimits{\mbox{argmin}}_{\theta \in \Theta}
} \, {\cal M}\Psi_{(i_1, i_2)}(\theta,\bar{\theta})$.  Let $c > 0$ be arbitrary.  It then follows that
$\bar{\theta} \in \displaystyle{
\operatornamewithlimits{\mbox{argmin}}_{\theta \in \Theta}
} \, \left[ \, {\cal M}\Psi_{(i_1, i_2)}(\theta,\bar{\theta}) + \displaystyle{
\frac{c}{2}
} \, \| \, \theta - \bar{\theta} \, \|^2 \, \right]$.
Define
$\bar{r} \triangleq \psi(\bar{\theta}) \triangleq \bar{s}$.  Then $\bar{z} \triangleq ( \bar{\theta},\bar{r},\bar{s} ) \in {\cal Z}_{(i_1,i_2)}(\bar{\theta})$.
Let $z \triangleq ( \theta,r,s ) \in {\cal Z}_{(i_1,i_2)}(\bar{\theta})$ be arbitrary.   we have
 $$ \begin{array}{lll}
\wh{\Psi}_c
(z,\bar{z}) & = &
\varphi^{\, \uparrow}(r) + \varphi^{\, \downarrow}(s) + \displaystyle{
\frac{c}{2}
} \, \| \, z - \bar{z} \, \|^2 \\[0.1in]
& \geq & {\cal M}\Psi_{(i_1,i_2)}(\theta,\bar{\theta}) + \displaystyle{
\frac{c}{2}
} \, \| \, \theta - \bar{\theta} \, \|^2 \epc \mbox{by (\ref{eq:rs inequality})} \\ [0.1in]
& \geq & {\cal M}\Psi_{(i_1,i_2)}(\bar{\theta},\bar{\theta}) \epc \mbox{by assumption on $\bar{\theta}$} \\ [0.1in]
& = & \varphi^{\, \uparrow}( \bar{r} ) + \varphi^{\, \downarrow}( \bar{s} )
 =  \wh{\Psi}_c
 (\bar{z},\bar{z}) \epc \mbox{since $\bar{r} = \psi(\bar{\theta}) = \bar{s}$}.
\end{array}
$$
``If.''  Conversely, suppose $\bar{z} \triangleq ( \bar{\theta},\bar{r},\bar{s} ) \in \displaystyle{
\operatornamewithlimits{\mbox{argmin}}_{z \in {\cal Z}_{(i_1,i_2)}(\bar{\theta})}
} \, \wh{\Psi}_c
(z,\bar{z})$ for some scalar $c > 0$.  It follows that
$\bar{z} \triangleq ( \bar{\theta},\bar{r},\bar{s} ) \in \displaystyle{
\operatornamewithlimits{\mbox{argmin}}_{z \in {\cal Z}_{(i_1,i_2)}(\bar{\theta})}
} \, \left[ \, \varphi^{\, \uparrow}(r) + \varphi^{\, \downarrow}(s) \, \right]$, which implies in particular that
$\bar{r} \geq \psi(\bar{\theta}) \geq \bar{s}$.  Hence
$( \bar{\theta},\psi(\bar{\theta}),\psi(\bar{\theta}) ) \in \displaystyle{
\operatornamewithlimits{\mbox{argmin}}_{z \in {\cal Z}_{(i_1,i_2)}(\bar{\theta})}
} \, \left[ \, \varphi^{\, \uparrow}(r) + \varphi^{\, \downarrow}(s) \, \right]$ and
$\varphi^{\, \uparrow}( \bar{r} ) + \varphi^{\, \downarrow}(\bar{s}) = \Psi(\bar{\theta}) = {\cal M}\Psi_{(i_1,i_2)}(\bar{\theta},\bar{\theta})$.
Let $\theta \in \Theta$ be arbitrary.  Define
\[
r \, \triangleq \, g(\theta) - \left[ \, h(\bar{\theta}) + \nabla \psi_{2,i_2}(\bar{\theta})^T \left( \, \theta - \bar{\theta} \, \right) \, \right],\epc
s \, \triangleq \, \left[ \, g(\bar{\theta}) + \nabla \psi_{1,i_1}(\bar{\theta})^T \left( \, \theta - \bar{\theta} \, \right) \, \right] - h(\theta).
\]
Then $( \theta,r,s ) \in {\cal Z}_{(i_1,i_2)}(\bar{\theta})$ and
$\varphi^{\, \uparrow}(r) + \varphi^{\, \downarrow}(s) = {\cal M}\Psi_{(i_1,i_2)}(\theta,\bar{\theta})$.
It therefore follows that ${\cal M}\Psi_{(i_1,i_2)}(\theta,\bar{\theta}) \geq {\cal M}\Psi_{(i_1,i_2)}(\bar{\theta},\bar{\theta})$,
establishing  that $\bar{\theta} \in \displaystyle{
\operatornamewithlimits{\mbox{argmin}}_{\theta \in \Theta}
} \, {\cal M}\Psi_{(i_1, i_2)}(\theta,\bar{\theta})$.
\end{proof}

Now we present the main result of this section on the subsequential convergence of $\{\theta^{\,\nu}\}$  to a d-stationary point
of~\eqref{eq:optimization model2}, which generalizes the result in \cite[Proposition 6]{PangRazaviyaynAlvarado16} for the dc algorithm to solve dc programs.

\begin{proposition}\label{proposition: d-stationary}
Suppose that $\Psi$ is bounded below on the closed convex set $\Theta$.
Let $\left\{ z^{\, \nu} = ( \theta^{\, \nu}, r_\nu, s_\nu) \right\}$ be a sequence generated by the MM algorithm.
The following four statements hold.\\
{\rm (a)} For any
$(i_1^{\,\nu}, i_2^{\,\nu})\in \mathcal{A}_{1;\varepsilon}(\theta^{\,\nu})\times \mathcal{A}_{2;\varepsilon}(\theta^{\,\nu}) $ and
$z \in {\cal Z}_{(i_1^{\,\nu},i_2^{\,\nu})}(\theta^{\, \nu})$,    it holds that
$$\varphi^{\,\uparrow}(r_{\nu+1})+\varphi^{\,\downarrow}(s_{\nu+1})+ \displaystyle\frac{c}{2}\,\|\,z^{\,\nu+1} - z^{\,\nu}\,\|^2 \, \leq \,\varphi^{\, \uparrow}(r) +
\varphi^{\, \downarrow}(s) + \, \displaystyle{
\frac{c}{2}
} \, \| \, z - z^{\, \nu} \, \|^2.$$\\[-0.2in]
{\rm (b)} $\displaystyle\lim_{\nu\to\infty}\|\,z^{\,\nu+1} - z^{\,\nu}\,\|\,=\,0$.\\[0.05in]
{\rm (c)} For any accumulation point $( \theta^{\, \infty},r_{\infty},s_{\infty} )$ of $\{z^{\,\nu}\}$, if it exists,
$\theta^{\, \infty}$ is a d-stationary point of \eqref{eq:optimization model2}.\\[0.05in]
{\rm (d)} If the set $\{ \theta \in \Theta \mid \Psi(\theta) \leq \Psi(\theta^{\, 0}) \}$ is bounded, then the
sequence $\{ \theta^{\, \nu} \}$ is bounded. If in addition, the set $\{ t \mid \varphi(t) \leq \Psi(\theta^{\, 0}) \}$
is bounded, then the sequence $\{ ( r_{\nu},s_{\nu} ) \}$ is also bounded.
\end{proposition}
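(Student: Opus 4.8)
This falls directly out of the two-stage minimization in Steps 1--2 and needs no analysis. For a fixed admissible pair $(i_1^\nu,i_2^\nu)$ the half-step iterate $z^{\,\nu+\frac12;i_1^\nu,i_2^\nu}$ minimizes $\wh{\Psi}_c(\cdot,z^{\,\nu})$ over ${\cal Z}_{(i_1^\nu,i_2^\nu)}(\theta^{\,\nu})$, so $\wh{\Psi}_c(z^{\,\nu+\frac12;i_1^\nu,i_2^\nu},z^{\,\nu})\le \varphi^{\,\uparrow}(r)+\varphi^{\,\downarrow}(s)+\frac c2\|z-z^{\,\nu}\|^2$ for every $z\in{\cal Z}_{(i_1^\nu,i_2^\nu)}(\theta^{\,\nu})$; and Step 2 selects the pair with the smallest half-step value, giving $\wh{\Psi}_c(z^{\,\nu+1},z^{\,\nu})\le \wh{\Psi}_c(z^{\,\nu+\frac12;i_1^\nu,i_2^\nu},z^{\,\nu})$. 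Chaining these and reading off $\wh{\Psi}_c(z^{\,\nu+1},z^{\,\nu})=\varphi^{\,\uparrow}(r_{\nu+1})+\varphi^{\,\downarrow}(s_{\nu+1})+\frac c2\|z^{\,\nu+1}-z^{\,\nu}\|^2$ yields (a).

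\textbf{Part (b).} The plan is a sufficient-decrease/telescoping argument on the substituted sequence $V_\nu\triangleq\varphi^{\,\uparrow}(r_\nu)+\varphi^{\,\downarrow}(s_\nu)$. Setting $z=z^{\,\nu}$ in (a) gives the decrease $V_{\nu+1}+\frac c2\|z^{\,\nu+1}-z^{\,\nu}\|^2\le V_\nu$, \emph{provided} $z^{\,\nu}$ is admissible in the subproblem at $\nu$; since reference and point coincide there, admissibility is exactly $r_\nu\ge\psi(\theta^{\,\nu})\ge s_\nu$. For the lower bound, each $z^{\,\nu}\in{\cal Z}_{(\wh{i}_1^{\,\nu-1},\wh{i}_2^{\,\nu-1})}(\theta^{\,\nu-1})$, so the first inequality of \eqref{eq:rs inequality} gives $V_\nu\ge\Psi(\theta^{\,\nu})\ge\inf_\Theta\Psi>-\infty$. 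Summing the decrease telescopes to $\frac c2\sum_\nu\|z^{\,\nu+1}-z^{\,\nu}\|^2\le V_0-\inf_\nu V_\nu<\infty$, whence $\|z^{\,\nu+1}-z^{\,\nu}\|\to0$. The main obstacle here is precisely the admissibility $r_\nu\ge\psi(\theta^{\,\nu})\ge s_\nu$: because Step 1 ranges over the $\varepsilon$-enlarged sets ${\cal A}_{1;\varepsilon},{\cal A}_{2;\varepsilon}$ rather than the exact argmax, the defining constraints of ${\cal Z}$ only force $r_\nu\ge\psi(\theta^{\,\nu})-\varepsilon$, so I would either track this feasibility carefully along the iterates or replace $z^{\,\nu}$ by the always-feasible surrogate $(\theta^{\,\nu},\psi(\theta^{\,\nu}),\psi(\theta^{\,\nu}))$ and absorb the resulting lower-order terms.

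\textbf{Part (c).} This is the heart, and the delicate point is that the regularization weight $c$ does \emph{not} vanish in the limit, so one cannot simply read off minimization of the surrogate by sending $c\to0$; instead I would route everything through Lemma~\ref{lm:regularized M}. Fix an accumulation point $z^{\,\infty}=(\theta^{\,\infty},r_\infty,s_\infty)$ with $z^{\,\nu_k}\to z^{\,\infty}$; by (b), $z^{\,\nu_k+1}\to z^{\,\infty}$ as well, so $V_{\nu_k+1}\to\varphi^{\,\uparrow}(r_\infty)+\varphi^{\,\downarrow}(s_\infty)=\wh{\Psi}_c(z^{\,\infty},z^{\,\infty})$. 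The payoff of the $\varepsilon$-enlargement appears now: for any \emph{exact} pair $(i_1,i_2)\in{\cal A}_1(\theta^{\,\infty})\times{\cal A}_2(\theta^{\,\infty})$, continuity gives $i_1\in{\cal A}_{1;\varepsilon}(\theta^{\,\nu_k})$ and $i_2\in{\cal A}_{2;\varepsilon}(\theta^{\,\nu_k})$ for all large $k$, so (a) is applicable at $\nu_k$ with this pair. For an arbitrary $z=(\theta,r,s)\in{\cal Z}_{(i_1,i_2)}(\theta^{\,\infty})$ I would construct feasible comparison points $z^{(k)}\in{\cal Z}_{(i_1,i_2)}(\theta^{\,\nu_k})$ converging to $z$, by clamping the two scalars to the moving linearized constraints, and pass (a) to the limit to obtain $\wh{\Psi}_c(z^{\,\infty},z^{\,\infty})\le\wh{\Psi}_c(z,z^{\,\infty})$ for every such $z$. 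This is exactly $z^{\,\infty}\in\operatornamewithlimits{\mbox{argmin}}_{z\in{\cal Z}_{(i_1,i_2)}(\theta^{\,\infty})}\wh{\Psi}_c(z,z^{\,\infty})$, so Lemma~\ref{lm:regularized M} gives $\theta^{\,\infty}\in\operatornamewithlimits{\mbox{argmin}}_{\theta\in\Theta}{\cal M}\Psi_{(i_1,i_2)}(\theta,\theta^{\,\infty})$, and since this holds for every exact pair, Lemma~\ref{lemma: d-stationary} yields d-stationarity of $\theta^{\,\infty}$.

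\textbf{Part (d).} With the descent of $V_\nu$ from (b) and $V_0=\Psi(\theta^{\,0})$ (as $r_0=s_0=\psi(\theta^{\,0})$), the sandwich $\Psi(\theta^{\,\nu})\le V_\nu\le V_0=\Psi(\theta^{\,0})$ places every iterate in the sublevel set $\{\theta\in\Theta\mid\Psi(\theta)\le\Psi(\theta^{\,0})\}$, so its boundedness forces $\{\theta^{\,\nu}\}$ bounded. For $\{(r_\nu,s_\nu)\}$ I would combine $\varphi^{\,\uparrow}(r_\nu)\le V_\nu\le\Psi(\theta^{\,0})$ and $\varphi^{\,\downarrow}(s_\nu)\le V_\nu\le\Psi(\theta^{\,0})$ with $r_\nu\ge\psi(\theta^{\,\nu})\ge s_\nu$ to confine $r_\nu$ and $s_\nu$ to a bounded sublevel set of $\varphi$, invoking the hypothesis that $\{t\mid\varphi(t)\le\Psi(\theta^{\,0})\}$ is bounded.
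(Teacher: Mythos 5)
Your outline follows the paper's proof essentially step for step: (a) is the same two-stage comparison of the half-step and full-step minimizers; (b) is the same telescoping of $V_\nu \triangleq \varphi^{\,\uparrow}(r_\nu)+\varphi^{\,\downarrow}(s_\nu)$; (c) routes through Lemma~\ref{lm:regularized M} and Lemma~\ref{lemma: d-stationary} exactly as the paper does, with the same perturbed comparison points in ${\cal Z}_{(i_1,i_2)}(\theta^{\,\nu_k})$ and the same observation that ${\cal A}_i(\theta^{\,\infty})\subseteq {\cal A}_{i;\varepsilon}(\theta^{\,\nu_k})$ for large $k$; (d) is the same sandwich $\Psi(\theta^{\,\nu})\le V_\nu\le V_0=\Psi(\theta^{\,0})$. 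Two execution points are left open.

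First, in (b) you correctly isolate the crux --- the descent inequality needs $r_\nu\ge\psi(\theta^{\,\nu})\ge s_\nu$ so that $z^{\,\nu}$ is feasible for the subproblem at step $\nu$ --- but you do not close it. The paper closes it by induction using the gradient inequality: from $z^{\,\nu+1}\in{\cal Z}_{(i_1^\nu,i_2^\nu)}(\theta^{\,\nu})$,
$$r_{\nu+1}\,\ge\, g(\theta^{\,\nu+1})-\bigl[h(\theta^{\,\nu})+\nabla\psi_{2,i_2^\nu}(\theta^{\,\nu})^T(\theta^{\,\nu+1}-\theta^{\,\nu})\bigr]\,\ge\, g(\theta^{\,\nu+1})-\psi_{2,i_2^\nu}(\theta^{\,\nu+1})\,\ge\,\psi(\theta^{\,\nu+1}),$$
and symmetrically for $s_{\nu+1}$. (Your worry about the $\varepsilon$-enlargement is not idle: the middle inequality uses $h(\theta^{\,\nu})\le\psi_{2,i_2^\nu}(\theta^{\,\nu})$, which is an equality only for exact maximizers, so for a merely $\varepsilon$-maximizing $i_2^\nu$ the chain as written only yields $r_{\nu+1}\ge\psi(\theta^{\,\nu+1})-\varepsilon$.) Of your two proposed fixes, the surrogate $(\theta^{\,\nu},\psi(\theta^{\,\nu}),\psi(\theta^{\,\nu}))$ does not work as stated: plugging it into (a) leaves the extra term $\tfrac{c}{2}\bigl[(r_\nu-\psi(\theta^{\,\nu}))^2+(s_\nu-\psi(\theta^{\,\nu}))^2\bigr]$ on the right, which is not summable a priori, so the telescoping and hence (b) are lost. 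You really do need feasibility of $z^{\,\nu}$ itself, established along the lines above.

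Second, in (d) the inequalities $\varphi^{\,\uparrow}(r_\nu)\le V_\nu$ and $\varphi^{\,\downarrow}(s_\nu)\le V_\nu$ need not hold (they require the complementary piece to be nonnegative), and even if they did they would not place $r_\nu,s_\nu$ in the sublevel set of $\varphi$, which is what the hypothesis bounds. The correct step, as in the paper, is $\varphi(r_\nu)=\varphi^{\,\uparrow}(r_\nu)+\varphi^{\,\downarrow}(r_\nu)\le\varphi^{\,\uparrow}(r_\nu)+\varphi^{\,\downarrow}(s_\nu)=V_\nu$, using $r_\nu\ge s_\nu$ and the monotonicity of $\varphi^{\,\downarrow}$, and symmetrically $\varphi(s_\nu)\le V_\nu$; both scalars then lie in $\{t\mid\varphi(t)\le\Psi(\theta^{\,0})\}$.
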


\begin{proof}
(a) For any $(i_1^{\,\nu}, i_2^{\,\nu})\in \mathcal{A}_{1;\varepsilon}(\theta^{\,\nu})\times \mathcal{A}_{2;\varepsilon}(\theta^{\,\nu}) $ and $z \in {\cal Z}_{(i_1^{\,\nu},i_2^{\,\nu})}(\theta^{\, \nu})$, we derive
$$\begin{array}{rl}
&\varphi^{\, \uparrow}(r) +
\varphi^{\, \downarrow}(s) + \, \displaystyle{
\frac{c}{2}
} \, \| \, z - z^{\, \nu} \, \|^2\\[0.1in]
  \geq & \, \varphi^{\,\uparrow}\left(r_{\nu+\frac{1}{2}; i_1^{\,\nu}, i_2^{\,\nu}}\right)\,+ \,\varphi^{\,\downarrow}\left( s_{\nu+\frac{1}{2}; i_1^{\,\nu}, i_2^{\,\nu}}\right)   +\,\displaystyle\frac{c}{2}\,\|\,z^{\,\nu+\frac{1}{2}; i_1^{\,\nu}, i_2^{\,\nu}} - z^{\,\nu}\,\|^2 \\[0.1in]

\epc \geq &\, \varphi^{\,\uparrow}(r_{\nu+1})+\varphi^{\,\downarrow}(s_{\nu+1}) + \displaystyle\frac{c}{2}\,\|\,z^{\,\nu+1} - z^{\,\nu}\,\|^2,
\end{array}
$$
\vskip -0.3cm
\noindent
where the two inequalities are due to the definitions of $z^{\,\nu+\frac{1}{2};\,i_1^{\,\nu},\,i_2^{\,\nu}}$ and  ${z}^{\,\nu+1}$.\\
(b) By the strong convexity of $\wh{\Psi}_c
(\bullet,z^{\, \nu})$,
it follows that $z^{\,\nu+\frac{1}{2};i_1^\nu, i_2^\nu}$ is the unique minimizer of (\ref{MM:subproblem}).  We claim
that $z^{\,\nu} \in {\cal Z}_{(i_1,i_2)}(\theta^{\, \nu})$
for all $\nu$ and all $(i_1,i_2)\in \{1, \ldots,k_1\}\times \{1,\ldots, k_2\}$.  Indeed, this is clearly true for $\nu = 0$ by the choice of $( r_0,s_0 )$.
For any $\nu \geq 0$, since
$z^{\, \nu+1} \in {\cal Z}_{(i_1^{\nu},i_2^{\nu})}(\theta^{\, \nu})$ for some
$(i_1^{\nu}, i_2^{\nu})\in \mathcal{A}_{1;\varepsilon}(\theta^{\,\nu})\times \mathcal{A}_{2;\varepsilon}(\theta^{\,\nu})$, we have
\[ \begin{array}{lll}
r_{\nu+1} & \geq & g(\theta^{\, \nu+1}) - \left[ \, h(\theta^{\, \nu}) + \nabla \psi_{2,i_2^{\nu}}(\theta^{\, \nu})^T
\left( \, \theta^{\, \nu+1} - \theta^{\, \nu} \, \right) \, \right] \\ [0.1in]
& \geq & g(\theta^{\, \nu+1}) - \psi_{2,i_2^{\nu}}(\theta^{\, \nu+1})
\, \geq \, g(\theta^{\, \nu+1}) - h(\theta^{\, \nu+1}) \, = \, \psi(\theta^{\, \nu+1}).
\end{array} \]
Similarly, we can deduce $s_{\nu+1} \leq \psi(\theta^{\, \nu+1})$.  Consequently, the claim holds.
It then follows from (a) that
\begin{equation}\label{ineq:decreaseMM}
\varphi^{\,\uparrow}(r_{\nu+1})+\varphi^{\,\downarrow}(s_{\nu+1})+ \displaystyle\frac{c}{2}\,\|\,z^{\,\nu+1} - z^{\,\nu}\,\|^2\, \leq\, \varphi^{\, \uparrow}(r_\nu) +
\varphi^{\, \downarrow}(s_\nu), \epc \forall \, \nu.
\end{equation}
Thus the sequence $\left\{ \varphi^{\, \uparrow}(r_{\nu}) + \varphi^{\, \downarrow}(s_{\nu}) \right\}$ is non-increasing.  Since
$\varphi^{\, \uparrow}(r_{\nu}) + \varphi^{\, \downarrow}(s_{\nu}) \geq \Psi(\theta^{\, \nu})$ and $\Psi$ is bounded below on $\Theta$,
it follows that the sequence $\left\{ \varphi^{\, \uparrow}(r_{\nu}) + \varphi^{\, \downarrow}(s_{\nu}) \right\}$ converges.
Hence by (\ref{ineq:decreaseMM}), the sequence $\left\{ \| z^{\nu+1} - z^{\nu} \| \right\}$ converges to zero.\\
(c) Let $z^{\, \infty} \triangleq ( \theta^{\, \infty},r_{\infty},s_{\infty} )$
be the limit of a convergent subsequence $\left\{ z^{\, \nu} \right\}_{\nu \in \kappa}$.   We must have
$\theta^{\, \infty} \in \Theta$.
To prove that $\theta^{\,\infty}$ is a d-stationary point, it suffices to show, by Lemma \ref{lemma: d-stationary} and Lemma \ref{lm:regularized M}, that
$z^{\, \infty}$ belongs to $\displaystyle{
\operatornamewithlimits{\mbox{argmin}}_{z \in {\cal Z}_{({i}_1,{i}_2)}(\theta^{\, \infty})}
} \, \wh{\Psi}_c
(z,z^{\, \infty})$ for all $(i_1, i_2) \in \mathcal{A}_1(\theta^\infty)\times \mathcal{A}_2(\theta^\infty)$.
Consider any $(i_1, i_2)\in \mathcal{A}_1(\theta^{\,\infty})\times \mathcal{A}_2(\theta^{\,\infty})$ and any $z\in {\cal Z}_{({i}_1,{i}_2)}(\theta^{\, \infty})$.
Define
{\small $$\begin{array}{lll}
\wh{r}_{\nu}  \triangleq  \,r + \left| \, \left[ \, h(\theta^{\, \nu}) + \nabla \psi_{2,{i}_2}(\theta^{\, \nu})^T \left( \, \theta - \theta^{\, \nu} \, \right) \, \right] -
\left[ \, h(\theta^{\, \infty}) + \nabla \psi_{2,{i}_2}(\theta^{\, \infty})^T \left( \, \theta - \theta^{\, \infty} \, \right) \, \right] \right| \\ [0.1in]
\wh{s}_{\nu}  \triangleq \, s + \left| \, \left[ \, g(\theta^{\, \nu}) + \nabla \psi_{1,{i}_1}(\theta^{\, \nu})^T \left( \, \theta - \theta^{\, \nu} \, \right) \, \right] -
\left[ \, g(\theta^{\, \infty}) + \nabla \psi_{1,{i}_1}(\theta^{\, \infty})^T \left( \, \theta - \theta^{\, \infty} \, \right) \, \right] \right| .
\end{array}
$$}
We then have $\wh{z}^{\, \nu} \triangleq ( \theta,\wh{r}_{\nu},\wh{s}_{\nu} ) \in {\cal Z}_{({i}_1,{i}_2)}(\theta^{\, \nu})$ and
$\displaystyle{
\lim_{\nu (\in \kappa) \to \infty}
} \, ( \, \wh{r}_{\nu},\wh{s}_{\nu} \, ) = ( r,s )$.
One can easily show that for all $\nu\in \kappa$ sufficiently large,
$\mathcal{A}_1(\theta^{\,\infty})\times \mathcal{A}_2(\theta^{\,\infty}) \subseteq \mathcal{A}_{1;\varepsilon}(\theta^{\,\nu})
\times \mathcal{A}_{2;\varepsilon}(\theta^{\,\nu})$, which in turn implies $(i_1, i_2) \in
\mathcal{A}_{1;\varepsilon}(\theta^{\,\nu})\times \mathcal{A}_{2;\varepsilon}(\theta^{\,\nu})$ for all such $\nu$.
It therefore follows from (a) that
\[
\varphi^{\, \uparrow}( r_{\nu+1} ) + \varphi^{\, \downarrow}( s_{\nu+1} ) + \displaystyle{
\frac{c}{2}
} \, \| \, z^{\, \nu+1} - z^{\, \nu} \, \|^2 \, \leq \, \varphi^{\, \uparrow}( \wh{r}_{\nu} ) + \varphi^{\, \downarrow}( \wh{s}_{\nu} ) + \displaystyle{
\frac{c}{2}
} \, \| \, \wh{z}^{\, \nu} - z^{\, \nu} \, \|^2 .
\]
Passing to the limit $\nu (\in \kappa) \to \infty$, we obtain $z^{\, \infty} \in \displaystyle{
\operatornamewithlimits{\mbox{argmin}}_{z \in {\cal Z}_{({i}_1,{i}_2)}(\theta^{\, \infty})}
} \, \wh{\Psi}_c
(z,z^{\, \infty})$.
This completes the proof of statement (c).\\
(d) Since $\Psi(\theta^{\, \nu}) \leq \varphi^{\, \uparrow}(r_{\nu}) + \varphi^{\, \downarrow}(s_{\nu}) \leq
\varphi^{\, \uparrow}(r_0) + \varphi^{\, \downarrow}(s_0) = \Psi(\theta^{\, 0})$
for all $\nu$, it follows that the sequence $\{ \theta^{\, \nu} \}$ is bounded by assumption.  Since $r_{\nu} \geq s_{\nu}$,
it follows that
\[
\varphi^{\, \uparrow}(r_{\nu}) + \varphi^{\, \downarrow}(s_{\nu}) \, \geq \, \max\left( \, \varphi(s_{\nu}),\varphi(r_{\nu}) \, \right),
\]
the boundedness of $\{ ( r_{\nu},s_{\nu} ) \}$ follows similarly.
\end{proof}
\gap

One may further establish the convergence of the full sequence $\{z^{\,\nu}\}$ under an isolatedness assumption of an accumulation point
as in the general theory of sequential convergence \cite[Proposition 8.3.10]{FacchineiPang2003} or by invoking the
Kurdyka-{\L}ojaziewicz (KL) theory of semi-analytic functions as in \cite{AttouchBolte2009, AttouchBolteSvaiter2013, BoltePauwels2016, BolteST14},
Recall that
a proper lower semi-continuous function  $f:\mathbb{R}^n\to (-\infty, +\infty]$
is said to have the KL property at $\bar{x}\in {\rm dom}\,(\partial\, f) \triangleq \{\,x \in {\rm dom}\,f \,\mid \, \partial\, f(x)\neq  \emptyset\,\}$ if
there exist $\alpha>0$, a neighborhood $\mathcal{N}$ of $\bar{x}$ and a continuous concave function $\phi: [\,0\,,\, \alpha\,)\to \mathbb{R}_+$ such that\\
$\bullet $ $\phi(0) = 0$ and  $\phi$ is continuously differentiable on $(0, \alpha)$ with $\phi^{\, \prime} > 0$; and\\
$\bullet $ $\phi^{\prime}(f(x)- f(\bar{x})) \, {\rm dist}\,(0\,,\, \partial f(x)) \geq  1$ for any $x\in \mathcal{N}$ with $f(\bar{x})< f(x)<f(\bar{x})+\alpha$,
where $\mbox{dist}\,(x,C) \,\triangleq \displaystyle\operatornamewithlimits{minimize}_{y\in C} \,\|x - y\|$ is
the distance from a point $x\in \mathbb{R}^n$ to a nonempty closed set $C\subseteq \mathbb{R}^n$.
If the function $\phi$ in the above definition can be chosen as $\phi(s) = \gamma \,s^{1-\beta}$
 for some scalars $\gamma > 0$ and $\beta \in  [0, 1)$, 
 we say that $f$ has the KL property at $\bar{x}$ with an exponent $\beta$.
It is known that a proper closed semi-algebraic function is a KL function \cite{BolteDaniilidisLewis2006}.

To proceed, we introduce the following function of $z \,\triangleq\, (\theta, r,s)$:
$$
\overline\Psi(z) \,\triangleq\,
\left\{\begin{array}{ll}
\varphi^{\,\uparrow}(r) + \varphi^{\,\downarrow}(s), & \mbox{if}\; z\in \overline{Z}\,\triangleq\,\{(\theta, r, s)\in \Theta\times \mathbb{R}\times \mathbb{R}\mid
s\leq \psi({\theta})\leq r\}, \\[0.05in]
+\infty & \mbox{otherwise}
\end{array}\right.
$$
and also define the set
$Z^{\,\infty} \, \triangleq \, \Theta^{\,\infty} \, \times \, R_{\,\infty} \, \times \, S_{\,\infty}$
of all accumulation points of the sequence $\{ z^{\,\nu} \}$ produced by the nonmonotone MM algorithm.  This set is nonempty under condition (d)
of Proposition~\ref{proposition: d-stationary}; moreover, every one of its elements is a d-stationary solution of
problem~\eqref{eq:optimization model2}.

\begin{proposition}\label{proposition: full convergence}
Suppose that $\Psi$ is bounded below on the closed convex set $\Theta$.
Then the whole sequence $\{z^{\,\nu}\}$ converges to an unique element of the set $Z^{\, \infty}$
under either one of the following two conditions (a) or (b);\\[0.05in]
(a) $Z^{\,\infty}$ contains an isolated element;\\[0.05in]
(b) $\{z^{\,\nu}\}$ is bounded; the index sets
$\mathcal{A}_{1;2\varepsilon}(\theta)$ and $\mathcal{A}_{2;2\varepsilon}(\theta)$ are singletons for all $\theta\in \Theta^{\,\infty}$;
$\nabla \psi_{1,i_1}$ and $\nabla \psi_{2,i_2}$ are locally Lipschitz continuous near all $\theta\in \Theta^{\,\infty}$ for
all $(i_1,i_2)\in \mathcal{A}_{1;2\varepsilon}(\theta)\times \mathcal{A}_{2;2\varepsilon}(\theta)$, respectively;
the functions $\varphi^{\,\uparrow}$ and $\varphi^{\,\downarrow}$ are differentiable with Lipschitz continuous
gradients near all $r \in R_{\,\infty}$ and $s \in S_{\,\infty}$, respectively; moreover,
the function $\overline\Psi$ satisfies the KL property at all $z^{\,\infty}\in Z^{\,\infty}$.\\[0.05in]
If $\{z^k\}$ converges to $z^{\,\infty}\in Z^{\,\infty}$ under condition  (b) and the function $\overline\Psi$ has the KL exponent $\beta\in [0,1)$
at $z^{\,\infty}$, it holds that\\[0.05in]
$\bullet $ if $\beta = 0$, the sequence $\{z^{\,\nu}\}$ converges in a finite number of steps;\\[0.05in]
$\bullet $ if $\beta \in (0\,,\,\frac{1}{2}\,]$, the sequence $\{z^{\,\nu}\}$ converges R-linearly, i.e.,
there exist $\eta>0$  and $r \in [0, 1)$ such that $\|z^{\,\nu} - z^{\,\infty}\|\leq \eta\, r^{\,\nu}$ for all $\nu$ sufficiently large;\\[0.05in]
$\bullet $ if $\beta \in (\frac{1}{2}\,,1)$, the sequence $\{z^{\,\nu}\}$ converges R-sublinearly,
specifically, there exists $\eta > 0$ such that $\|z^{\,\nu} - z^{\,\infty}\|\leq \eta\, \nu^{-\frac{1-\beta}{2\beta-1}}$ for all $\nu$ sufficiently large.
\end{proposition}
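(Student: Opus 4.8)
The plan splits along the two sufficient conditions. For part (a), the work is already essentially done: Proposition~\ref{proposition: d-stationary}(b) supplies the asymptotic regularity $\|z^{\,\nu+1}-z^{\,\nu}\|\to 0$, and part (c) guarantees that every accumulation point (an element of $Z^{\,\infty}$) is d-stationary. I would then invoke the elementary fact that a sequence whose successive differences vanish and whose accumulation-point set contains an isolated element must converge to that element, namely \cite[Proposition~8.3.10]{FacchineiPang2003}; the isolated point is thus the unique limit, which settles (a).

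For part (b), the backbone is the descent framework of Attouch--Bolte--Svaiter \cite{AttouchBolteSvaiter2013, BolteST14} applied to the surrogate $\overline\Psi$ along $\{z^{\,\nu}\}$, and I would assemble its three standard ingredients. First, \emph{sufficient decrease}: since the proof of Proposition~\ref{proposition: d-stationary}(b) established $s_\nu\le\psi(\theta^{\,\nu})\le r_\nu$, i.e.\ $z^{\,\nu}\in\overline{Z}$, we have $\overline\Psi(z^{\,\nu})=\varphi^{\,\uparrow}(r_\nu)+\varphi^{\,\downarrow}(s_\nu)$, and inequality (\ref{ineq:decreaseMM}) reads precisely $\overline\Psi(z^{\,\nu+1})+\frac{c}{2}\|z^{\,\nu+1}-z^{\,\nu}\|^2\le\overline\Psi(z^{\,\nu})$. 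Third, \emph{continuity}: passing to the limit in $s_\nu\le\psi(\theta^{\,\nu})\le r_\nu$ shows $z^{\,\infty}\in\overline{Z}$, so continuity of $\varphi^{\,\uparrow},\varphi^{\,\downarrow}$ together with convergence of $\{\varphi^{\,\uparrow}(r_\nu)+\varphi^{\,\downarrow}(s_\nu)\}$ (Proposition~\ref{proposition: d-stationary}(b)) gives $\overline\Psi(z^{\,\nu})\to\overline\Psi(z^{\,\infty})$. The second ingredient, the \emph{relative-error bound}, requires producing for all large $\nu$ a subgradient $w^{\,\nu+1}\in\partial\overline\Psi(z^{\,\nu+1})$ with $\|w^{\,\nu+1}\|\le b\,\|z^{\,\nu+1}-z^{\,\nu}\|$ for a fixed $b>0$; this is the crux.

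The main obstacle is exactly this subgradient bound, because the nonmonotone, moving-constraint character of the algorithm creates a mismatch: the subproblem (\ref{MM:subproblem}) minimizes over ${\cal Z}_{(i_1^\nu,i_2^\nu)}(\theta^{\,\nu})$, built from \emph{linearizations at $\theta^{\,\nu}$}, whereas $\partial\overline\Psi(z^{\,\nu+1})$ is governed by the \emph{exact} $\psi$ at $\theta^{\,\nu+1}$. My plan is to write the first-order (KKT) conditions of (\ref{MM:subproblem}) at its unique minimizer $z^{\,\nu+1}$; since $\varphi^{\,\uparrow},\varphi^{\,\downarrow}$ are differentiable near $R_{\,\infty},S_{\,\infty}$, the subproblem objective is smooth and these conditions express $c(z^{\,\nu}-z^{\,\nu+1})$ as the gradient of $\varphi^{\,\uparrow}(r)+\varphi^{\,\downarrow}(s)$ plus a normal-cone element of ${\cal Z}_{(i_1^\nu,i_2^\nu)}(\theta^{\,\nu})$ carrying nonnegative multipliers. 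The singleton hypothesis on $\mathcal{A}_{1;2\varepsilon}(\cdot)$ and $\mathcal{A}_{2;2\varepsilon}(\cdot)$ over $\Theta^{\,\infty}$ guarantees that, for all large $\nu$, each of $g$ and $h$ is realized near $\theta^{\,\nu+1}$ by a single smooth piece, so $\psi$ is $C^1$ there and the two defining constraints of $\overline{Z}$, namely $\psi(\theta)\le r$ and $s\le\psi(\theta)$, inherit the same active structure as those of ${\cal Z}$. I would then recycle the KKT multipliers to build a candidate $w^{\,\nu+1}\in\partial\overline\Psi(z^{\,\nu+1})$; the discrepancy between this candidate and the KKT residual is a sum of terms $[\nabla\psi_{\cdot}(\theta^{\,\nu+1})-\nabla\psi_{\cdot}(\theta^{\,\nu})]$ weighted by bounded multipliers, which the local Lipschitz continuity of $\nabla\psi_{1,i_1},\nabla\psi_{2,i_2}$ (and of $\nabla\varphi^{\,\uparrow},\nabla\varphi^{\,\downarrow}$) bounds by a constant times $\|\theta^{\,\nu+1}-\theta^{\,\nu}\|\le\|z^{\,\nu+1}-z^{\,\nu}\|$. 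Combined with the $c(z^{\,\nu}-z^{\,\nu+1})$ term this yields the bound. The delicate points are verifying boundedness of the multipliers (from active-set stability plus boundedness of $\{z^{\,\nu}\}$) and confirming that the candidate lands in $\partial\overline\Psi$ rather than in a strictly larger limiting cone.

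With ingredients (i)--(iii) and the KL property of $\overline\Psi$ at each $z^{\,\infty}\in Z^{\,\infty}$, the standard summability argument of \cite{AttouchBolteSvaiter2013} (concatenating the descent and relative-error inequalities with the KL inequality along tails) gives $\sum_\nu\|z^{\,\nu+1}-z^{\,\nu}\|<\infty$, so $\{z^{\,\nu}\}$ is Cauchy and converges to a single point of $Z^{\,\infty}$. For the rates, I would feed the sharp KL inequality with exponent $\beta$ into the recursion for the tail lengths $\Delta_\nu\triangleq\sum_{k\ge\nu}\|z^{\,k+1}-z^{\,k}\|$; since $\|z^{\,\nu}-z^{\,\infty}\|\le\Delta_\nu$, the resulting one-step inequality yields finite termination when $\beta=0$, geometric decay (hence R-linear convergence) when $\beta\in(0,\frac12]$, and polynomial decay $\Delta_\nu=O(\nu^{-(1-\beta)/(2\beta-1)})$ (hence R-sublinear convergence) when $\beta\in(\frac12,1)$, exactly as in the classical analysis.
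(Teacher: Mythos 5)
Your proposal is correct and follows essentially the same route as the paper: part (a) via \cite[Proposition~8.3.10]{FacchineiPang2003}, and part (b) by verifying the sufficient-decrease, relative-error, and continuity conditions of the Attouch--Bolte--Svaiter framework for $\overline\Psi$, with the relative-error bound extracted from the KKT conditions of the subproblem, the stabilization of the $\varepsilon$-argmax sets forced by the $2\varepsilon$-singleton hypothesis, and the Lipschitz gradient assumptions absorbing the mismatch between the linearization point $\theta^{\,\nu}$ and the new iterate $\theta^{\,\nu+1}$. The two ``delicate points'' you flag --- multiplier boundedness and confirming that your candidate lands in $\partial\overline\Psi(z^{\,\nu+1})$ --- are precisely where the paper's proof spends its effort (a four-case active-constraint analysis and an explicit computation of $\mathcal{N}(z^{\,\nu+1};\overline{Z})$ via a constraint-qualification check for \cite[Theorem~6.14]{RockafellarRWets98}), and it resolves them exactly as you anticipate.
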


\begin{proof}
It follows from Proposition \ref{proposition: d-stationary}(b) that $\displaystyle\lim_{\nu\to\infty}\|\,z^{\,\nu+1} - z^{\,\nu}\,\| = 0$.
Under assumption (a) here, the convergence of the whole sequence $\{z^{\,\nu}\}$ to the isolated element of $Z^{\, \infty}$ follows
from \cite[Proposition~8.3.10]{FacchineiPang2003}.  To prove the sequential convergence of $\{z^{\,\nu}\}$ under the conditions in assumption~(b),
it suffices to show the following three properties of $\{z^{\,\nu}\}$ and then apply the convergence results in
\cite[Theorem 2.9]{AttouchBolteSvaiter2013} to the function $\overline\Psi$:
\\[0.05in]
(i) $\overline\Psi(z^{\,\nu+1})\leq \overline\Psi(z^{\,\nu}) - \displaystyle\frac{c}{2}\,\|\,z^{\,\nu+1} - z^{\,\nu}\,\|^2$ for all $\nu\geq 1$\,; \\[0.05in]
(ii) there exists a subsequence $\{z^{\,\nu}\}_{\nu\in \kappa}$ of $\{z^{\,\nu}\}$ such that $\displaystyle\lim_{\nu(\in \kappa)\to \infty}z^{\,\nu}  \, =\,z^{\,\infty}$ and
$\displaystyle\lim_{\nu(\in \kappa)\to \infty}\overline\Psi(z^{\,\nu}) = \overline\Psi(z^{\,\infty})$\,;\\[0.05in]
(iii)  there exists a scalar $\eta>0$ such that for all $\nu$ sufficiently large, $a^{\,\nu+1}\in \partial \, \overline\Psi(z^{\,\nu+1})$ exists satisfying
$\|\,a^{\,\nu+1}\,\| \leq  \eta\,\|\,z^{\,\nu+1} - z^{\,\nu}\,\| $.

Properties (i) and (ii) readily follow from Proposition~\ref{proposition: d-stationary}.
To show (iii), we first note 
that $Z^{\,\infty}$ is a nonempty, compact, and connected set \cite[Proposition 8.3.9]{FacchineiPang2003}.
By assumption (b), we further derive the existence of $\bar{i}_j\in \{1, \ldots, k_j\}$ such
that $\mathcal{A}_{j;2\varepsilon}(\theta) = \{\bar{i}_j\}$ for all $\theta\,\in\, \Theta^{\,\infty}$, $j=1,2$.
Since the scalar sequence $\{\mbox{dist}\,(\theta^{\,\nu}, \Theta^{\,\infty})\}$ is bounded with a unique accumulation point $0$,
we have $\displaystyle\lim_{\nu\to \infty}\mbox{dist}\,(\theta^{\,\nu}, \Theta^{\,\infty}) = 0$.
Hence, for some constant $\kappa>0$ it holds that for all $\nu$ sufficiently large, some $\theta^{\,\nu;\infty}\in \Theta^{\,\infty}$ exists
for which $\|\,\theta^{\,\nu}-\theta^{\,\nu;\infty}\,\|\,\leq\, \displaystyle\frac{\varepsilon}{2\kappa}$ and
$|\,\psi_{1,{i}}(\theta^{\,\nu}) - \psi_{1,{i}}(\theta^{\,\nu;\infty})\,| \,\leq \, \kappa\,\|\,\theta^{\,\nu}-\theta^{\,\nu;\infty}\,\|\leq \,
\displaystyle\frac{1}{2}\,\varepsilon$ for any $i=1,\ldots, k_1$. Then for any ${i}\in \mathcal{A}_{1;\varepsilon}(\theta^{\,\nu})$,
$$\begin{array}{ll}
\psi_{1,{i}}(\theta^{\,\nu;\infty}) \,\geq \, \psi_{1, {i}}(\theta^{\,\nu}) -\displaystyle \frac{1}{2}\,\varepsilon\,\geq\,
\displaystyle{\max_{1\leq i\leq k_1}}\psi_{1, i}(\theta^{\,\nu}) - \frac{3}{2}\,\varepsilon\epc \mbox{since $i\in \mathcal{A}_{1;\varepsilon}(\theta^{\,\nu})$}\\[0.1in]
\geq\, \displaystyle\psi_{1, \bar{i}_1}(\theta^{\,\nu}) - \frac{3}{2}\,\varepsilon  \,\geq\, \psi_{1, \bar{i}_1}(\theta^{\,\nu;\infty})-2\,\varepsilon\\[0.1in]
\qquad\qquad\quad=\, \displaystyle\max_{1\leq i\leq k_1}\psi_{1, i}(\theta^{\,\nu;\infty})-2\,\varepsilon ,
\epc \mbox{since $ \mathcal{A}_{1;2\varepsilon}(\theta^{\,\nu;\infty})
\, = \, \{\bar{i}_1\}=\mathcal{A}_{1}(\theta^{\,\nu;\infty})$},
\end{array}
$$
i.e., ${i}\,\in\, \mathcal{A}_{1;2\varepsilon}(\theta^{\,\nu;\infty}) \,= \,\{\bar{i}_1\}$. Therefore, for all  $\nu$ sufficiently large,  $\mathcal{A}_{1;\varepsilon}(\theta^{\,\nu}) = \{\bar{i}_1\}$; and similarly, $\mathcal{A}_{2;\varepsilon}(\theta^{\,\nu}) = \{\bar{i}_2\}$.
By assumption (b) and \cite[Exercise 10.10]{RockafellarRWets98}, we further derive,
 \begin{equation}\label{proof:KL1}
 {\small
\left\{\begin{array}{ll}
\nabla \psi(\theta^{\,\nu+1}) \, = \, \nabla \psi_{1,\bar{i}_1}(\theta^{\,\nu+1}) - \nabla \psi_{2,\bar{i}_2}(\theta^{\,\nu+1})\\[0.1in]
\partial\, \overline\Psi(z^{\,\nu+1})
\, = \, \left(\begin{array}{cc}
0  \\
\nabla \varphi^{\,\uparrow}(r_{\nu+1}) \\
\nabla \varphi^{\,\downarrow}(s_{\nu+1})
\end{array}\right) + \mathcal{N}(z^{\,\nu+1};\, \overline{Z})
\end{array}\right. \mbox{for all  $\nu$ sufficiently large}.}
\end{equation}
Note that $\overline{Z}$ may not be a convex set; we write $\mathcal{N}(z; \,\overline{Z})$ as the normal
cone of $\overline{Z}$ at $z\in \overline{Z}$ based on the definition in \cite[Definition 6.3]{RockafellarRWets98}.
One can easily check that
  {\small  $$
  \begin{array}{ll}
 \left\{\begin{array}{cc}
  \alpha_1 \,\in \,\mathcal{N}(\psi(\theta^{\,\nu+1})-r_{\nu+1};\,\mathbb{R}_-), \; \alpha_2 \,\in\, \mathcal{N}(-\psi(\theta^{\,\nu+1})+s_{\nu+1};\,\mathbb{R}_-),\\[0.1in]
    0\,\in \,\left(\begin{array}{cc}
(\alpha_1-\alpha_2)\nabla \psi(\theta^{\,\nu+1}) +\mathcal{N}(\theta^{\,\nu+1};\Theta) \\ - \alpha_1 \\ \alpha_2
\end{array}\right)
\end{array}\right\}  \Longrightarrow \alpha_1=\alpha_2 = 0,
\end{array}
$$}
where we denote $\mathbb{R}_-\,\triangleq\,\{t\in \mathbb{R}\mid t\leq 0\}$.
It then follows from \cite[Theorem 6.14]{RockafellarRWets98} that
 \begin{equation}\label{proof:KL normalcone}
 {\small
 \begin{array}{ll}
\mathcal{N}(z^{\,\nu+1};\overline{Z})  =
 \left\{
\left(\begin{array}{cc}
(\alpha_1-\alpha_2)\nabla \psi(\theta^{\,\nu+1}) +\mathcal{N}(\theta^{\,\nu+1};\Theta) \\ - \alpha_1 \\ \alpha_2
\end{array}\right)   \middle\vert
\begin{array}{cc}
\alpha_1 \geq 0\,, \, \alpha_2 \geq 0\,, \\[0.1in]
 \alpha_1=0\;\mbox{if}\;\psi(\theta^{\,\nu+1})< r_{\nu+1}\\[0.1in]
\alpha_2 = 0  \;\mbox{if}\;\psi(\theta^{\,\nu+1})> s_{\nu+1}
 \end{array}
 \right\}.
 \end{array}
 }
 \end{equation}
For all  $\nu$ sufficiently large,  since $\mathcal{A}_{1;\varepsilon}(\theta^{\,\nu}) \times \mathcal{A}_{2;\varepsilon}(\theta^{\,\nu}) = \{(\bar{i}_1, \bar{i}_2)\}$, we deduce $z^{\,\nu+1} \,= \,z^{\,\nu+\frac{1}{2};\bar{i}_1, \bar{i}_2}\,= \, \displaystyle{
\operatornamewithlimits{\mbox{argmin}}_{z \in {\cal Z}_{(\bar{i}_1,\bar{i}_2)}(\theta^{\, \nu})}
} \, \wh{\Psi}_c
(z,z^{\, \nu})$. To proceed, we denote
$$
\begin{array}{cc}
I_1(z^{\,\nu+1}) \,\triangleq\, \left\{i:
\psi_{1,i}(\theta^{\,\nu+1}) - \psi_{2,\bar{i}_2}(\theta^{\,\nu}) - \nabla \psi_{2,\bar{i}_2}(\theta^{\,\nu})^T\,(\theta^{\,\nu+1} - \theta^{\,\nu})
\, = \, r_{\nu+1}\,\right\},\\[0.1in]
I_2(z^{\,\nu+1}) \,\triangleq\,  \left\{i: \psi_{1,\bar{i}_1}(\theta^{\,\nu}) +
\nabla \psi_{1,\bar{i}_1}(\theta^{\,\nu})^T\,(\theta^{\,\nu+1} - \theta^{\,\nu}) - \psi_{2,i}(\theta^{\,\nu+1})  \, =\,  s_{\nu+1}\,\right\}.
\end{array}
$$
It then follows from $z^{\,\nu+1}\,\in\, {\cal Z}_{(\bar{i}_1,\bar{i}_2)}(\theta^{\, \nu})$ that  one of the following four cases must hold for any $\nu$ sufficiently large:
(1) $I_1(z^{\,\nu+1}) = I_2 (z^{\,\nu+1})= \emptyset$;
(2) $I_1(z^{\,\nu+1}) = \{\bar{i}_1\}$ and  $I_2(z^{\,\nu+1}) = \emptyset$;
(3) $I_1(z^{\,\nu+1}) = \emptyset$ and $I_2(z^{\,\nu+1}) = \{\bar{i}_2\}$;
(4) $I_1(z^{\,\nu+1}) = \{\bar{i}_1\}$ and $I_2(z^{\,\nu+1}) = \{\bar{i}_2\}$.
For those $z^{\,\nu+1}$ satisfying case (1), we derive
$$\begin{array}{rll}
\psi(\theta^{\,\nu+1}) &= &\psi_{1, \bar{i}_1}(\theta^{\,\nu+1}) - \psi_{2,\bar{i}_2}(\theta^{\,\nu+1})\\[0.1in]
 &\leq &
\psi_{1, \bar{i}_1}(\theta^{\,\nu+1}) - \psi_{2,\bar{i}_2}(\theta^{\,\nu}) - \nabla \psi_{2,\bar{i}_2}(\theta^{\,\nu})^T\,(\theta^{\,\nu+1} - \theta^{\,\nu})  \, <\, r_{\nu+1} .
\end{array}
$$
Similarly one has $\psi(\theta^{\,\nu+1})>s_{\nu+1}$. Then
$
\mathcal{N}(z^{\,\nu+1};\,\overline{Z}) \,=\,
\left(\begin{array}{cc}
\mathcal{N}(\theta^{\,\nu+1};\,\Theta) \\ 0_{\mathbb{R}^2}
\end{array}\right)$.
Since all the inequality constraints in $z \in {\cal Z}_{(\bar{i}_1,\bar{i}_2)}(\theta^{\,\nu})$ are inactive at $z=z^{\,\nu+1}$,
by applying the optimality condition of the problem: $\displaystyle{
\operatornamewithlimits{\mbox{min}}_{z \in {\cal Z}_{(\bar{i}_1,\bar{i}_2)}(\theta^{\, \nu})}
} \, \wh{\Psi}_c
(z,z^{\, \nu})$, we deduce
\[
\begin{array}{cc}
0\in \mathcal{N}(\theta^{\,\nu+1};  \Theta)+ c\,(\theta^{\,\nu+1} - \theta^{\,\nu}),\epc\mbox{and}\\[0.1in]
\nabla\varphi^{\uparrow}(r_{\nu+1}) + c\,(r_{\nu+1} - r_{\nu}) =0,\epc
\nabla\varphi^{\downarrow}(s_{\nu+1})
 + c\,(s_{\nu+1} - s_{\nu})=0,
 \end{array}
\]
which, together with \eqref{proof:KL1}, yields $-\,c\,(z^{\,\nu+1} - z^{\,\nu})\,\in\, \partial\, \overline\Psi(z^{\,\nu+1}).
$
Hence, property (iii) holds with the constant $\eta_1 = c$.
For those $z^{\,\nu+1}$ satisfying
case (2), there exists a multiplier $\lambda_{\nu+1}\in \mathbb{R}$ corresponding to the active constraint $\psi_{1, \bar{i}_1}(\theta^{\,\nu+1}) \,-\,\psi_{2,\bar{i}_2}(\theta^{\,\nu}) - \nabla \psi_{2, \bar{i}_2}(\theta^{\,\nu})^T\left(\, \theta^{\,\nu+1} - \theta^{\,\nu}\,\right) = r_{\nu+1}$
such that
$$
0\,\in\, \left(\begin{array}{cc}
\lambda_{\nu+1} \,\left[\,\nabla \psi_{1,\bar{i}_1} (\theta^{\,\nu+1})- \nabla\psi_{2,\bar{i}_2}(\theta^{\,\nu})\,\right]  +
\mathcal{N}(\theta^{\,\nu+1}; \, \Theta)\\[0.05in]
\nabla\varphi^{\uparrow}(r_{\nu+1}) - \lambda_{\nu+1} \\[0.05in]
\nabla\varphi^{\downarrow}(s_{\nu+1})\end{array}\right) + c\,(z^{\,\nu+1} - z^{\,\nu}).
$$
Since $\varphi^{\uparrow}$ is non-decreasing, we have $\nabla\varphi^{\uparrow}(r_{\nu+1}) = \lambda_{\nu+1} - c\,(r_{\nu+1} - r_\nu)\geq 0$,
which, together with \eqref{proof:KL1} and  \eqref{proof:KL normalcone},  yields
$$
\left(\begin{array}{cc}
-b^{\,\nu+1} \\
\,c\,(r_{\nu+1}-r_{\nu})\\
0
\end{array}\right) - c\,(z^{\,\nu+1} - z^{\,\nu}) \,\in \, \partial\, \overline\Psi(z^{\,\nu+1}),\epc \mbox{where}
$$
{\small $$
 b^{\,\nu+1}\triangleq  c\,(r_{\nu+1} - r_\nu)\left[\nabla \psi_{1, \bar{i}_1}(\theta^{\,\nu+1}) - \nabla \psi_{2, \bar{i}_2}(\theta^{\,\nu})\right]  +
\nabla \varphi^{\uparrow}(r_{\nu+1})\left[\nabla \psi_{2, \bar{i}_2}(\theta^{\,\nu+1}) - \nabla \psi_{2, \bar{i}_2}(\theta^{\,\nu})\right].
$$}
In addition, $\{\nabla \psi_{1, \bar{i}_1}(\theta^{\,\nu+1}) - \nabla \psi_{2, \bar{i}_2}(\theta^{\,\nu})\}$ and $\{\nabla \varphi^{\,\uparrow}(r_{\nu+1})\}$
are bounded for all $\nu$ sufficiently large  under the assumptions in (b).
Hence, property (iii) holds for some constant $\eta_2>0$. Similarly, those points $z^{\,\nu+1}$ satisfying cases (3) and (4) can be proved to have
property (iii) for some positive constants $\eta_3$ and $\eta_4$, respectively.
Consequently, property (iii) holds for all $\nu$ sufficiently large with $\eta = \max\{\eta_1,\ldots, \eta_4\}$.
The desired global convergence of $\{z^{\,\nu}\}$ is thus established based on the above arguments.
Finally,
with the proven properties (i)-(iii), the stated convergence rate can be established by similar arguments
to \cite[Theorem 2]{AttouchBolte2009}; see also \cite[Proposition 4]{BoltePauwels2016}.
\end{proof}

The sequential convergence of the standard MM algorithm (without regularizing the slack variables) has been studied in \cite{BolteST14} under
a condition that is in the same spirit as property (iii) in the above proof.  In order to verify this property, we essentially assume that
$\varphi^{\, \uparrow}$, $\varphi^{\, \downarrow}$, $\psi_1$, and $\psi_2$ are differentiable with locally Lipschitz continuous gradients
near $Z^{\, \infty}$.  This type of assumptions have also been made in \cite{LeHuynhPham09,WenChenPong17,LuZhouSun17}
to prove the sequential convergence of the dc algorithm (with extrapolation) for solving the dc program:
$\displaystyle\operatornamewithlimits{minimize}_x\, f(x) \, -\, g(x)$, where  the function $g$ is assumed to be differentiable
with  a locally Lipschitz gradient near all accumulation points.  Unlike the dc algorithm that only employs a majorization of the function $g$,
we majorize both $\psi_1$ and $\psi_2$ and do not rely on a dc decomposition of the overall objective function $\Psi$.  Admittedly, this
differentiability assumption is rather restrictive for a nondifferentiable problem.  The difficulty in trying to relax this assumption is
the evaluation of the subdifferential $\partial \overline\Psi(z^{\,\nu+1})$ of the extended-value function $\overline{\Psi}$ that involves the
nonconvex set $\overline{Z}$ defined by the nondifferentiable function $\psi$.  It is our interest to pursue a thorough investigation
of this issue that links the theory of error bounds to the sequential convergence of iterative methods for specially structured
nondifferentiable optimization problems, without relaying the hard-to-evaluate subdifferential as needed here.  Regrettably, such a study is beyond
the scope of this paper.

\subsection{Two variants of the MM algorithm}

Though the MM algorithm discussed above converges to a d-stationary point, it potentially requires high computational cost
per iteration because multiple subproblems need to be solved.  To reduce the computational cost, we consider two variants of
the algorithm. The first one requires solving only one subproblem associated with an arbitrary pair of
indices $(i_1, i_2)\in \mathcal{A}_1(\theta^{\,\nu})\times \mathcal{A}_2(\theta^{\,\nu})$ in each iteration.

\vskip 0.2cm
\noindent\makebox[\linewidth]{\rule{\textwidth}{1pt}}
\noindent
MM-1: a simplified version of the MM algorithm for solving \eqref{eq:optimization model2}.\\
\noindent\makebox[\linewidth]{\rule{\textwidth}{1pt}}

\noindent
{\bf Initialization.}  Given are a scalar $c > 0$, an initial point $\theta^{\, 0} \in \Theta$ and an arbitrary
pair in indices $( i_1^0, i_2^0  )\in \mathcal{A}_1( \bar{\theta} ) \times \mathcal{A}_2( \bar{\theta} )$.  Let
$r_0 \triangleq \psi( \theta^{\, 0} ) \triangleq s_0$; thus
$z^{\, 0} \triangleq ( \theta^{\, 0}, r_0, s_0 ) \in {\cal Z}_{(i_1^0,i_2^0)}( \bar{\theta} )$.  Set $\nu = 0$.\\[0.05in]
{\bf Step 1.}  Let
$z^{\, \nu+1} \,\triangleq \, \displaystyle{
\operatornamewithlimits{\mbox{argmin}}_{z \in {\cal Z}_{(i_1^{\nu},i_2^{\nu})}(\theta^{\, \nu})}
} \, \wh{\Psi}_c(z,z^{\, \nu})$ for some $(i_1^\nu, i_2^\nu)\in \mathcal{A}_1(\theta^{\,\nu})\times \mathcal{A}_2(\theta^{\,\nu})$.\\[0.05in]
{\bf Step 2.} If $z^{\, \nu+1}$ satisfies a prescribed stopping rule, terminate; otherwise, return to Step 1 with $\nu$ replaced by $\nu+1$.
\hfill $\Box$

\noindent\makebox[\linewidth]{\rule{\textwidth}{1pt}}
\vskip 0.2cm

Unfortunately, the above MM algorithm is not guaranteed to converge to a d-stationary solution of \eqref{eq:optimization model2}.
The reason is easy to understand; namely, there is a ``for all index pairs'' condition in the requirement of d-stationarity, whereas each iteration
of the MM-1 algorithm solves only one subprogram corresponding to a single index pair.  Relaxing the ``for all'' requirement in d-stationarity,
we say that a point $\bar{\theta} \in \Theta$ is a {\sl weak} ${\cal M}$-{\sl stationary point} of \eqref{eq:optimization model2} if
{\sl there exists} $(\bar{i}_1,\bar{i}_2)\in \mathcal{A}_1(\bar{\theta})\times \mathcal{A}_2(\bar{\theta})$ such that
$\bar{\theta} \in \displaystyle{
\operatornamewithlimits{\mbox{argmin}}_{\theta\in \Theta}
} \, \mathcal{M}\Psi_{(\bar{i}_1,\bar{i}_2)}(\theta,\bar{\theta})$.  The prefix ${\cal M}$ is used to highlight that the
concept employs the majorization of the function $\Psi$ at $\bar{\theta}$ by the family of convex functions
$\left\{ \mathcal{M}\Psi_{(\bar{i}_1,\bar{i}_2)}(\bullet,\bar{\theta}) \right\}$.  In the case where $\varphi$ is the
identity function, a weak ${\cal M}$-stationary point of the function $\Psi$ on $\Theta$
coincides with a ``weak d-stationary point'' as defined in \cite[Subsection~3.3]{PangRazaviyaynAlvarado16}, thus
a weak ${\cal M}$-stationary point must be a critical point (in the sense of dc programming)
of a un-composed difference-max program.

The convergence of the MM-1 algorithm is stated in the proposition below, whose proof can be derived similarly to the proof of
Proposition~\ref{proposition: d-stationary}. We omit the details here.

\begin{proposition}\label{prop: weak dstat}
Suppose that $\Psi$ is bounded below on the closed convex set $\Theta$. For any accumulation point $( \theta^{\, \infty},r_{\infty},s_{\infty} )$ of the sequence $\left\{ z^{\, \nu} = ( \theta^{\, \nu}, r_\nu, s_\nu) \right\}$ generated by the MM-1 algorithm, if it exists,
$\theta^{\, \infty}$ is a weak ${\cal M}$-stationary point of \eqref{eq:optimization model2}. \hfill $\Box$
\end{proposition}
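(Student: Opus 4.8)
The plan is to follow, step for step, the architecture of the proof of Proposition~\ref{proposition: d-stationary}, keeping verbatim every part that is insensitive to the index-selection rule and modifying only the accumulation-point argument so as to produce the weaker ``there exists a pair'' conclusion. First I would note that the analogues of parts (a) and (b) hold for the MM-1 iterates. The one-step descent inequality
\[
\varphi^{\,\uparrow}(r_{\nu+1})+\varphi^{\,\downarrow}(s_{\nu+1})+\frac{c}{2}\,\|\,z^{\,\nu+1}-z^{\,\nu}\,\|^2 \;\leq\; \varphi^{\,\uparrow}(r)+\varphi^{\,\downarrow}(s)+\frac{c}{2}\,\|\,z-z^{\,\nu}\,\|^2,\quad z\in{\cal Z}_{(i_1^{\nu},i_2^{\nu})}(\theta^{\,\nu}),
\]
is here immediate from the defining minimality of $z^{\,\nu+1}$, without the two-stage comparison needed in the full algorithm. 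Moreover the invariant $r_\nu\geq\psi(\theta^{\,\nu})\geq s_\nu$ is verified exactly as before, using the convexity of $\psi_{2,i_2^\nu}$ together with $i_2^\nu\in{\cal A}_2(\theta^{\,\nu})$ (so that $\psi_{2,i_2^\nu}(\theta^{\,\nu})=h(\theta^{\,\nu})$) and the symmetric argument for $s$. Hence $\{\varphi^{\,\uparrow}(r_\nu)+\varphi^{\,\downarrow}(s_\nu)\}$ is non-increasing and bounded below by $\inf_\Theta\Psi$, whence it converges and $\lim_{\nu\to\infty}\|z^{\,\nu+1}-z^{\,\nu}\|=0$.

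Second, let $z^{\,\infty}=(\theta^{\,\infty},r_\infty,s_\infty)$ be the limit of a convergent subsequence $\{z^{\,\nu}\}_{\nu\in\kappa}$. The decisive structural point is that in MM-1 the pair $(i_1^\nu,i_2^\nu)$ is drawn from the \emph{exact} argmax sets ${\cal A}_1(\theta^{\,\nu})\times{\cal A}_2(\theta^{\,\nu})$, not from their $\varepsilon$-enlargements. Since only finitely many index pairs are available, I would pass to a further subsequence $\kappa'\subseteq\kappa$ along which the selected pair is constant, say $(i_1^\nu,i_2^\nu)\equiv(\bar i_1,\bar i_2)$. For $\nu\in\kappa'$ one has $\psi_{1,\bar i_1}(\theta^{\,\nu})=g(\theta^{\,\nu})$ and $\psi_{2,\bar i_2}(\theta^{\,\nu})=h(\theta^{\,\nu})$; letting $\nu\,(\in\kappa')\to\infty$ and invoking continuity of $\psi_{1,\bar i_1},\psi_{2,\bar i_2},g,h$ gives $\psi_{1,\bar i_1}(\theta^{\,\infty})=g(\theta^{\,\infty})$ and $\psi_{2,\bar i_2}(\theta^{\,\infty})=h(\theta^{\,\infty})$, i.e.\ $(\bar i_1,\bar i_2)\in{\cal A}_1(\theta^{\,\infty})\times{\cal A}_2(\theta^{\,\infty})$. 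This extraction is exactly what replaces the $\varepsilon$-inclusion ${\cal A}_1(\theta^{\,\infty})\times{\cal A}_2(\theta^{\,\infty})\subseteq{\cal A}_{1;\varepsilon}(\theta^{\,\nu})\times{\cal A}_{2;\varepsilon}(\theta^{\,\nu})$ used in part (c), and it is the place where the ``for all pairs'' guarantee is relinquished in favor of a single limiting pair.

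Third, I would show $z^{\,\infty}\in\operatornamewithlimits{\mbox{argmin}}_{z\in{\cal Z}_{(\bar i_1,\bar i_2)}(\theta^{\,\infty})}\wh{\Psi}_c(z,z^{\,\infty})$ and then apply Lemma~\ref{lm:regularized M}. Membership $z^{\,\infty}\in{\cal Z}_{(\bar i_1,\bar i_2)}(\theta^{\,\infty})$ follows by passing the invariant $r_\nu\geq\psi(\theta^{\,\nu})\geq s_\nu$ to the limit. For the minimality, fix an arbitrary $z=(\theta,r,s)\in{\cal Z}_{(\bar i_1,\bar i_2)}(\theta^{\,\infty})$ and construct feasible perturbations $\wh{z}^{\,\nu}=(\theta,\wh r_\nu,\wh s_\nu)\in{\cal Z}_{(\bar i_1,\bar i_2)}(\theta^{\,\nu})$ precisely as in part (c), obtaining $\wh r_\nu,\wh s_\nu$ from $r,s$ by adding the absolute value of the discrepancy between the linearizations of $h$ (respectively $g$) at $\theta^{\,\nu}$ and at $\theta^{\,\infty}$; continuity then yields $(\wh r_\nu,\wh s_\nu)\to(r,s)$. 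Applying the descent inequality with $z=\wh{z}^{\,\nu}$ along $\kappa'$, and using $\|z^{\,\nu+1}-z^{\,\nu}\|\to0$ so that $z^{\,\nu+1}\to z^{\,\infty}$ along $\kappa'$ as well, I let $\nu\,(\in\kappa')\to\infty$ to get $\varphi^{\,\uparrow}(r_\infty)+\varphi^{\,\downarrow}(s_\infty)\leq\wh{\Psi}_c(z,z^{\,\infty})$, which is the claimed minimality. Lemma~\ref{lm:regularized M} converts this into $\theta^{\,\infty}\in\operatornamewithlimits{\mbox{argmin}}_{\theta\in\Theta}{\cal M}\Psi_{(\bar i_1,\bar i_2)}(\theta,\theta^{\,\infty})$, which is exactly weak ${\cal M}$-stationarity of $\theta^{\,\infty}$.

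I expect the main obstacle to be conceptual rather than computational: one must not overreach toward full d-stationarity. Because a single pair is solved per iteration, the limiting pair $(\bar i_1,\bar i_2)$ extracted along $\kappa'$ is the only pair for which the majorant-minimization identity can be certified, and, lacking the $\varepsilon$-enlargement mechanism of the full algorithm, there is nothing forcing the remaining pairs in ${\cal A}_1(\theta^{\,\infty})\times{\cal A}_2(\theta^{\,\infty})$ to share that property; ``there exists'' is therefore genuinely the sharpest attainable conclusion, and a counterexample in the spirit of Example~4 of \cite{PangRazaviyaynAlvarado16} shows it cannot be strengthened. A minor technical point is that stabilizing the index pair requires passing to a sub-subsequence, but since $\kappa'\subseteq\kappa$ this subsequence still converges to the same $z^{\,\infty}$, so no loss occurs.
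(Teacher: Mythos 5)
Your proposal is correct and is essentially the derivation the paper itself points to: the paper omits the proof of this proposition, stating only that it "can be derived similarly to the proof of Proposition~\ref{proposition: d-stationary}," and your argument carries out exactly that adaptation, with the one genuinely new (and correctly executed) ingredient being the extraction of a sub-subsequence along which the selected index pair is constant, so that continuity places that fixed pair in $\mathcal{A}_1(\theta^{\,\infty})\times\mathcal{A}_2(\theta^{\,\infty})$ and Lemma~\ref{lm:regularized M} then delivers weak $\mathcal{M}$-stationarity for that single pair. Your observation that the exact-argmax selection in MM-1 makes the invariant $r_\nu\geq\psi(\theta^{\,\nu})\geq s_\nu$ cleaner than in the $\varepsilon$-enlarged version, and that only the "there exists" conclusion is attainable, is also consistent with the paper's discussion.
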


It is worth mentioning that unlike a d-stationary point of \eqref{eq:optimization model2} which is independent
of the representation of $\Psi$ by its components $\varphi$ and $\psi$, the concept of a weak ${\cal M}$-stationary point
not only depends on the decomposition of $\varphi = \varphi^{\, \uparrow} + \varphi^{\, \downarrow}$, but also depends on
the ``max-max'' representation of $\psi$.  To see the dependence on the former,
consider univariate functions:
{\small \[
\varphi(t)  \,= \,\underbrace{2t}_{\mbox{$\varphi^{\, \uparrow}_1(t)$}} + \underbrace{-t}_{\mbox{$\varphi^{\, \downarrow}_1(t)$}}
\, = \, \underbrace{t}_{\mbox{$\varphi^{\, \uparrow}_2(t)$}} +
\underbrace{0}_{\mbox{$\varphi^{\, \downarrow}_2(t)$}},\epc \psi(t) \,= \,
\underbrace{\max\,(2t, \, 1.5t )}_{\mbox{$g(t)$}} - \underbrace{\max\,(t\,,\, 0.5t)}_{\mbox{$h(t)$}}.
\]
}
One can easily check that $t=0$ is a ${\cal M}$-stationary point of $\varphi \circ \psi$ under the decomposition
$\varphi = \varphi_1^{\, \uparrow} + \varphi_1^{\, \downarrow}$ (with $(i_1, i_2) = (1,1)\in \mathcal{A}_1(0)\times \mathcal{A}_2(0)$),
but fails to be that under the decomposition $\varphi = \varphi_2^{\, \uparrow} + \varphi_2^{\, \downarrow}$.
To see the dependence on the latter, we may consider the example $\varphi(t) = t$ and $\psi(t) = \max\,(2t, 0) - \max\,(t, -t) = t\,-\,0$ for a scalar $t$,
and verify $t=0$ is a weak ${\cal M}$-stationary point of $\varphi\circ \psi$ under the former representation, but fails to be weakly ${\cal M}$-stationary
under the latter representation.   In addition, a weak ${\cal M}$-stationary point of \eqref{eq:optimization model2} may not be its
C-stationary point as indicated by $\varphi(t) = t$ and $\psi(t) = \max\,(2t, 0) - \max\,(t, -t)$ at $t=0$,
and vice versa as shown by $\varphi(t) = t$ and $\psi(t) = t - \max\,(0,2t)$ at $t=0$.
%
%
%
%
%

The second variant of the  MM algorithm is a probabilistic version  that requires solving only one single convex subprogram corresponding
to a randomly selected pair $(i_1,i_2)\in \mathcal{A}_{1;\varepsilon}(\theta^{\,\nu})\times \mathcal{A}_{2;\varepsilon}(\theta^{\,\nu})$,
each with positive probability of being selected.

\noindent\makebox[\linewidth]{\rule{\textwidth}{1pt}}

\noindent MM-2: A randomized version of the MM algorithm for solving \eqref{eq:optimization model2}.

\noindent\makebox[\linewidth]{\rule{\textwidth}{1pt}}

\noindent
{\bf Initialization.}
Given are a scalar $c > 0$, an initial point $\theta^{\, 0} \in \Theta$ and an arbitrary
pair in indices $( i_1^0, i_2^0  )\in \mathcal{A}_1( \bar{\theta} ) \times \mathcal{A}_2( \bar{\theta} )$.  Let
$r_0 \triangleq \psi( \theta^{\, 0} ) \triangleq s_0$; thus
$z^{\, 0} \triangleq ( \theta^{\, 0}, r_0, s_0 ) \in {\cal Z}_{(i_1^0,i_2^0)}( \bar{\theta} )$.  Set $\nu = 0$. \\[0.05in]
\noindent
{\bf Step 1.} Choose $(i_1^\nu,i_2^\nu)\in \mathcal{A}_{1;\varepsilon}(\theta^{\,\nu})\times \mathcal{A}_{2;\varepsilon}(\theta^{\,\nu})$ randomly and independently from the previous iterations so that
$$
p^{\nu,i_1,i_2}\,\triangleq \mbox{Prob}\left\{(i_1,i_2)\; \mbox{is chosen}\, \mid\,  \mbox{given}\; z^{\,\nu}\right\}  \, \geq\,  p_{\min} \,> \, 0.
$$
Let  $z^{\,\nu+\frac{1}{2}; i_1^\nu, i_2^\nu} \,\triangleq \, \displaystyle\operatornamewithlimits{\mbox{argmin}}_{z \in {\cal Z}_{(i_1^{\nu},i_2^{\nu})}(\theta^{\, \nu})}
 \, \wh{\Psi}_c(z,\,z^{\, \nu})$. \\[0.05in]
\noindent
{\bf Step 2.} Set $z^{\,\nu+1} \,\triangleq \, \left\{\begin{array}{ll}
z^{\,\nu+\frac{1}{2}; {i}_1^\nu,{i}_2^\nu}  \epc {\rm if}\;\, \wh{\Psi}_c
(z^{\,\nu+\frac{1}{2};\,{i}_1^\nu,\,{i}_2^\nu},\,z^{\,\nu}) \, <\, \varphi^{\,\uparrow}(r_{\nu})+\varphi^{\,\downarrow}(s_{\nu}),\\[0.1in]

z^{\,\nu}  \qquad\qquad {\rm otherwise}.
\end{array}\right.
$\\[0.05in]
\noindent
{\bf Step 3.}  If $z^{\,\nu+1}$ satisfies a prescribed stopping rule, terminate; otherwise, return to Step 1 with $\mu$ replaced by $\mu+1$.
\hfill $\Box$
\noindent\makebox[\linewidth]{\rule{\textwidth}{1pt}}

The following proposition extends the  convergence result of \cite[Proposition 7]{PangRazaviyaynAlvarado16} for dc programming.
We omit the proof again since it can be derived with little difficulty by combining the arguments of
Proposition~\ref{proposition: d-stationary} and \cite[Proposition 7]{PangRazaviyaynAlvarado16}.
\begin{proposition}\label{proposition:MMrandom}
Suppose that $\Psi$ is bounded below on the closed convex set $\Theta$.
Let $\left\{ z^{\, \nu} = ( \theta^{\, \nu}, r_\nu, s_\nu) \right\}$ be a sequence generated by the MM-2 algorithm. For any accumulation point $( \theta^{\, \infty},r_{\infty},s_{\infty} )$ of this sequence, if it exists,
$\theta^{\, \infty}$ is a d-stationary point of \eqref{eq:optimization model2} with probability one.  \hfill $\Box$
\end{proposition}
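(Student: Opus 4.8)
The plan is to follow the three-step structure of the proof of Proposition~\ref{proposition: d-stationary}, modifying only the passage from the iterates to the d-stationarity condition so as to accommodate the single randomly selected index pair per iteration, along the lines of \cite[Proposition~7]{PangRazaviyaynAlvarado16}.

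First I would establish the deterministic ingredients that hold for \emph{every} realization of the random selection. As in Proposition~\ref{proposition: d-stationary}(b) one has $z^{\,\nu}\in{\cal Z}_{(i_1^\nu,i_2^\nu)}(\theta^{\,\nu})$, so $z^{\,\nu}$ is feasible for the $\nu$-th subproblem and the unique (by strong convexity) minimizer $z^{\,\nu+\frac{1}{2}; i_1^\nu, i_2^\nu}$ satisfies $\wh{\Psi}_c(z^{\,\nu+\frac{1}{2}; i_1^\nu, i_2^\nu},z^{\,\nu})\leq\varphi^{\,\uparrow}(r_\nu)+\varphi^{\,\downarrow}(s_\nu)$. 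The acceptance rule in Step~2 then makes $\{\varphi^{\,\uparrow}(r_\nu)+\varphi^{\,\downarrow}(s_\nu)\}$ non-increasing; moreover a rejection can occur only when the subproblem minimizer already equals $z^{\,\nu}$, so in all cases $z^{\,\nu+1}=\operatorname{argmin}_{z\in{\cal Z}_{(i_1^\nu,i_2^\nu)}(\theta^{\,\nu})}\wh{\Psi}_c(z,z^{\,\nu})$ and the quadratic descent inequality of Proposition~\ref{proposition: d-stationary}(a) is in force. Since $\Psi$ is bounded below, summing this inequality yields $\lim_{\nu\to\infty}\|z^{\,\nu+1}-z^{\,\nu}\|=0$, exactly as in Proposition~\ref{proposition: d-stationary}(b).

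Next I would carry out the probabilistic covering argument. Let $\{z^{\,\nu}\}_{\nu\in\kappa}\to z^{\,\infty}=(\theta^{\,\infty},r_\infty,s_\infty)$ and fix a pair $(i_1,i_2)\in\mathcal{A}_1(\theta^{\,\infty})\times\mathcal{A}_2(\theta^{\,\infty})$. The inclusion $\mathcal{A}_1(\theta^{\,\infty})\times\mathcal{A}_2(\theta^{\,\infty})\subseteq\mathcal{A}_{1;\varepsilon}(\theta^{\,\nu})\times\mathcal{A}_{2;\varepsilon}(\theta^{\,\nu})$ proved in Proposition~\ref{proposition: d-stationary}(c) for all large $\nu\in\kappa$ shows that this pair is eligible, hence chosen with conditional probability at least $p_{\min}>0$, at every such iteration. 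Writing $\mathcal{F}_\nu$ for the history up to $z^{\,\nu}$, the conditional selection probabilities of $(i_1,i_2)$ sum to $+\infty$ along $\kappa$, so by the conditional (second) Borel--Cantelli lemma the pair is, with probability one, selected at infinitely many iterations lying in $\kappa$ (or, to account for the fact that $\kappa$ is itself realization-dependent, in neighborhood-blocks of $\kappa$-indices whose radii vanish by $\|z^{\,\nu+1}-z^{\,\nu}\|\to 0$). This produces a further subsequence $\{\nu_\ell\}$ with $\theta^{\,\nu_\ell}\to\theta^{\,\infty}$ and $z^{\,\nu_\ell+1}=\operatorname{argmin}_{z\in{\cal Z}_{(i_1,i_2)}(\theta^{\,\nu_\ell})}\wh{\Psi}_c(z,z^{\,\nu_\ell})$, and passing to the limit exactly as in Proposition~\ref{proposition: d-stationary}(c) gives $z^{\,\infty}\in\operatorname{argmin}_{z\in{\cal Z}_{(i_1,i_2)}(\theta^{\,\infty})}\wh{\Psi}_c(z,z^{\,\infty})$.

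Because $\mathcal{A}_1(\theta^{\,\infty})\times\mathcal{A}_2(\theta^{\,\infty})$ is finite, the intersection over its pairs of these probability-one events is again of probability one, and on it $z^{\,\infty}$ minimizes $\wh{\Psi}_c(\cdot,z^{\,\infty})$ over ${\cal Z}_{(i_1,i_2)}(\theta^{\,\infty})$ for every active pair. Lemma~\ref{lm:regularized M} then converts each of these into $\theta^{\,\infty}\in\operatorname{argmin}_{\theta\in\Theta}\mathcal{M}\Psi_{(i_1,i_2)}(\theta,\theta^{\,\infty})$, whereupon Lemma~\ref{lemma: d-stationary} certifies that $\theta^{\,\infty}$ is a d-stationary point of \eqref{eq:optimization model2} with probability one. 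I expect the main obstacle to be precisely the covering step: the deterministic MM algorithm sweeps every active pair at each iteration, whereas here I must argue that the vanishing increments $\|z^{\,\nu+1}-z^{\,\nu}\|\to 0$ force a whole neighborhood-block of iterates to cluster at $z^{\,\infty}$, so that the uniform positivity $p_{\min}$ of the selection probabilities, via a conditional Borel--Cantelli argument that is careful about the random subsequence $\kappa$, guarantees almost surely that each active pair is examined arbitrarily close to the accumulation point.
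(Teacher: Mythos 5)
Your proposal is correct and matches the paper's intended argument: the paper omits this proof entirely, stating only that it "can be derived with little difficulty by combining the arguments of Proposition~\ref{proposition: d-stationary} and \cite[Proposition~7]{PangRazaviyaynAlvarado16}," and that is exactly the combination you carry out (deterministic descent and vanishing increments, eligibility of every active pair at $\theta^{\,\infty}$ for all large $\nu$ in the convergent subsequence, conditional Borel--Cantelli with the uniform bound $p_{\min}$, intersection over the finitely many active pairs, then Lemma~\ref{lm:regularized M} and Lemma~\ref{lemma: d-stationary}). The only delicate point is the one you yourself flag --- making the Borel--Cantelli step rigorous despite the realization-dependence of the subsequence $\kappa$ --- and your proposed handling of it is consistent with the cited reference.
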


\section{A Semismooth Newton Method for the Subproblems}
\label{sec: subproblem}
The convex subproblems in Step 1 (see~\eqref{MM:subproblem}) are the workhorse of the MM algorithms. In the present section,
we discuss a semismooth Newton (SN) method for solving them.  There are two important motivations
for us to adopt this Newton-type method instead of various first-order methods that are quite popular in recent years.
One, although the computational cost for each step of the SN method may be more than that of first-order methods,
this cost is compensated by much fewer iterations, especially if the SN method is warm started by the solution of
the preceding subproblem.  The overall computational time of the SN method is thus potentially less than that of first-order methods.
The recent papers
\cite{QiSun2006,ZhaoSunToh2010,MilzarekUlbrich2014,YangSunToh2015,LiSunToh17} provide computational evidence to support the practical advantages of the SN method.
Two, the MM algorithm itself is a first-order method,
whose objective value can easily be stagnant after several iterations; see e.g.,~\cite[Chapter 1]{Lange16}.
First-order methods may perform fairly well if it is directly employed to solve stand-alone problems to low or moderate accuracy.  But unlike the SN method, which is able to compute very accurate solutions of the subproblems reasonably fast, first-order methods may not provide sufficiently accurate solutions when embedded in an iterative process such as
the MM algorithm as the methods of choice for solving the subproblems.

Recall that a locally Lipschitz continuous  function $\Phi: \Omega\to \mathbb{R}^m$  defined on the open set $\Omega$ is said to be
{\sl semismooth} at $\bar{x} \in \Omega$ if $\Phi$ is directionally differentiable at $\bar{x}$ and
for any $V \in \partial_C \Phi(x)$
with $x$ sufficiently near $\bar{x}$, $\Phi(x) - \Phi(\bar{x}) - {V} (x-\bar{x}) = o(\|\,x-\bar{x}\,\|)$;
$\Phi$ is said to be {\sl strongly semismooth} at $\bar{x}$ if $o(\|\,x-\bar{x}\,\|)$ is strengthened to
$O(\|\,x - \bar{x}\,\|^2)$ \cite{Mifflin1977,QiSun1993}.
The function $\Phi$ is said to be a (strongly) semismooth  function on $\Omega$ if it is (strongly) semismooth
everywhere in $\Omega$.  The class of semismooth functions is broad, including piecewise semismooth functions, and thus piecewise smooth functions \cite[Proposition 7.4.6]{FacchineiPang2003}.
It is also known that every piecewise affine map from $\mathbb{R}^n$ into $\mathbb{R}^m$ is strongly
semismooth \cite[Proposition 7.4.7]{FacchineiPang2003}.  A real-valued function $\phi : \Omega \to \mathbb{R}$ is
said to {\sl semismoothly differentiable} (SC$^1$) at $\bar{x} \in \Omega$ \cite[Subsections~7.4.1 and 8.3.3]{FacchineiPang2003}
if it is differentiable near $\bar{x}$ and its gradient is a semismooth function at $\bar{x}$.

With pioneering work by Kojima and Shindo \cite{KojimaShindo1986}, Kummer \cite{Kummer1988}, Pang \cite{Pang1990},
Qi and Sun \cite{QiSun1993}, among others,
the SN method is a generalization of Newton's method for solving the semismooth equation $\Phi(x) = 0$,
which has also been extended to a convex constrained
SC$^1$ minimization problem in \cite{PangQi1995}; this is the method that we propose to apply to solve the dual problem of
(\ref{MM:subproblem}).

Letting $\mathbf{1}$ denote the vector of all ones of appropriate dimensions, the subproblem \eqref{MM:subproblem} can
be written as: for the given pair $(i_1^{\nu},i_2^{\nu})$:
\begin{equation}\label{subproblem1}
\begin{array}{ll}
\displaystyle\operatornamewithlimits{\mbox{minimize}}_{\theta\in \Theta,\, r, s\in\mathbb{R}}\epc  &
 \, \varphi^{\,\uparrow}(r)\,+\,\varphi^{\,\downarrow}(s)\,  + \displaystyle\frac{c}{2}
\left[\,\|\,\theta - \theta^{\,\nu}\,\|^2 + (\, r - r_{\nu} \, )^2 + (\, s - s_{\nu} \, )^2 \,\right]\\[0.1in]
\mbox{subject to} \epc
&  b^{1;i_2^{\nu}}(\theta;\theta^{\, \nu}) - \mathbf{1} \, r \, \leq \, 0, \mbox{ and } b^{2_;i_1^{\nu}}(\theta;\theta^{\, \nu})
+ \mathbf{1} \, s \, \leq \, 0,
\end{array}
\end{equation}
with $b^{1;i_2^{\nu}}(\bullet;\theta^{\, \nu})$ and $b^{2;i_1^{\nu}}(\bullet;\theta^{\, \nu})$ denoting two vector functions
with convex differentiable  components given by:
\[ \begin{array}{ll}
b^{1;i_2^{\nu}}_i(\theta;\theta^{\, \nu}) \, \triangleq \, \psi_{1,i}(\theta) - \left[ \, h(\theta^{\, \nu}) +
\nabla \psi_{2,i_2^{\nu}}(\theta^{\, \nu})^T ( \, \theta - \theta^{\, \nu} \, ) \, \right], & \forall \, i \, = \, 1, \cdots, k_1,
\epc \mbox{and} \\ [5pt]
b^{2;i_1^{\nu}}_i(\theta;\theta^{\, \nu}) \, \triangleq \, g(\theta^{\, \nu}) +
\nabla \psi_{1,i_1^{\nu}}(\theta^{\, \nu})^T ( \, \theta - \theta^{\, \nu} \, ) - \psi_{2,i}(\theta), & \forall \, i \, = \, 1, \cdots, k_2.
\end{array} \]
The Lagrangian dual program is
\[
\displaystyle\operatornamewithlimits{\mbox{maximize}}_{\lambda\geq 0, \,\mu\geq 0}\; \zeta_{(i_1^{\nu},i_2^{\nu})}(\lambda,\mu;z^{\, \nu}),\epc\mbox{where}
\]
$${\small  \begin{array}{ll}
\zeta_{(i_1^{\nu},i_2^{\nu})}(\lambda,\mu;z^{\, \nu})  \,\triangleq\,  \displaystyle\operatornamewithlimits{\mbox{minimize}}_{\theta\in \Theta, \, r, s\in \mathbb{R}} \;
\left\{ \begin{array}{cc}
\varphi^{\,\uparrow}(r)  + \varphi^{\,\downarrow}(s) + \displaystyle\frac{c}{2}\,\|\,z-z^{\,\nu}\,\|^2\\[0.1in]
+ \lambda^T\,(\, b^{1;i_2^{\nu}}(\theta;\theta^{\, \nu}) - \mathbf{1} \, r\,) + \mu^T \,( \, b^{2;i_1^{\nu}}(\theta;\theta^{\, \nu}) + \mathbf{1} \, s \,)
\end{array}\right\} \\[0.3in]
=  \underbrace{\displaystyle\operatornamewithlimits{\mbox{minimize}}_{\theta\in \Theta} \;
\left\{\, \lambda^T \, b^{1;i_2^{\nu}}(\theta;\theta^{\, \nu}) + \mu^T \, b^{2;i_1^{\nu}}(\theta;\theta^{\, \nu})  +
\frac{c}{2} \, \|\,\theta-\theta^{\,\nu}\,\|^2\,\right\}}_{\mbox{sub-prob}_\theta^{\,\nu}} \\ [0.3in]
 + \underbrace{\displaystyle\operatornamewithlimits{\mbox{minimize}}_{r\in \mathbb{R}} \left\{ \varphi^{\,\uparrow}(r)-(\mathbf{1}^T \lambda) r
+\frac{c}{2}( r - r_{\nu}  )^2\right\}}_{\mbox{sub-prob}_r^\nu}
 +
\underbrace{\displaystyle \operatornamewithlimits{\mbox{minimize}}_{s\in \mathbb{R}} \left\{\varphi^{\,\downarrow}(s) + (\mathbf{1}^T\mu)s+\frac{c}{2}
( s - s_{\nu}  )^2\right\}}_{\mbox{sub-prob}_s^\nu} .
\end{array}}
$$
By Danskin's Theorem \cite[Theorem 10.2.1]{FacchineiPang2003},
the concave dual objective function $\zeta_{(i_1^{\nu},i_2^{\nu})}(\bullet,\bullet;z^{\, \nu})$ is differentiable with the gradient given by
$$
\nabla_\lambda \zeta_{(i_1^{\nu},i_2^{\nu})}(\lambda,\mu;z^{\,\nu}) \, =\, b^{1;i_2^{\nu}}(\bar{\theta}^{\, \nu};\theta^{\, \nu}) - \mathbf{1} \, \bar{r}_{\nu},
\;
\nabla_\mu \zeta_{(i_1^{\nu},i_2^{\nu})}(\lambda,\mu;z^{\,\nu}) \, = \, b^{2;i_1^{\nu}}(\bar{\theta}^{\, \nu};\theta^{\, \nu})+ \mathbf{1} \, \bar{s}_{\nu},
$$
where $\bar{\theta}^{\,\nu}$, $\bar{r}_\nu$ and $\bar{s}_\nu$ are, respectively, the unique minimizers of
sub-prob$_\theta^{\,\nu}$, sub-prob$_r^\nu$, and sub-prob$_s^\nu$ corresponding to the pair $( \lambda,\mu )$.
Here, we see the benefit of regularizing the scalar variables $r$ and $s$ that ensures the uniqueness of the minimizers
$\bar{r}_\nu$ and $\bar{s}_\nu$.  We can use the concept of proximal mappings to characterize these three minimizers.
For a proper closed convex function $\phi$ defined on $\mathbb{R}^n$ and a scalar $c > 0$, the proximal mapping $P^{\,c}_\phi$ is defined by
$$
P^{\,c}_\phi\,(u) \,\triangleq\, \operatornamewithlimits{\mbox{argmin}}_{v\in \mathbb{R}^n} \left\{\phi(v) + \frac{c}{2}\,\|\,v-u\,\|^2\right\},\quad u\in \mathbb{R}^n,
$$
which is globally Lipschitz continuous \cite[Proposition 12.27]{BauschkeCombettes11}.  One property of this mapping is that a function $\phi$ is convex piecewise linear-quadratic if and only if $P_\phi^{\,c}$ is piecewise linear~\cite[Proposition 12.30]{RockafellarRWets98}. (A function $\phi$ is called piecewise linear-quadratic if ${\rm dom}\,\phi$
can be represented as the union of finitely many polyhedral sets, relative to each of which $\phi(x)$ is given by a quadratic function;
see, e.g., \cite[Definition 10.20]{RockafellarRWets98}.)
Given that $b^{1;i_2^{\nu}}(\bullet;\theta^{\, \nu})$ and $b^{2;i_1^{\nu}}(\bullet;\theta^{\, \nu})$ are smooth mappings,
the function $\zeta_{(i_1^{\nu},i_2^{\nu})}$ is a SC$^1$ function for many interesting instances, in particular if $\Theta$ is a polyhedral set
and the functions $\lambda^T b^{1;i_2^{\nu}}(\bullet;\theta^{\, \nu}) + \mu^T b^{2;i_1^{\nu}}(\bullet;\theta^{\, \nu})$,
$\varphi^{\,\uparrow}$ and $\varphi^{\,\downarrow}$ are all convex piecewise linear-quadratic,
or the proximal mappings of these functions are piecewise smooth.

The following is the globally convergent and locally superlinearly convergent SN method to minimize a convex SC$^1$ function $\phi$ on
a closed convex set $X$; see \cite{PangQi1995} for details.  In the case of the dual program in question, the set $X$ is the
nonnegative orthant in the $(\lambda,\mu)$-space.  Specializations of the method are possible and will be illustrated with a
least-squares piecewise affine regression problem.

\noindent\makebox[\linewidth]{\rule{\textwidth}{1pt}}

\noindent A Semismooth Newton method for $\displaystyle{
\operatornamewithlimits{\mbox{minimize}}_{x \in X}
} \, \phi(x)$ with a SC$^{\, 1}$ function $\phi$.

\noindent\makebox[\linewidth]{\rule{\textwidth}{1pt}}

\noindent
{\bf Initialization.} Given an initial point $x^0 \in X$ and two scalars $\rho, \sigma \in (0,1)$. Set $k=0$.\\[0.1in]
{\bf Step 1.} (Find the search direction.) Given $x^k \in X$, 
pick $V^k\in \partial_C \nabla \phi(x^k)$
and a scalar $\varepsilon_k > 0$.  Solve the following strictly convex quadratic program  in the variable $d$ to obtain $d^k$:
{\small \[
\displaystyle\operatornamewithlimits{\mbox{minimize}}_{d \in X - x^k} \
\nabla \phi(x^k)^T d +
\displaystyle\frac{1}{2}\,d^{\,T}\,(V^k + \varepsilon_k I)\,d
\]}
{\bf Step 2.} (Line search by the Armijo rule.)
Set $x^{k+1} \triangleq x^k + \rho^{m_k} \, d^k$, where $m_k$ is the smallest nonnegative integer $m$ for which
{\small $$
\phi(x^k + \rho^m d^k) \, \leq \, \phi(x^k) +
\sigma \, \rho^m \, \nabla \phi(x^k)^T \, d^k.
$$}
{\bf Step 3.} If $x^{k+1}$ satisfies a prescribed stopping rule, terminate; otherwise,
return to Step~1 with $k$ replaced by $k+1$.  \hfill $\Box$

\noindent\makebox[\linewidth]{\rule{\textwidth}{1pt}}

The major computational cost of the above semismooth Newton method is to find the search direction in Step~1.
To accomplish this task, one may apply any quadratic programming solver.
For the special case when $\psi$ is a difference of convex piecewise affine function as in \eqref{eq:d-affine},
we may instead find the search direction by solving a system of linear equations, leading to a very effective,
provably convergent, overall algorithm for solving problem \eqref{eq:optimization model2} that includes the
problem of piecewise affine regression.

\subsection{A special case where $\psi$ is difference-convex-piecewise-affine}

Let $\psi$ be a difference of convex piecewise affine functions as in \eqref{eq:d-affine}.  Then the
functions $b^{1;i_2^{\nu}}(\bullet;\theta^{\, \nu})$ and $b^{2;i_1^{\nu}}(\bullet;\theta^{\, \nu})$ in \eqref{subproblem1} are affine,
for which we may write (dropping the indices $i_1^{\nu}$ and $i_2^{\nu}$ and the dependence on $\theta^{\, \nu}$) as
$b^1(\theta) \,\triangleq \, B^1\theta -\beta^1$ and $b^2(\theta) \,\triangleq \, B^2\theta-\beta^2$ for some
matrices $B^1 \in \mathbb{R}^{k_1\times m}$ and $B^2\in \mathbb{R}^{k_2\times m}$ and vectors $\beta^1\in \mathbb{R}^{k_1}$
and $\beta^2\in \mathbb{R}^{k_2}$.
Instead of \eqref{MM:subproblem}, we may consider the following alternative subproblems in the MM algorithm with two
additional auxiliary variables $\wh{r}\in \mathbb{R}^{k_1}$ and $\wh{s}\in \mathbb{R}^{k_2}$:
\[\begin{array}{ll}
\displaystyle\operatornamewithlimits{\mbox{minimize}}_{\theta\in \Theta, \,r,\,s,\, \wh{r},\, \wh{s}}\epc  &
 \, \varphi^{\,\uparrow}(r)\,+\,\varphi^{\,\downarrow}(s)\, \\
 & + \,\displaystyle\frac{c}{2}\left[\,\|\,\theta - \theta^{\,\nu}\,\|^2 + (\, r - r_{\nu} \, )^2 + (\, s - s_{\nu} \, )^2 +  \|\,\wh{r} - \wh{r}^{\,\nu}\,\|^2 + \|\,\wh{s} - \wh{s}^{\,\nu}\,\|^2  \,\right]\\[0.1in]
\mbox{subject to} \epc
&  B^1\,\theta - \mathbf{1}r + \wh{r}\,=\,\beta^1,\epc B^2\,\theta + \mathbf{1}s + \wh{s}\,=\, \beta^2,\epc \wh{r}\,\geq \, 0, \epc \wh{s}\,\geq \,0.
\end{array}
\]
The (subsequential) convergence of the MM algorithms discussed in Section~\ref{sec: MM} can be easily extended
to the above formulation with the additional regularization term $\displaystyle\frac{c}{2}\,\left[ \,
\|\,\wh{r} - \wh{r}^{\,\nu}\,\|^2 + \|\,\wh{s} - \wh{s}^{\,\nu}\,\|^2  \,\right]$.
The Lagrangian dual program is
\[
\displaystyle\operatornamewithlimits{\mbox{maximize}}_{\substack{\lambda\in \mathbb{R}^{k_1},\,
\mu\in \mathbb{R}^{k_2}}} \;\xi(\lambda,\mu;\wh{z}^{\,\nu}),
\epc \mbox{where} \epc \wh{z}^{\,\nu}\,\triangleq\, \left(\,\theta^{\,\nu}, r_{\nu}, s_{\nu}, \wh{r}^{\,\nu}, \wh{s}^{\,\nu}\,\right)
\epc \mbox{and}\epc
\]
\vspace{-0.8em}
\[ 
{\small \begin{array}{l}
\xi(\lambda,\mu;\wh{z}^{\,\nu})  \triangleq
\displaystyle\operatornamewithlimits{\mbox{minimize}}_{\substack{
\theta\in \Theta, \, r,  s\in\mathbb{R},\\[0.05in]
 \wh{r}\,\geq\, 0,\, \wh{s}\,\geq \,0}} \; \left\{\begin{array}{ll}
\varphi^{\,\uparrow}(r)  + \varphi^{\,\downarrow}(s) + \displaystyle\frac{c}{2}\,\big[\,\|\,\theta-\theta^{\,\nu}\,\|^2 +(\, r - r_{\nu} \, )^2 \\[0.1in]
+(\, s - s_{\nu} \, )^2
+  \|\,\wh{r} - \wh{r}^{\,\nu}\,\|^2
+ \|\,\wh{s} - \wh{s}^{\,\nu}\,\|^2 \,\big]\\[0.1in]
+\lambda^T(\,B^1\,\theta-\mathbf{1}r + \wh{r}-\beta^1\,)+\mu^T(\,B^2\,\theta + \mathbf{1}s + \wh{s}-\beta^2\,)
\end{array}\right\}\\[0.4in]
 = \, - \lambda^T\beta^1 \,-\, \mu^T\beta^2 \, +\,
\displaystyle\operatornamewithlimits{\mbox{minimize}}_{\theta\in \Theta} \; \left\{\,\lambda^TB^1\,\theta + \mu^T B^2\,\theta
+ \frac{c}{2}\,\|\,\theta-\theta^{\,\nu}\,\|^2\,\right\} \\[0.15in]
 + \, \displaystyle\operatornamewithlimits{\mbox{minimize}}_{r\in \mathbb{R}} \left\{ \varphi^{\,\uparrow}(r)-(\mathbf{1}^T \lambda)\,
r+\frac{c}{2}( r - r_{\nu}  )^2\right\}  + \displaystyle \operatornamewithlimits{\mbox{minimize}}_{s\in \mathbb{R}} \left\{\varphi^{\,\downarrow}(s)+(\mathbf{1}^T\mu)\,s+\frac{c}{2}( s - s_{\nu} )^2\right\}
\\[0.15in]
+ \,\displaystyle\operatornamewithlimits{\mbox{minimize}}_{\wh{r}\,\geq \,0}\; \left\{\lambda^T\,\wh{r}\,+\,\frac{c}{2}\,\|\,\wh{r}-\wh{r}^{\,\nu}\,\|^2\right\} \, +\,
\displaystyle \operatornamewithlimits{\mbox{minimize}}_{\wh{s}\,\geq \,0}\; \left\{\mu^T\,\wh{s}\,+\,\frac{c}{2}\,\|\,\wh{s}-\wh{s}^{\,\nu}\,\|^2\right\}.
\end{array}}
\] 
As in the previous discussions, the concave function $\xi(\bullet,\bullet;\wh{z}^{\, \nu})$ is continuously differentiable.
The proximal mappings associated with $\wh{r}$ and $\wh{s}$ are essentially the projections onto the nonnegative orthant, which must be
semismooth \cite[Theorem~4.5.2 \& Proposition~7.4.6]{FacchineiPang2003}.
If $\Theta$ is a polyhedral set and the proximal mappings of $\varphi^{\,\uparrow}$ and $\varphi^{\,\downarrow}$ are also semismooth,
the dual program is  an unconstrained SC$^1$ problem.  Step~1 in the SN method is thus to solve an unconstrained strictly convex quadratic program,
or equivalently, to solve a linear equation.  Furthermore, the minimization in $\theta$ within the dual function $\xi(\lambda,\mu;\wh{z}^{\,\nu})$ is the projection
onto the set $\Theta$, which is easy if $\Theta$ is a simple set; e.g., containing upper and lower bounds only.  In this case, the bulk of the
computational effort per iteration of the combined MM and SN methods 
essentially consists of solving systems of linear equations in the $(\lambda,\mu)$-space within the fast convergent SN method.

\section{Numerical Experiments} \label{sec:numerical}

In order to demonstrate the effectiveness of the proposed nonsmooth MM+SN method for solving some non-standard statistical estimation problems,
we conduct various numerical experiments on an unconstrained least-squares continuous piecewise affine regression problem by taking the error function
$\varphi(y,t) = \frac{1}{2}(t-y)^2$ and the piecewise affine model $\psi$ as in \eqref{eq:d-affine}.  Besides providing evidence of the promise
of the MM+SN method itself (see Table~\ref{table1} and Figures~\ref{figure: MM speed} and \ref{figure: Newton speed} for summaries
of its performance statistics), our experiments also aim
to establish the superiority of the piecewise affine statistical model over the standard linear model. 

Specifically, the optimization problem we consider
in this section is the following:
\[{\small
\displaystyle{
\operatornamewithlimits{\mbox{minimize}}_{\theta}
} \ \left\{ \begin{array}{l}
\displaystyle{
f_N(\theta)\triangleq  \frac{1}{N}
} \displaystyle{
\sum_{s=1}^N
}  \left[ \, y_s - \displaystyle{
\max_{1 \leq i \leq k_1}
} \, \left\{  ( \, a^i \, )^T x^s + \alpha_i \, \right\} - \displaystyle{
\max_{1 \leq i \leq k_2}
} \, \left\{  ( \, b^i \, )^T x^s + \beta_i \, \right\} \, \right]^2  \\ [0.15in]
\qquad\qquad  + \, \gamma_N \, \displaystyle{
\sum_{i=1}^m
} \, \left[  \, | \, \theta_i \, | - p_i(\theta_i) \, \right]
\end{array} \right\}}\]
where $\theta=\left\{ \left( a^i, \alpha_i \right)_{i=1}^{k_1}, \left( b^i, \beta_i \right)_{i=1}^{k_2} \right\}\in \mathbb{R}^{(k_1+k_2)(d+1)}$ and each $p_i$ is a differentiable convex function.  
For this optimization problem which must have a global minimizer,
Proposition~\ref{pr:properties of d-stat} yields some
interesting properties of its d-stationary points, provided that each univariate $p_i(\theta_i)$ is piecewise linear-quadratic.
All our computations are done in Matlab on Mac OS X with 1.7 GHz Intel Core i7 and 8 GB RAM.  We employ the randomized version of
the MM algorithm to solve the problem with $\varepsilon = 10^{-4}$ for the ``$\varepsilon$-argmax'' expansion.
The iterations of the MM algorithm are terminated if
$\displaystyle\frac{|\,f_N(\theta^{\,\nu+1}) - f_N(\theta^{\,\nu})\,|}{\max\,(1,\, |\,f_N(\theta^{\,\nu})\,|\,)}\,\leq \,10^{-4}$;
the iterations of the inner SN algorithm are terminated if
$\|\nabla \xi(\lambda^k, \mu^k;\wh{z}^{\,\nu})\|\leq \max\left(\,10^{-6},\,10^{-2}\,|\,f_N(\theta^{\,\nu+1}) - f_N(\theta^{\,\nu})\,|\,\right)$,
where $\xi$ is the objective of the $\nu$-th dual subproblem.

\subsection{Synthetic data} In this set of runs we consider two simulation examples. In both examples the penalty parameter $\gamma_N$ is taken to be 0.

\noindent
{\bf Example 1.}  We consider a 2-dimensional convex piecewise linear
model $$
y \, = \, \max\left\{
x_1 + x_2\,,\,
x_1 - x_2\,,\,
-2x_1 + x_2\,,\,
-2x_1 - x_2\,\right\} + \varepsilon,
$$
whose 3-D plot is given in Figure \ref{fig:convex model}. Although this is a convex model, the resulting least-squares problem is still nonconvex.
Random samples of size $N =50, 100, 200, 500$ are generated uniformly from $[-1,1]\times [-1,1]$, respectively, and $\varepsilon$ is drawn from
a uniform distribution in $[ \, -0.5,\, 0.5 \,]$.  The objective values of the computed solutions for $k_1=4$ and $k_2=0$ over $500$ runs
(with each run corresponding to one initial point) by the MM+SN algorithm are shown in Figure~\ref{fig:convex landscape}.
Two observations are made from this figure.  One, the d-stationary values (which are also locally minimum values) are clustered into a
few distinguished groups.  In a recent paper \cite{CuiPang18}, we provide theoretical justification of  this observation by showing
that there are only finitely many d-stationary values for several classes of piecewise programs, of which the least-squares piece affine
regression problem is a special instance.  Two, with the increase of the sample size, more and more initial points lead to the solutions
that appear to yield the globally minimum value.  The total numbers of initial points that lead to the smallest objective
values with respect to different sample sizes are listed in Figure~\ref{percentage}.  In Figure~\ref{fig:cvx_result}, we plot
the solutions given by the smallest objective values of different sample sizes in Figure \ref{fig:convex landscape}.
These solutions almost recover the  original convex regression model, thus (empirically) validating the MM+SN methodology for a nonconvex
minimization problem in the recovery of a convex statistical model.

\begin{figure}[h]
\begin{minipage}{.4\textwidth}
\centering
\includegraphics[width=.8\textwidth]{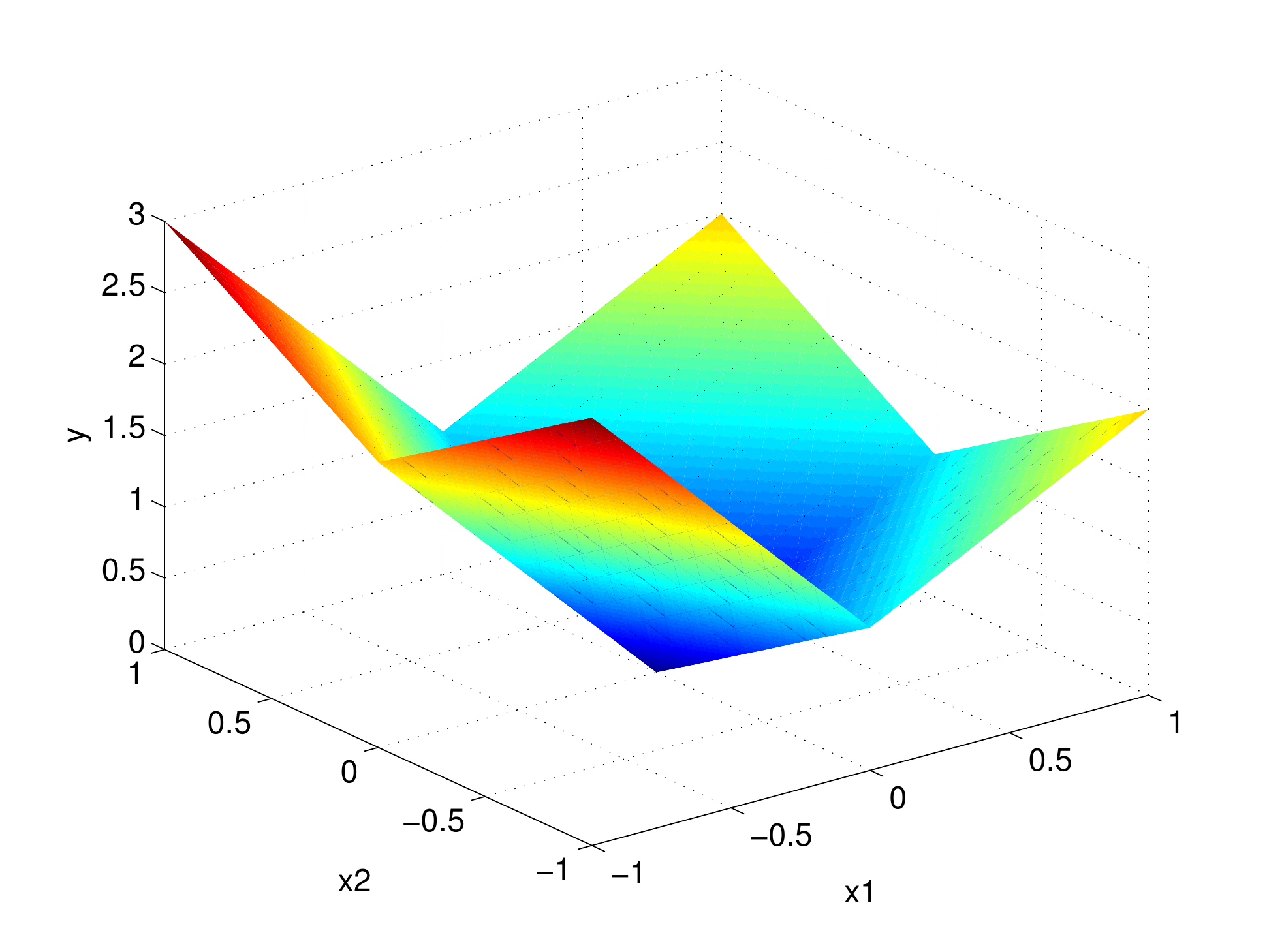}
\caption{\scriptsize{The 3-D plot of Example 1.}}
\label{fig:convex model}
\end{minipage}%
\epc
\begin{minipage}{.55\textwidth}
\centering
\includegraphics[width=0.5\textwidth]{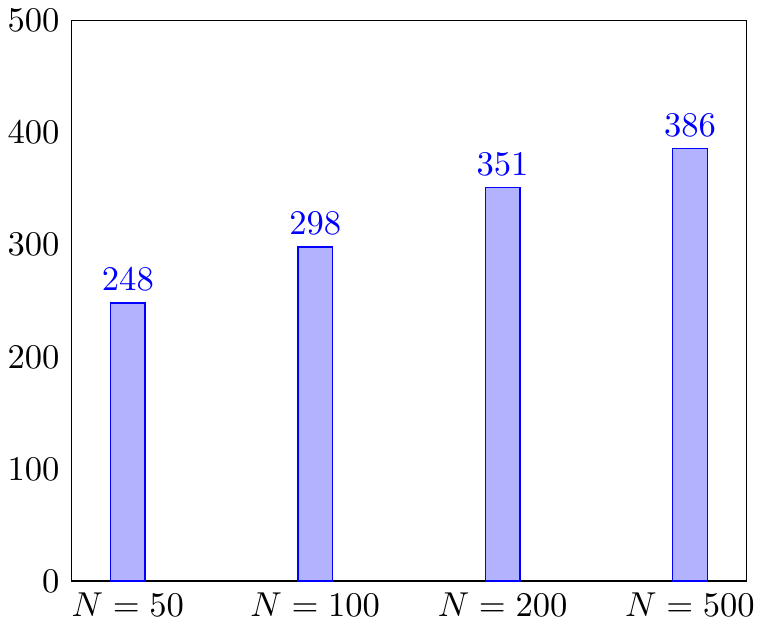}
\caption{\scriptsize{The number of initial points that lead to the smallest objective values.}}
\label{percentage}
\end{minipage}%
\end{figure}

\begin{figure}[h]
\begin{minipage}{.25\textwidth}
\centering
\includegraphics[width=\textwidth]{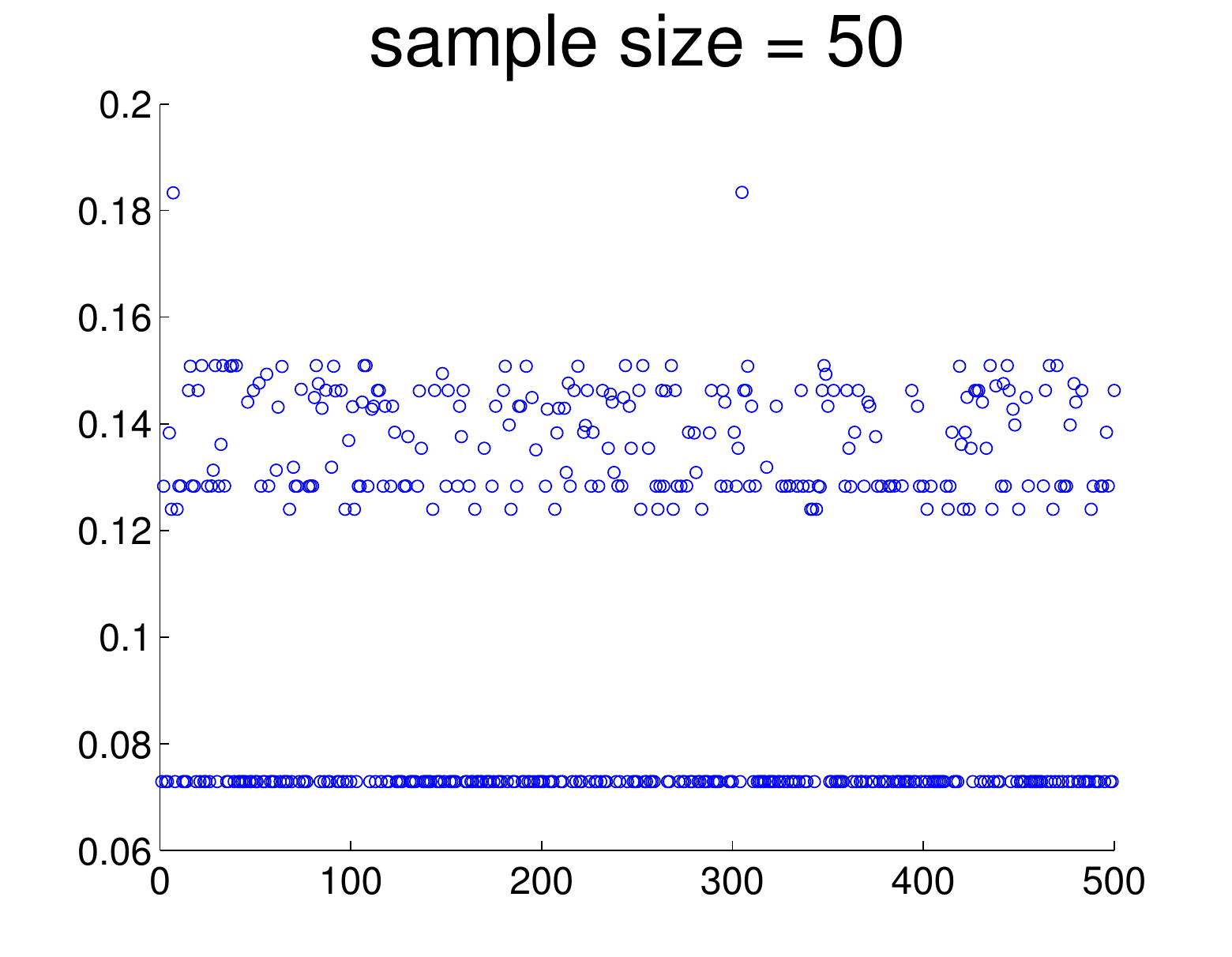}
\end{minipage}%
\begin{minipage}{.25\textwidth}
\centering
\includegraphics[width=\textwidth]{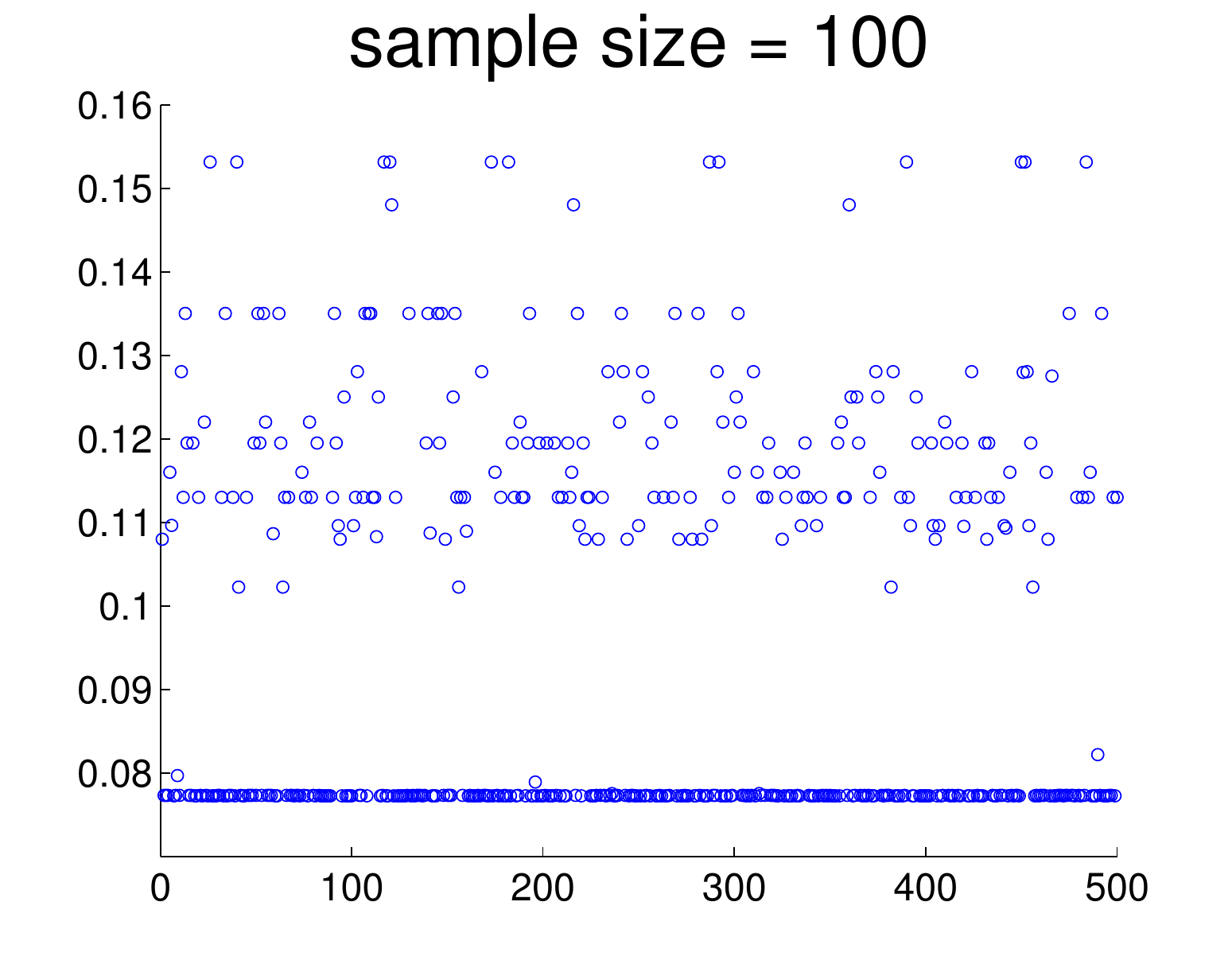}
\end{minipage}
\begin{minipage}{.25\textwidth}
\centering
\includegraphics[width=\textwidth]{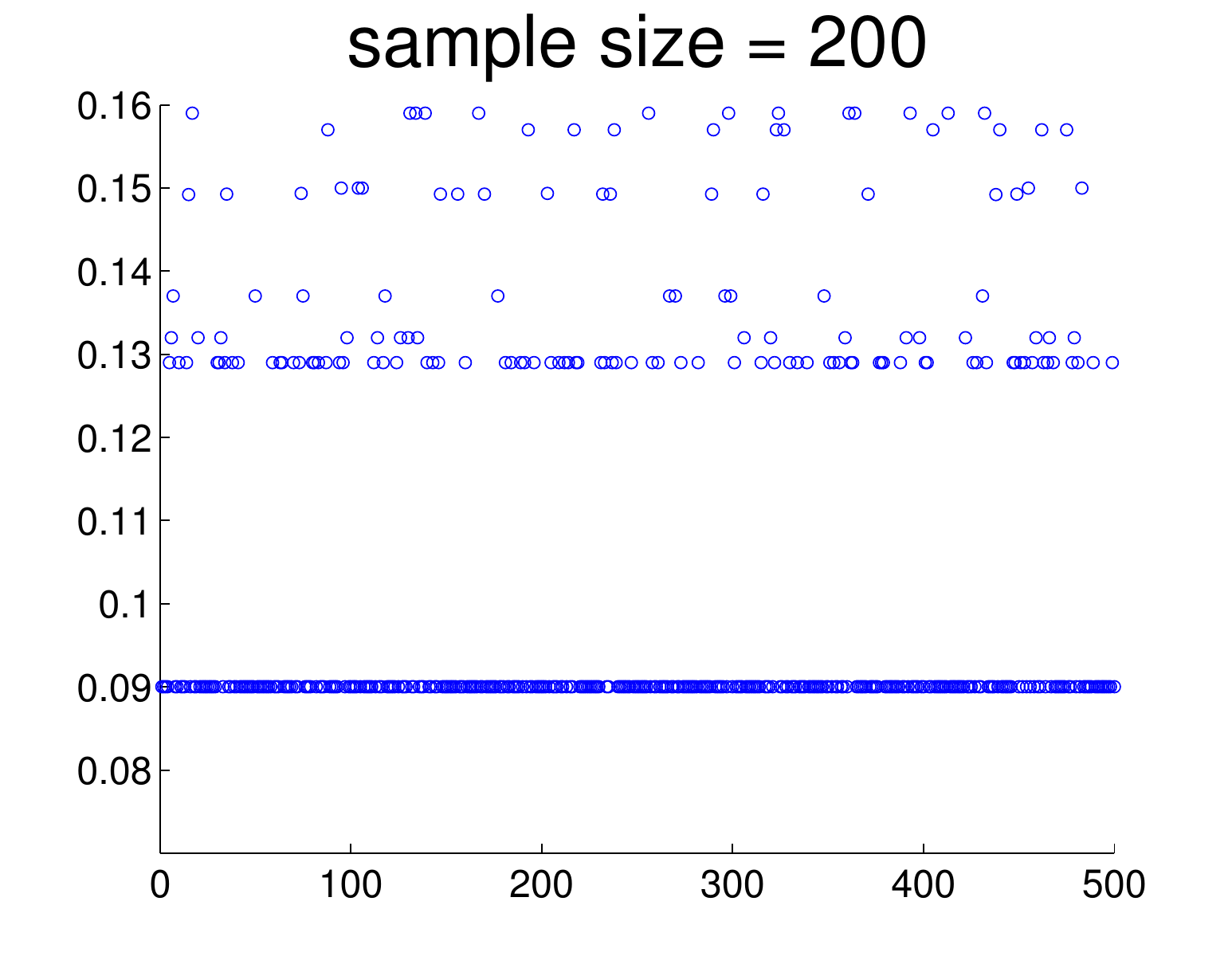}
\end{minipage}
\begin{minipage}{.25\textwidth}
\centering
\includegraphics[width=\textwidth]{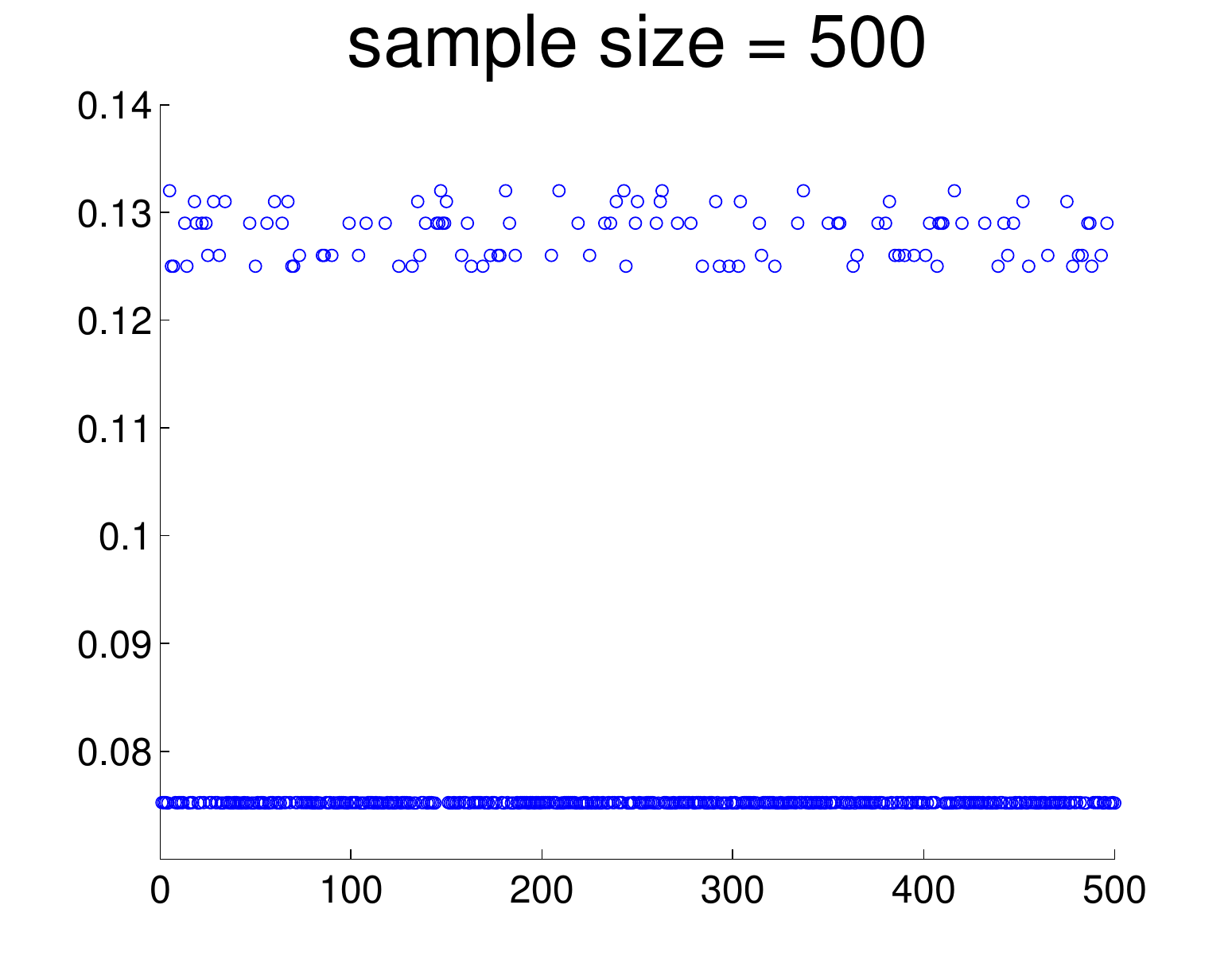}
\end{minipage}
\caption{\small{The objective values computed by the MM+SN algorithm of Example 1.}}
\label{fig:convex landscape}
\end{figure}

\begin{figure}[h]
\centering
\begin{minipage}{.24\textwidth}
\centering
\includegraphics[width=1\textwidth]{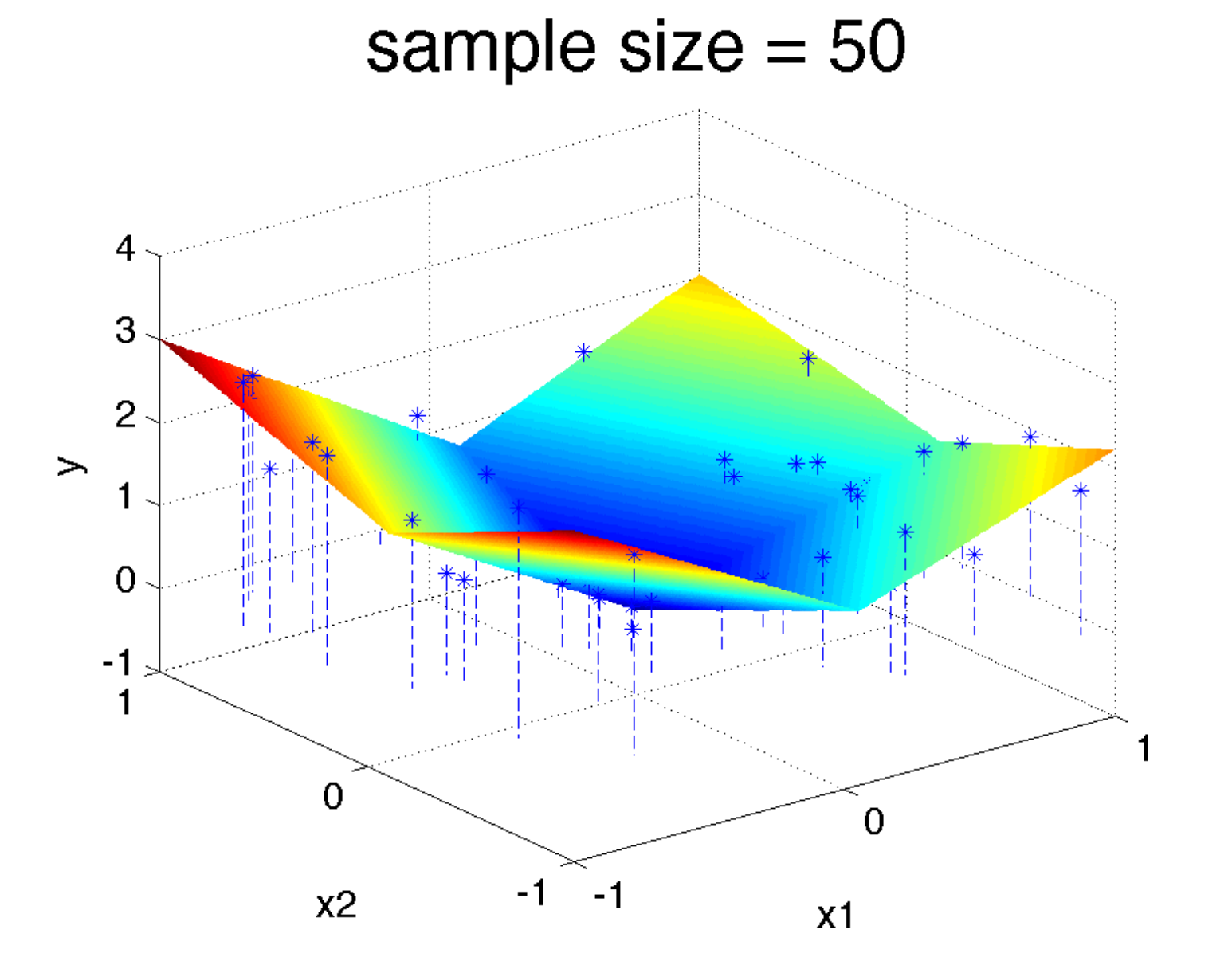}
\end{minipage}%
\begin{minipage}{.24\textwidth}
\centering
\includegraphics[width=1\textwidth]{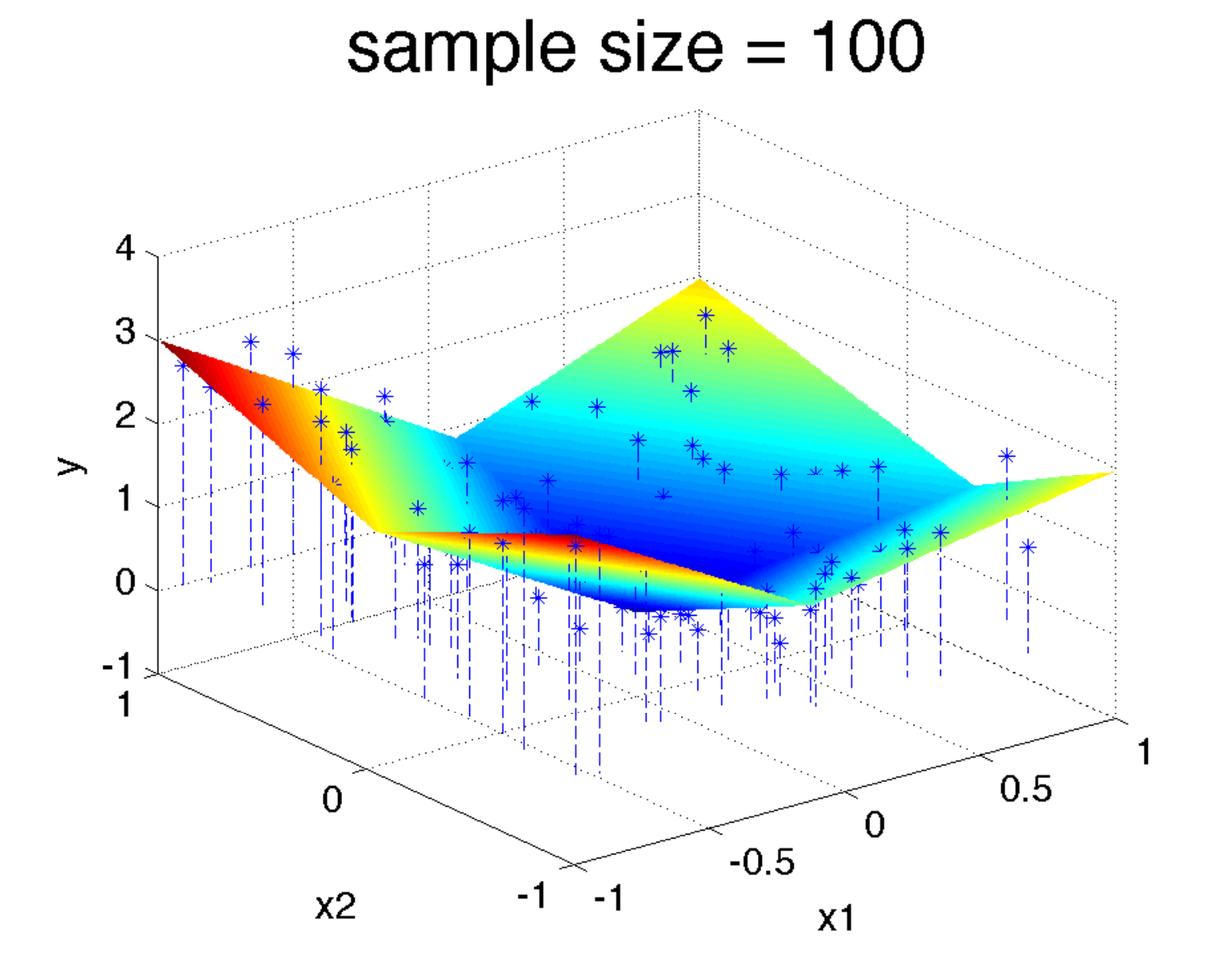}
\end{minipage}
\begin{minipage}{.24\textwidth}
\centering
\includegraphics[width=1\textwidth]{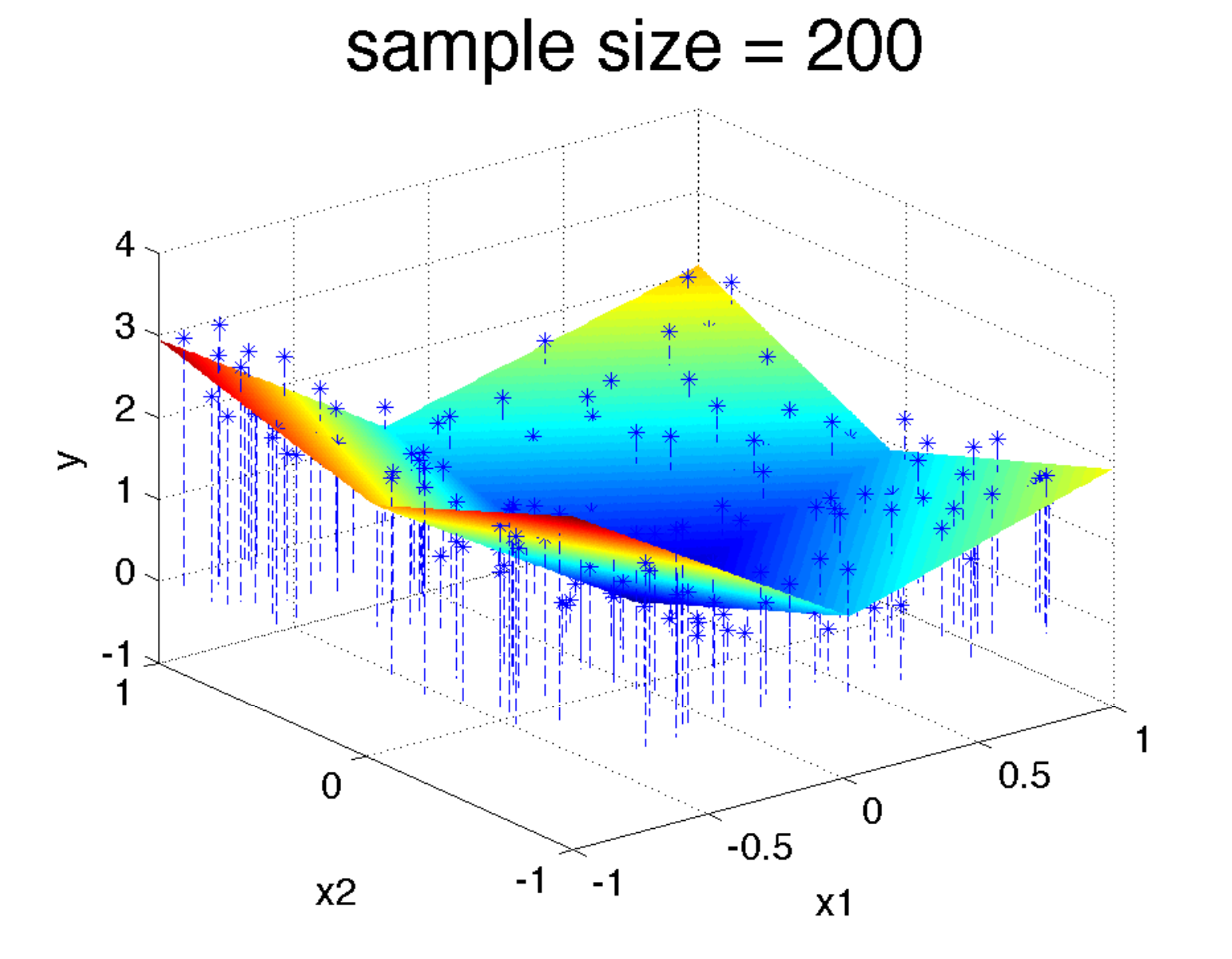}
\end{minipage}
\begin{minipage}{.24\textwidth}
\centering
\includegraphics[width=1\textwidth]{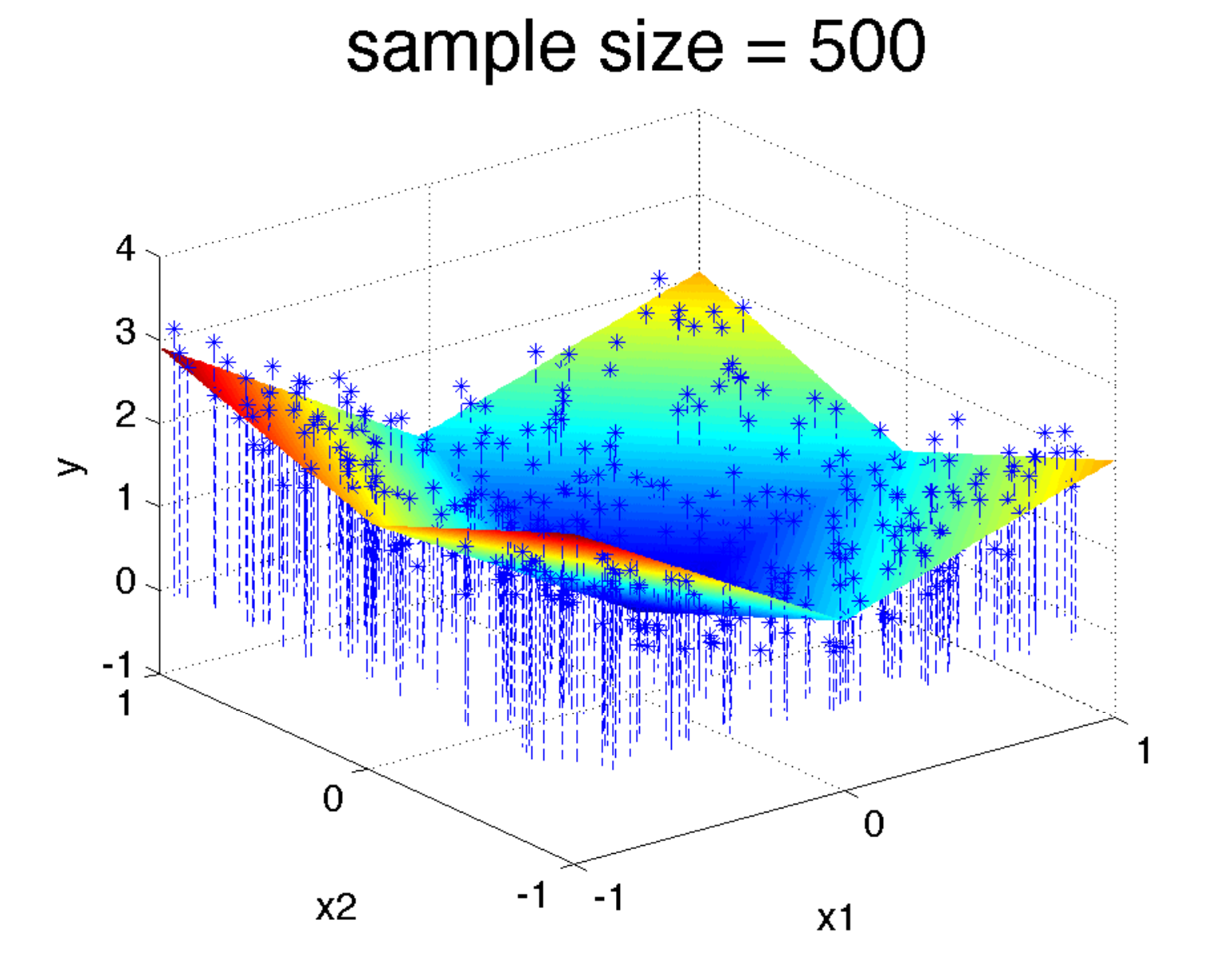}
\end{minipage}
\caption{{\small The solutions of Example 1 with the smallest objective values.}}
\label{fig:cvx_result}
\end{figure}

\noindent
{\bf Example 2.} Next we consider a 2-dimensional nonconvex piecewise affine model
$$
y = \max\left\{x_1-2x_2\,,\, -2x_1+ x_2 + 1\right\} - \max\left\{3x_1-2x_2\,,\, 2x_1+5x_2\right\} + \varepsilon,
$$
whose 3D-plot is given in Figure \ref{figure:noncvx model}. The projection of this figure onto $y = 0$ is demonstrated in
Figure~\ref{figure:noncvx projection}.  As in Example~1, random samples of size $N = 50, 100, 200, 500$ are generated
uniformly from $[-1,1]\times [-1,1]$, and $\varepsilon$ is drawn from a uniform distribution in $[-0.5,\, 0.5]$.
The objective values of the computed solutions over $500$ runs are shown in Figure~\ref{figure:noncvx obj}.
Similar observations to Example 1 can be made with regards to the concentration of the (computed) stationary values.
We also  plot the solutions corresponding to smallest objective values for different sample sizes in Figure \ref{figure:noncvx:result}.

\begin{figure}[H]
\begin{center}
\begin{minipage}{.3\textwidth}
\centering
\includegraphics[width=1.0\textwidth]{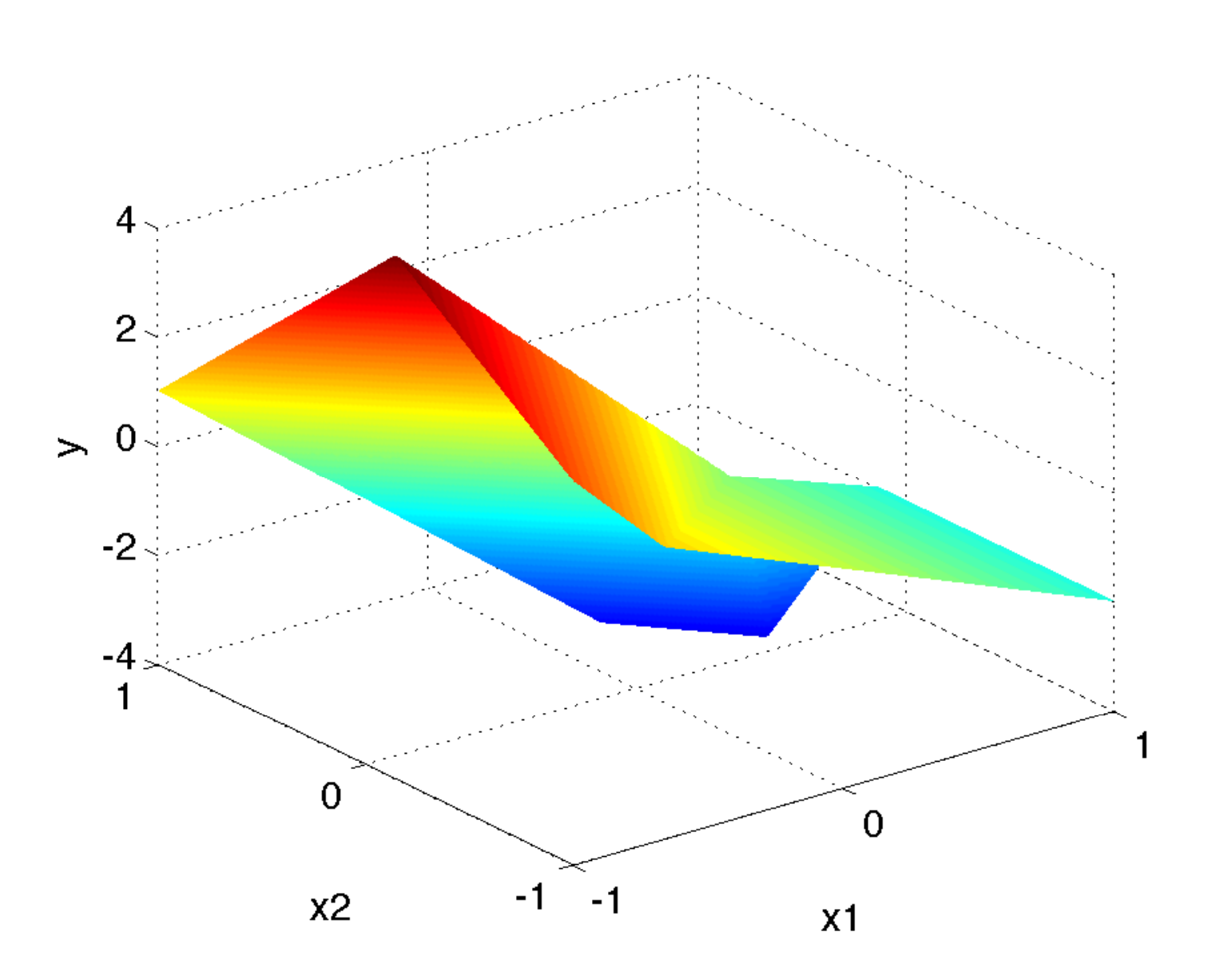}
\caption{\scriptsize{The 3-D plot of Example 2.}}
\label{figure:noncvx model}
\end{minipage}
\;\,
\begin{minipage}{.3\textwidth}
\centering
\includegraphics[width=1\textwidth]{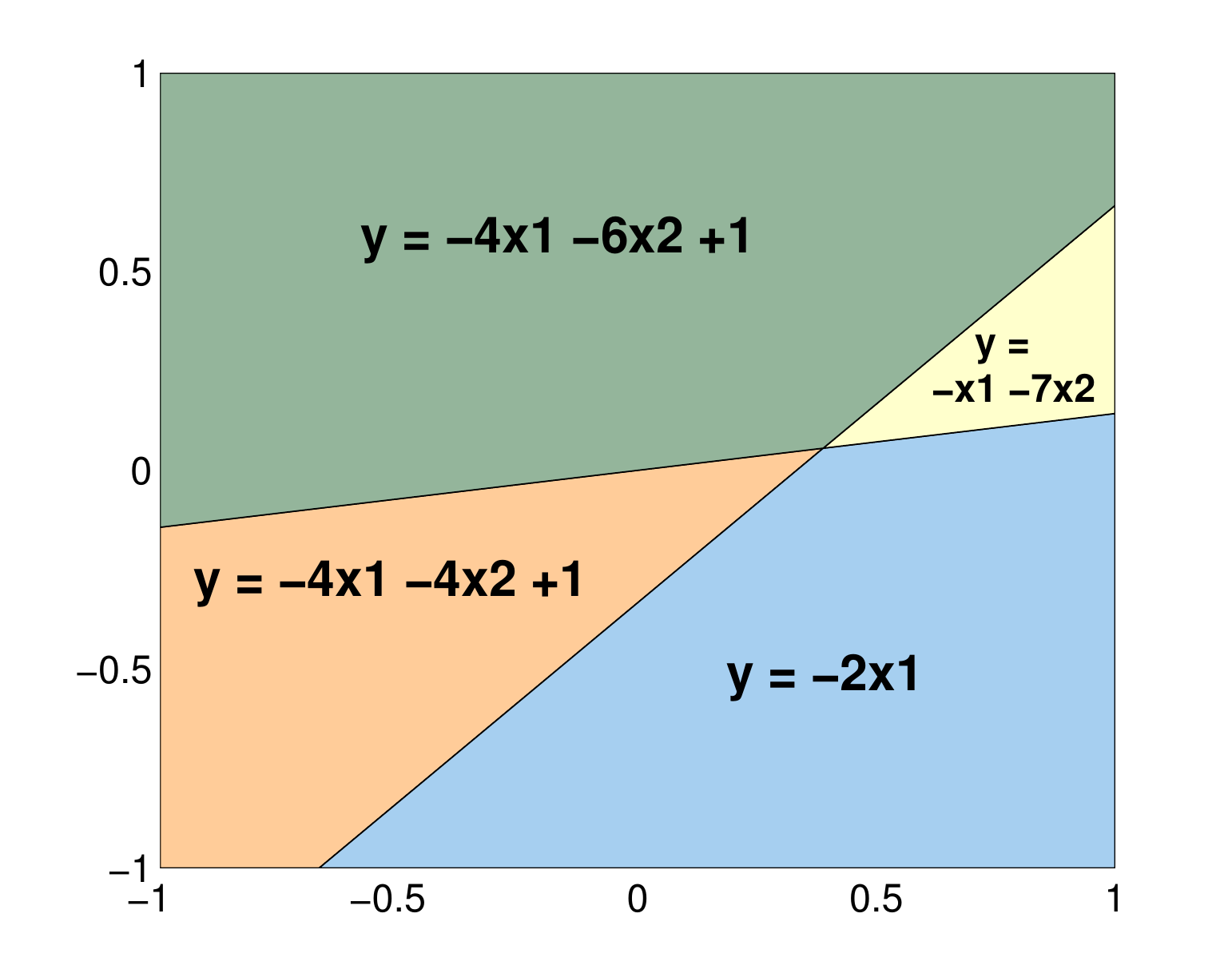}
\caption{\scriptsize{The projection of Example 2 onto $y=0$.}}
\label{figure:noncvx projection}
\end{minipage}
\;\,
\begin{minipage}{.3\textwidth}
\centering
\includegraphics[width=0.9\textwidth]{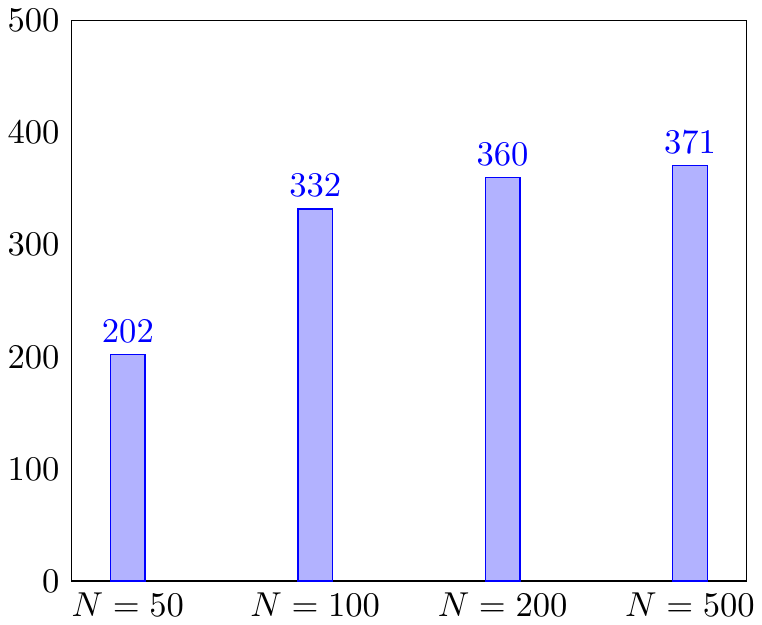}
\caption{\scriptsize{The number of initial points that lead to the samllest objective values.}}
\label{figure:noncvx landscape}
\end{minipage}
\end{center}
\end{figure}

\begin{figure}[H]
\begin{minipage}{.25\textwidth}
\centering
\includegraphics[width=\textwidth]{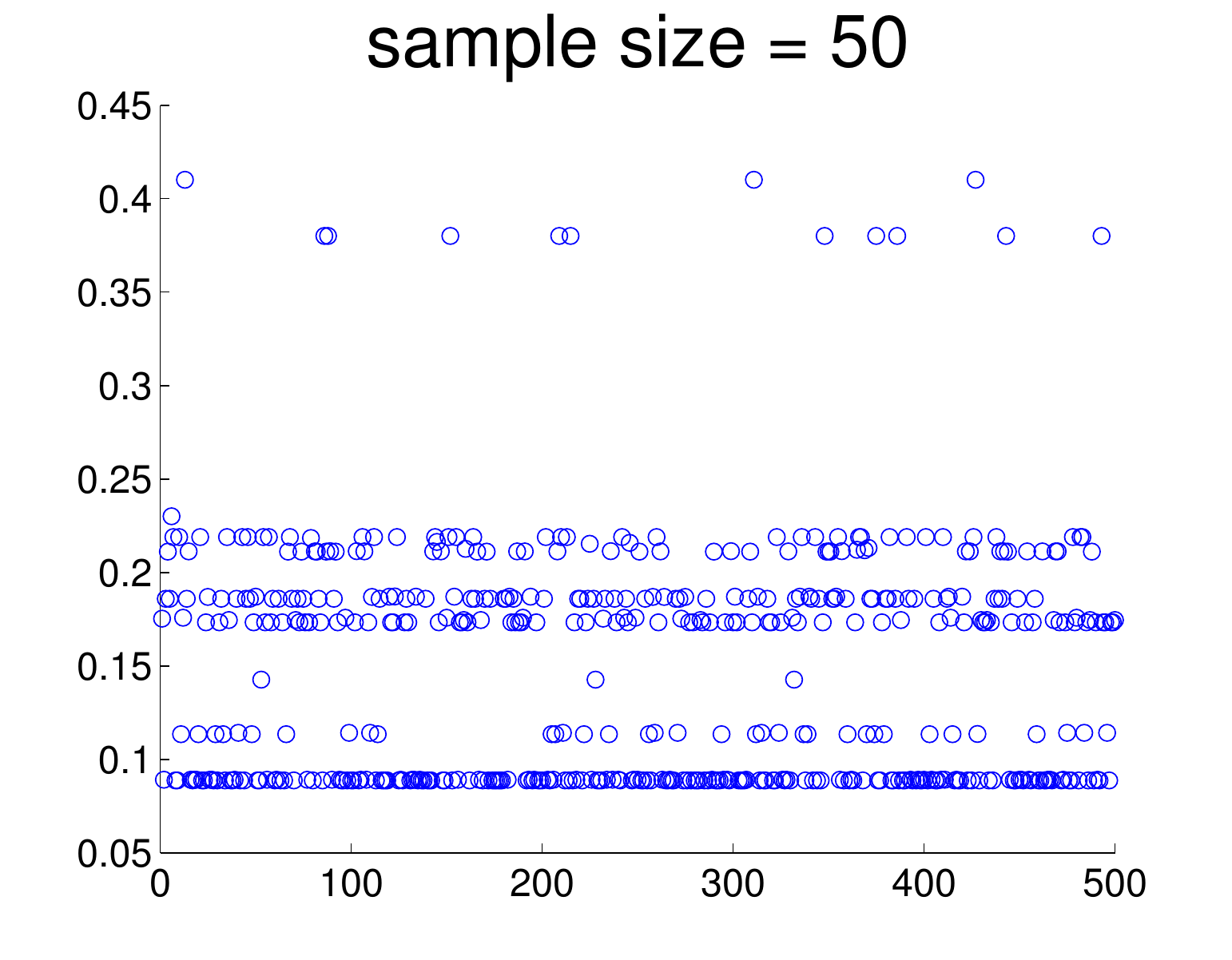}
\end{minipage}%
\begin{minipage}{.25\textwidth}
\centering
\includegraphics[width=\textwidth]{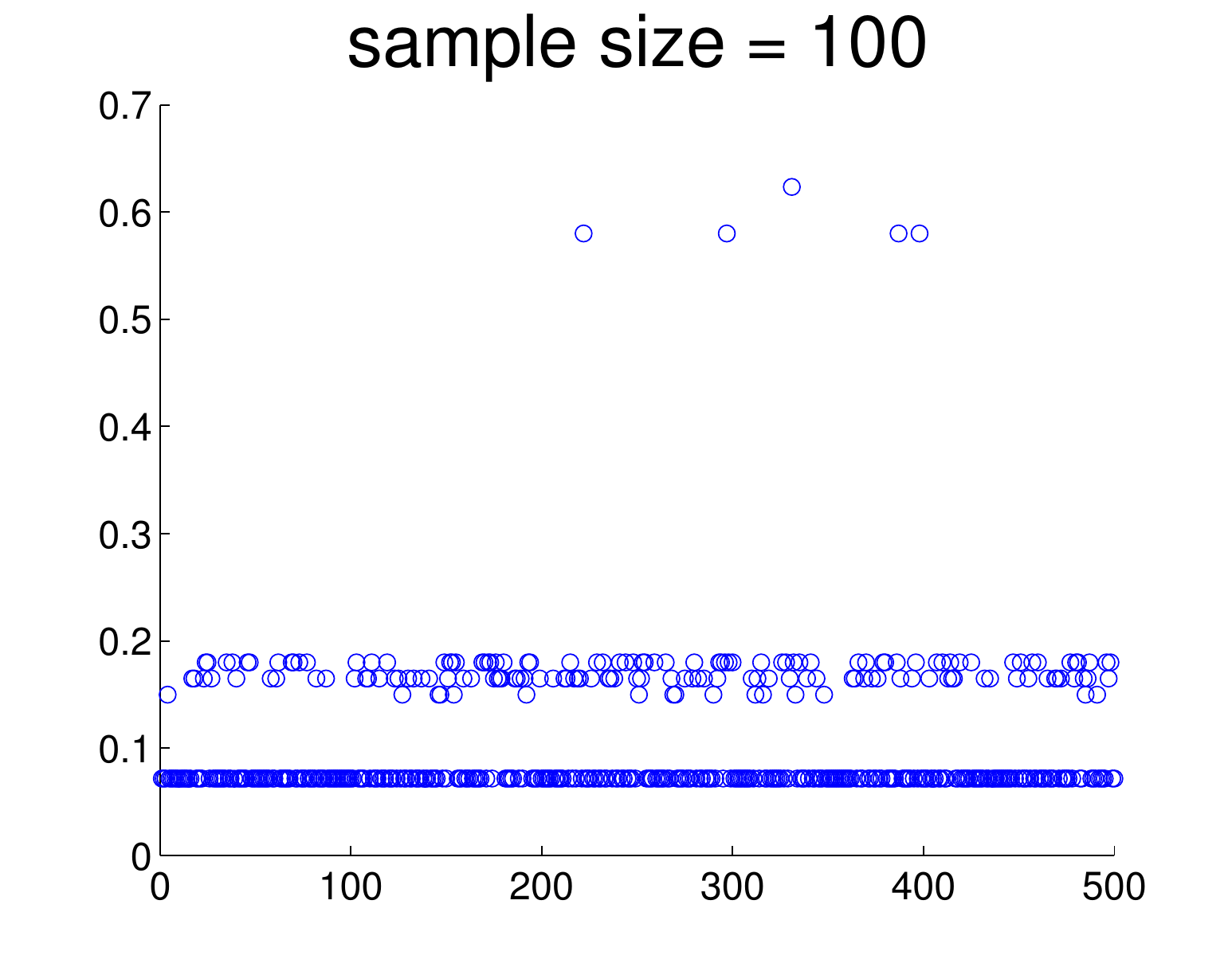}
\end{minipage}
\begin{minipage}{.25\textwidth}
\centering
\includegraphics[width=\textwidth]{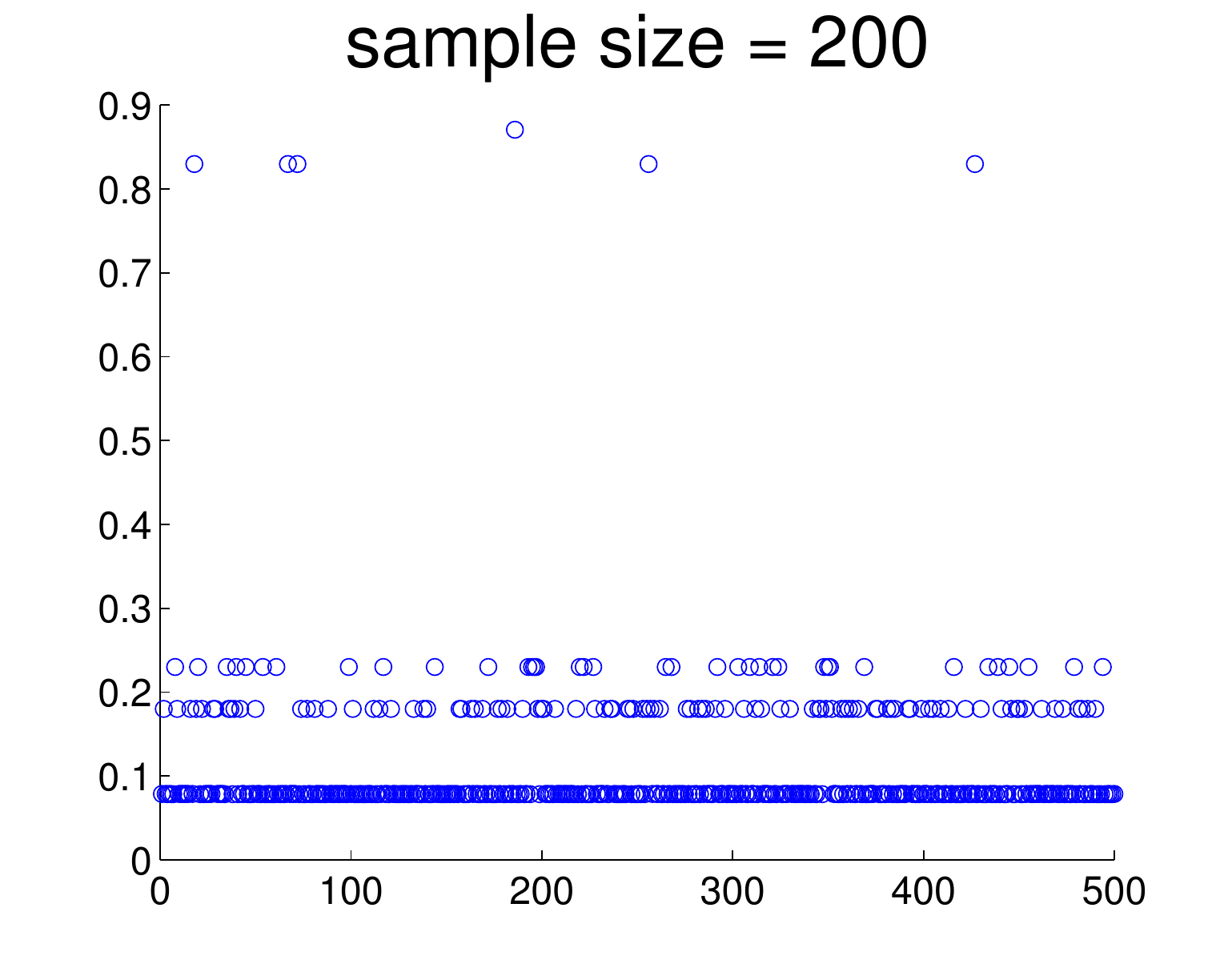}
\end{minipage}
\begin{minipage}{.25\textwidth}
\centering
\includegraphics[width=\textwidth]{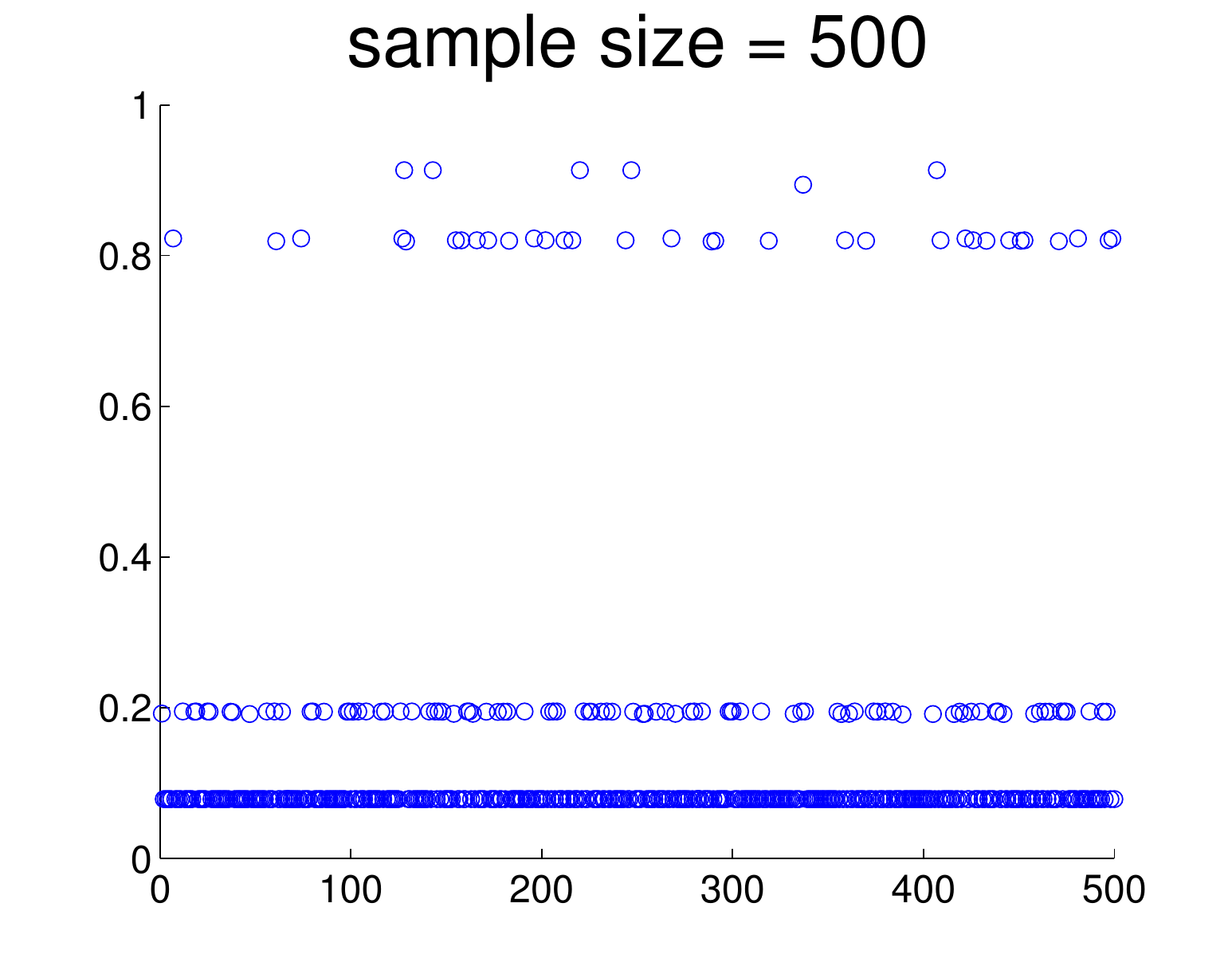}
\end{minipage}
\caption{\small{The objective values computed by the MM+SN algorithm of Example 2.}}
\label{figure:noncvx obj}
\end{figure}

\begin{figure}[H]
\centering
\begin{minipage}{.24\textwidth}
\centering
\includegraphics[width=1\textwidth]{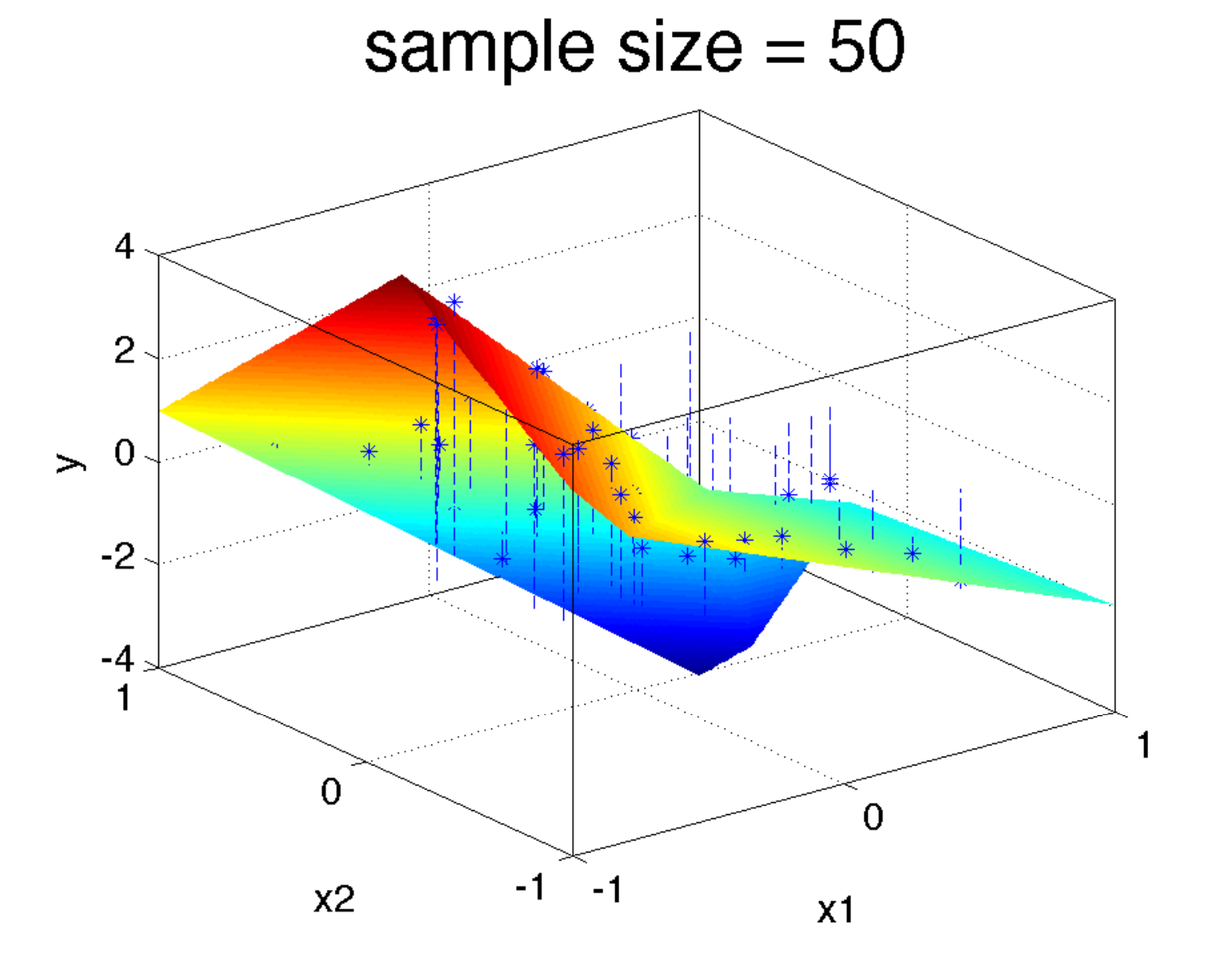}
\end{minipage}%
\begin{minipage}{.24\textwidth}
\centering
\includegraphics[width=1\textwidth]{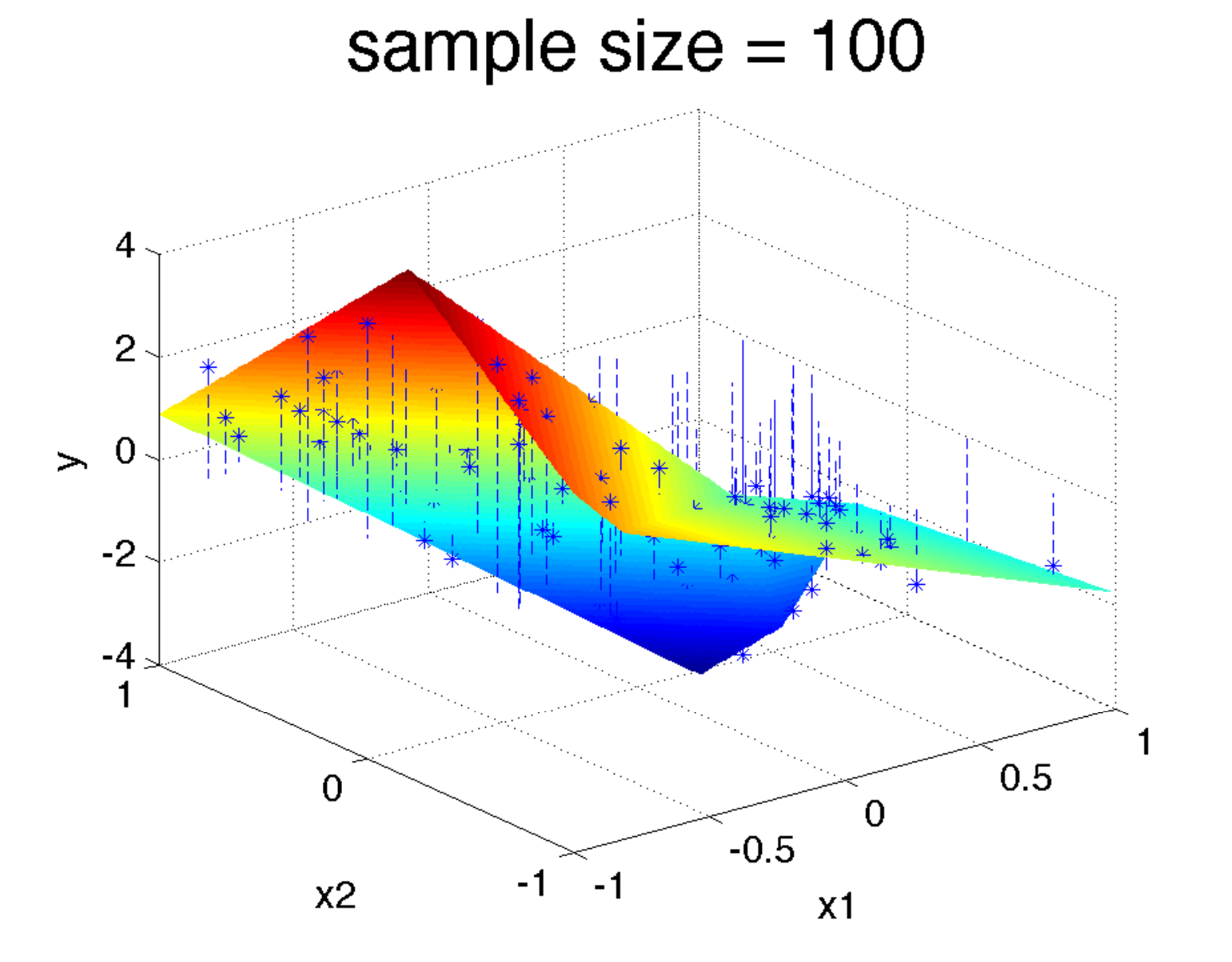}
\end{minipage}
\begin{minipage}{.24\textwidth}
\centering
\includegraphics[width=1\textwidth]{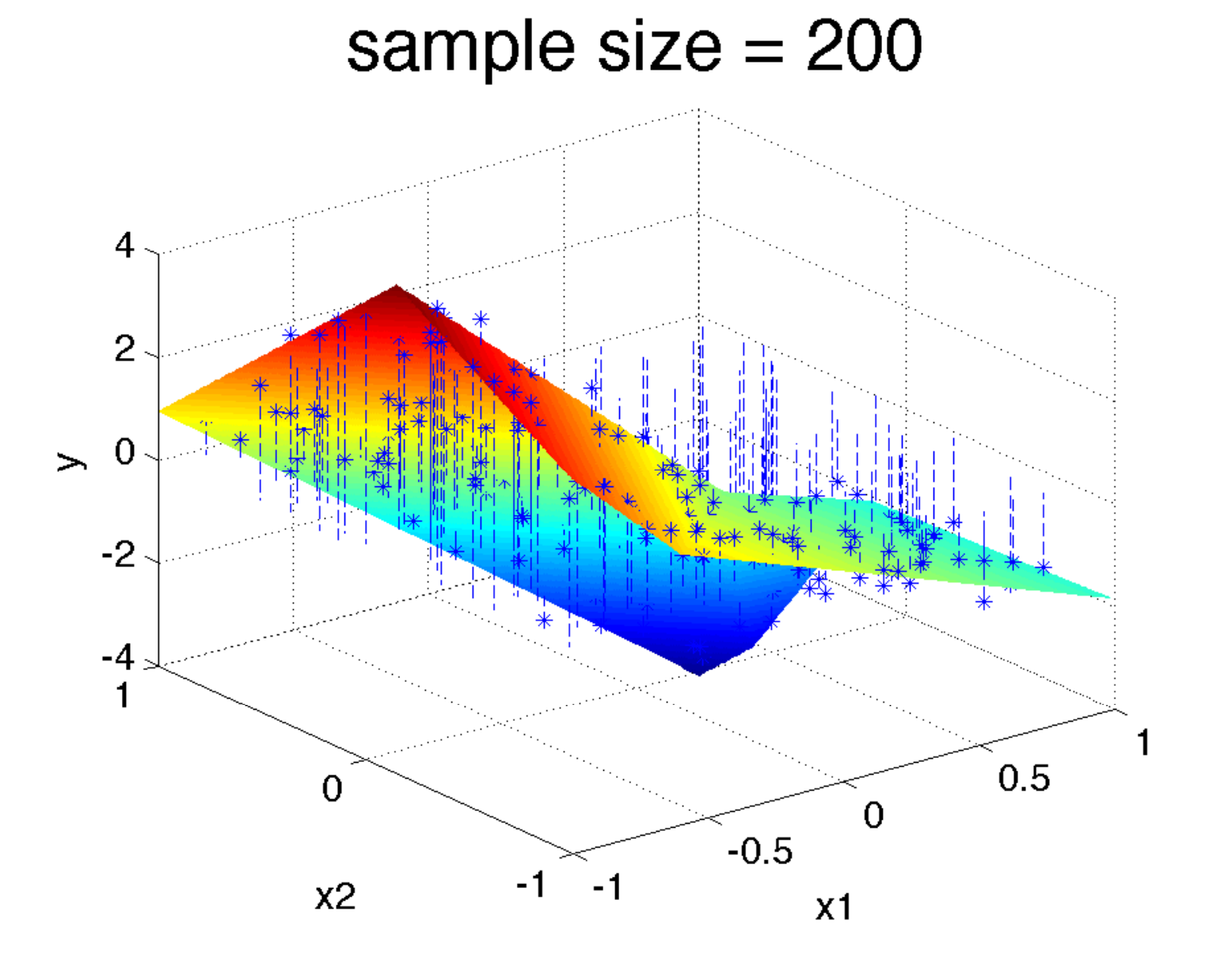}
\end{minipage}
\begin{minipage}{.24\textwidth}
\centering
\includegraphics[width=1\textwidth]{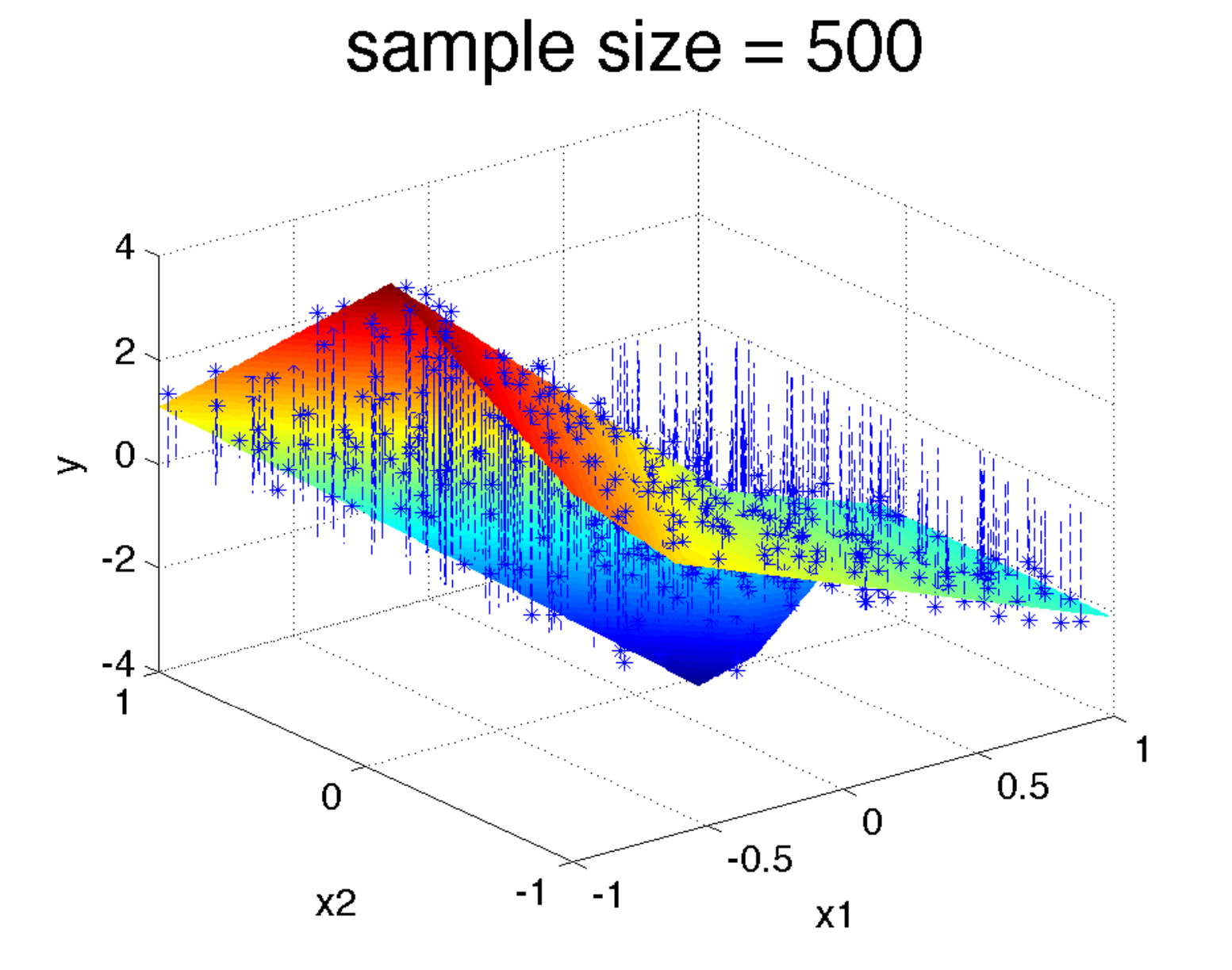}
\end{minipage}
\caption{\small{The solutions of Example 2 with the smallest objective values.}}
\label{figure:noncvx:result}
\end{figure}

\subsection{Real data}\label{section: pwa regression: real data}

We also conduct the experimental evaluation of the proposed  algorithm on the real datasets obtained from the UCI Repository of machine learning
databases \url{https://archive.ics.uci.edu/ml/index.php}.
We first list the problem characteristics and report the performance of the MM+SN method averaged over the pairs $(k_1,k_2)$
employed in the experiments, i.e.,\ satisfying $1\leq k_1, k_2\leq 4$ in Table \ref{table1}, where the two columns under ``iterations'' present, respectively, the number of iterations
of the MM algorithm and the total numbers of iterations of the SN algorithm.
For the first four problems where the sample sizes $N$ are much larger than the dimensions $d$,  no sparse regularization term is added to the model.
For the other two problems where $d$ is relatively large, we take the SCAD function as the sparsity inducing penalty function.
The sparse penalty parameter is estimated by  5-fold cross-validation.
One can see the efficiency of the proposed MM+SN method, where it usually takes no more than $10$
SN iterations for each MM subproblem.  As an illustration,
Figures~\ref{figure: MM speed} and \ref{figure: Newton speed} exhibit the performance of the MM algorithm (objective values vs iteration number) and the
SN algorithm ($\log_{10}\|\nabla \xi(\lambda^k, \mu^k;\wh{z}^{\,\nu})\|$ vs iteration number) for solving the ``auto MPG'' problem with $(k_1,k_2) = (2,2)$.

\begin{table}[H]
\centering
\begin{footnotesize}
\begin{tabular}{|c | c c| c  c| c |}
\hline
	
\multirow{2}{*}{ problem name } & \multirow{2}{*}{$N$} & \multirow{2}{*}{$d$} & \multicolumn{2}{|c|}{iterations} & \multirow{2}{*}{time} \\
\cline{4-5}
 & & &  MM & SN & \\\hline

banknote authentication
     &1372  &  4     & 9     &   72   & 3.5\,s\\[2pt]
 \hline
concrete compressive  strength
     &1030  &  8    & 23    &   197  & 10.8\,s\\[2pt]
 \hline
auto MPG
     &392  &  7     & 25    &    185  & 3.9\,s \\[2pt]
 \hline
 airfoil self-noise & 1503  &  5     & 18    &  156    & 12.3\,s \\[2pt]
 \hline
  Libras Movement  & 360 &  91     & 22    &    187  & 14.7\,s \\[2pt]
 \hline
 Communities and Crime & 2215 &  147    & 31    &    251  & 43.7\,s \\[2pt]
 \hline
\end{tabular}
\end{footnotesize}
 \caption{\small The performance of the MM+SN algorithm} \label{table1}
 \end{table}

\begin{figure}[H]
\begin{center}
\begin{minipage}{.47\textwidth}
\centering
\includegraphics[width=0.6\textwidth]{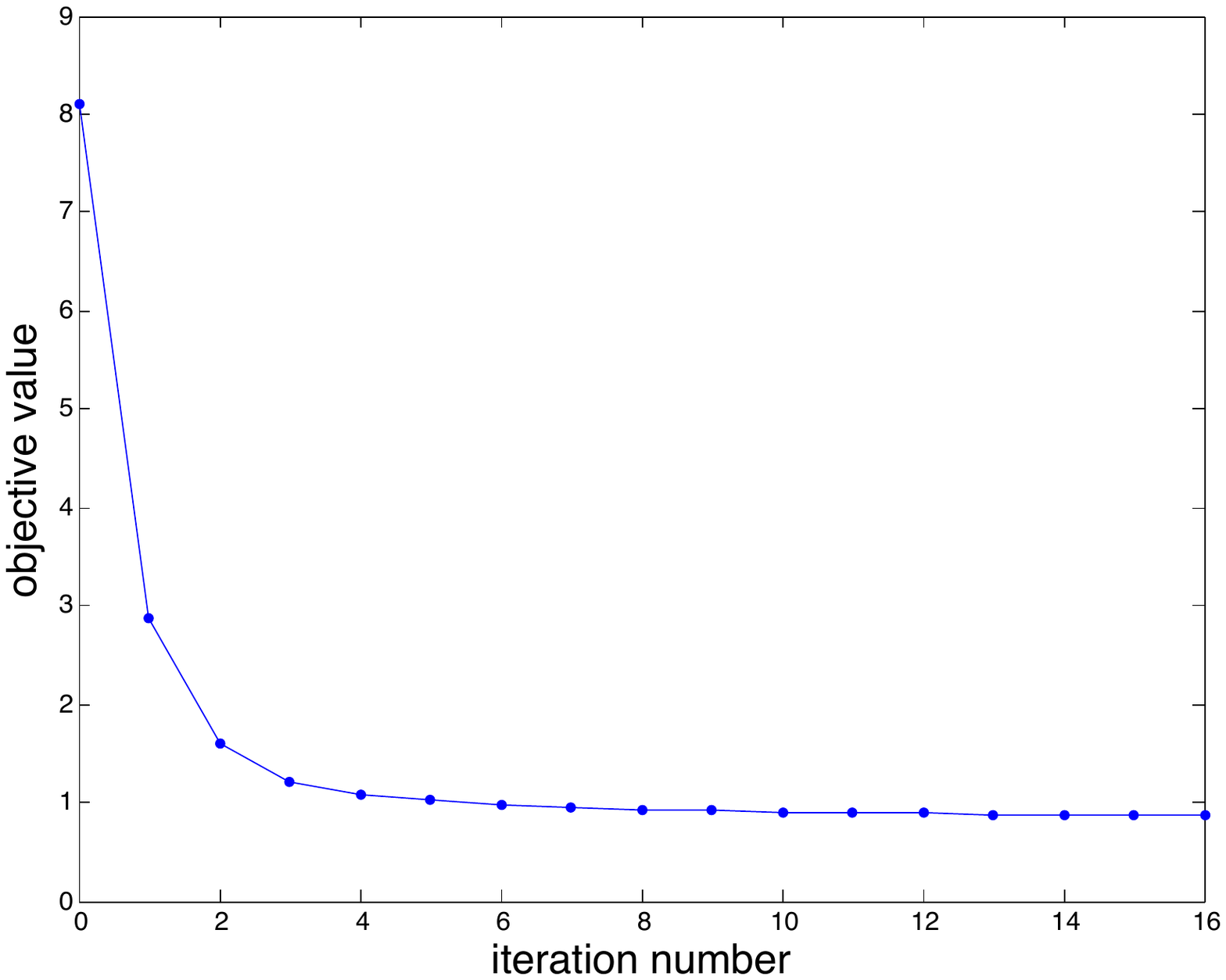}
\caption{\scriptsize{The performance of the MM algorithm.}}
\label{figure: MM speed}
\end{minipage}
\epc
\begin{minipage}{.45\textwidth}
\centering
\includegraphics[width=0.6\textwidth]{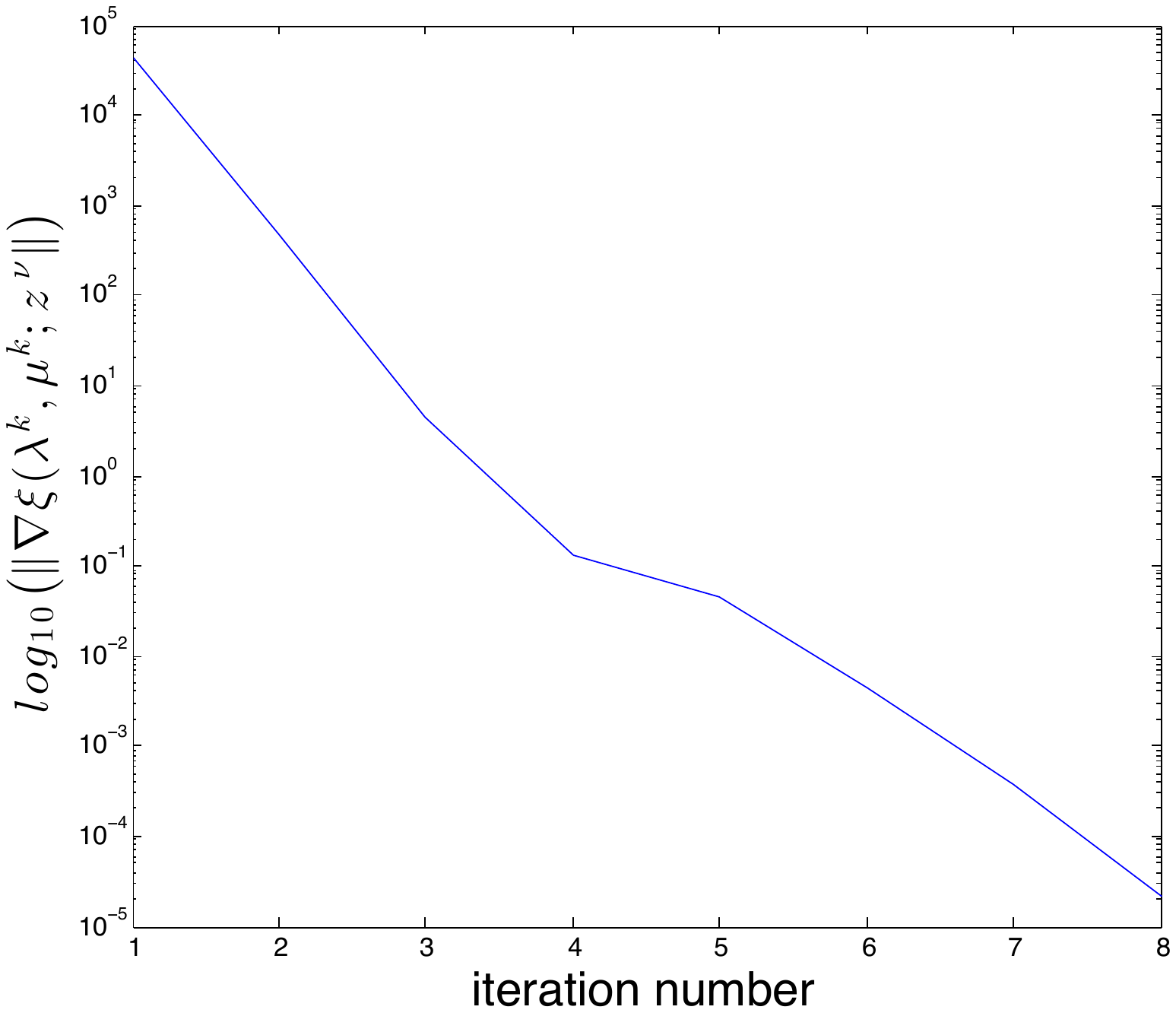}
\caption{\scriptsize{The performance of the SN algorithm.}}
\label{figure: Newton speed}
\end{minipage}
\end{center}
\end{figure}

To demonstrate the advantage of using the piecewise affine model over the classical linear regression model,
we compare the solution of the least-squares piecewise affine regression $\theta^{\,\rm PA}$ with the ordinary least-squares solution
$\theta^{\,\rm LS} \in  \displaystyle\operatornamewithlimits{\mbox{argmin}}_{\theta} \displaystyle\frac{1}{2N}\sum_{s=1}^N\|\,y^{s} - (x^{s},1)^T\theta\,\|^2$
in the following way. Let the full dataset of each instance obtained  from the UCI Repository be $T$, which is further divided  into a training
set $T^{\, \rm f}_{\rm training}$ and a test set $T^{\, \rm f}_{\rm test}$ for $f = 1, \ldots, 5$ by 5-fold cross-validation.
For each fold, we find the estimator $\theta^{\,{\rm PA},\rm f}$  based on the training set $T^{\, \rm f}_{\rm training}$  by choosing the solution
with the smallest objective value over $20$ runs of the MM+SN  algorithm (with each run corresponding to one initial point).
Such a $\theta^{\,\rm PA,f}$
is likely to be an (un-proven) global minimizer of the nonconvex least-squares piecewise affine regression program.
We compute the prediction errors of the two estimators respectively by
{\small \[{\rm E}_{{\rm PA}}\triangleq \displaystyle\sum_{\rm f = 1}^5\frac{1}{|T^{\rm f}_{\rm test}|}\sum_{s\in T_{\rm test}^{\rm f}} \left(y_{s} -
\psi(x^s;\theta^{\rm PA,f})\right)^2,
{\rm E}_{{\rm LS}}\triangleq\displaystyle\sum_{\rm f = 1}^5 \frac{1}{|T^{ \rm f}_{\rm test}|}\sum_{s\in T^{\rm f}_{\rm test}}
\left(y_{s} - (x^{s},1)^T\theta^{\,{\rm LS,f}}\right)^2.
\]}
We take $100$ simulations 
and report the average ratio
${\rm E}_{\rm PA}/{\rm E}_{\rm LS}$ for different choices of $(k_1, k_2)$ in Table \ref{table:ratio PA}.
It can be seen that all piecewise affine results are better than the linear regression results ($k_1 = k_2 = 1$);
moreover, with proper choices of $(k_1, k_2)$, the prediction error given by the piecewise affine regression
model is significantly smaller than that given by the linear regression model as highlighted by the best entry in each table.

As a final remark, we have observed in our simulation studies that even when the true model is linear, the continuous piecewise affine regression fits (for different choices of
$k_1$ and $k_2$) are very close to that of linear regression, i.e., the ratios of the prediction errors (in 5-fold cross validation as given in Table~\ref{table:ratio PA}) are all very close to 1. This provides further evidence that the continuous piecewise affine regression model could be a worthwhile and useful extension of linear regression. 

\begin{table}[h]
\centering
\begin{footnotesize}
\begin{tabular}{| c | c | c | c | c | c | c | c | c | c | c |}
\hline
\multicolumn{5}{|c|}{} && \multicolumn{5}{c|}{}\\[-0.8em]
\multicolumn{5}{|c|}{ banknote authentication} & &\multicolumn{5}{c|}{ concrete compressive strength}\\[0.2em]
 \cline{1-5}\cline{7-11}
 \backslashbox{$\quad k_2$}{$k_1$}  &$1$ & $2$ & $3$ & $4$  & &
 \backslashbox{$\quad k_2$}{$k_1$}  &$1$ & $2$ & $3$ & $4$
  \\
 \cline{1-5}\cline{7-11}
 $1$ & $1.00$ &$0.74$  & $0.68$ & $0.67$ & & $1$ & $1.00$ & $0.74$ & $0.47$ & {\color{blue} $0.38$} \\
 \cline{1-5}\cline{7-11}
 $2$ & $0.84$ &  $0.73$ & $0.65$ & {\color{blue}$0.63$}  & & $2$ & $0.85$ &  $0.46$ & $0.39$ & $0.39$  \\
 \cline{1-5}\cline{7-11}
 $3$ & $0.81$ & $0.72$ & $0.74$ & $0.74$ & &$3$ & $0.82$ & $0.65$ & $0.62$ & $0.61$ \\
 \cline{1-5}\cline{7-11}
 $4$ & $0.85$ & $0.71$ & $0.69$ & $0.74$ & &$4$ & $0.77$  & $0.54$ & $0.57$ & $0.60$  \\
 \cline{1-5}\cline{7-11}
 \multicolumn{5}{|c|}{} && \multicolumn{5}{c|}{}\\[-0.8em]
 \multicolumn{5}{|c|}{auto MPG} & &\multicolumn{5}{c|}{airfoil self-noise}\\[0.2em]
 \cline{1-5}\cline{7-11}
\backslashbox{$\quad k_2$}{$k_1$} & $1$ & $2$ & $3$ & $4$  && \backslashbox{$\quad k_2$}{$k_1$} & $1$ & $2$ & $3$ & $4$  \\
 \cline{1-5}\cline{7-11}
 $1$ & $1.00$ &  $0.77$   & {\color{blue} $0.72$} & $0.77$  && $1$ & $1.00$  & $0.83$ &$0.77$  & $0.74$\\
 \cline{1-5}\cline{7-11}
 $2$ & $0.97$  & $0.74$ & $0.79$ & $0.73$  &&  $2$ & $0.76$ & $0.72$ & $0.72$ & $0.59$  \\
 \cline{1-5}\cline{7-11}
 $3$ & $0.83$ &  $0.78$ & $0.76$ & $0.76$  && $3$ & $0.68$ & $0.68$ & $0.67$ & $0.57$  \\
 \cline{1-5}\cline{7-11}
 $4$ &  $0.85$ & $0.84$ & $0.73$ & $0.73$  &&  $4$ &$0.62$ & $0.62$ & $0.62$ & {\color{blue} $0.425$} \\
\cline{1-5}\cline{7-11}
 \multicolumn{5}{|c|}{} && \multicolumn{5}{c|}{}\\[-0.8em]
  \multicolumn{5}{|c|}{Libras Movement} & &\multicolumn{5}{c|}{Communities and Crime}\\[0.2em]
  \cline{1-5}\cline{7-11}
\backslashbox{$\quad k_2$}{$k_1$} & $1$ & $2$ & $3$ & $4$ && \backslashbox{$\quad k_2$}{$k_1$} & $1$ & $2$ & $3$ & $4$ \\
\cline{1-5}\cline{7-11}
 $1$ & $1.00$ & $0.93$ & $0.82$ & $0.80$ &&  $1$ & $1.00$ & $0.88$ & $0.85$ & $0.83$ \\
\cline{1-5}\cline{7-11}
 $2$ &  $0.83$ & $0.86$ & $0.77$ & $0.77$  &&  $2$ &  $0.862$ & $0.82$ & $0.84$ & $0.84$ \\
\cline{1-5}\cline{7-11}
 $3$ & $0.78$ & $0.72$ & {\color{blue} $0.72$} & $0.76$ &&  $3$ & $0.82$ & {\color{blue} $0.83$} & $0.83$ & $0.83$\\
\cline{1-5}\cline{7-11}
  $4$ & $0.90$ & $0.73$ & $0.78$ & $0.80$ && $4$ & $0.87$ & $0.84$ & $0.86$ & $0.87$\\
 \hline
 \end{tabular}
  \end{footnotesize}
\caption{\small Ratio of the prediction error for different choices of $(k_1,k_2)$.}
\label{table:ratio PA}
 \end{table}

In conclusion, we have demonstrated the practical worthiness of the LS piecewise affine regression model
formulated as a nonconvex nondifferentiable program and solved by the nonmonotone MM+SN method.  Further
applications of this method to other statistical estimation problems will be reported separately.


\end{document}